\numberwithin{equation}{section} 
 \newtheorem{theorem}{Theorem}[section] 
\newtheorem{proposition}[theorem]{Proposition} 
\newtheorem{corollary}[theorem]{Corollary} 
\newtheorem{lemma}[theorem]{Lemma} 
\newtheorem{remark}[theorem]{Remark} 
\def\c{{\mathbb C}} 
\def\r{{\mathbb R}} 
\begin{document} 
 
\title [Kostant--Kirillov form and nilpotent orbits]{On the exactness of 
Kostant--Kirillov form and the second cohomology of nilpotent orbits} 
 
\author[I. Biswas]{Indranil Biswas} 
 
\address{School of Mathematics, Tata Institute of 
Fundamental Research, Homi Bhabha Road, Mumbai 400005, India} 
 
\email{indranil@math.tifr.res.in} 
 
\author[P. Chatterjee]{Pralay Chatterjee} 
 
\address{The Institute of Mathematical Sciences, C.I.T. Campus, 
Taramani, Chennai 600113, India}

\email{pralay@imsc.res.in} 
 
\subjclass[2000]{57T15, 17B20} 
 
\keywords{Kostant-Kirillov form, second cohomology, nilpotent orbit, 
homogeneous spaces} 
 
\begin{abstract} 
We give a criterion for the Kostant-Kirillov form on an adjoint orbit in a real 
semisimple Lie group to be exact. We explicitly compute the second cohomology 
of all the nilpotent adjoint orbits in every complex simple Lie algebras.
\end{abstract}
 
\maketitle 

\section{Introduction} 
We recall a theorem of Arnold (Theorem 1 in page 100 of \cite{Ar}):

\begin{theorem}[\cite{Ar}]\label{th.ar}
Let $\mathcal O$ be the orbit, for the adjoint action of
${\rm SL}(n, {\mathbb C})$, of a $n\times n$ complex matrix with distinct
eigenvalues, and let $\Omega$ be the Kostant--Kirillov holomorphic
symplectic form on $\mathcal O$. Then the real symplectic form ${\rm Im}\,
\Omega$ is exact if and only if all the eigenvalues of a (hence any)
matrix in $\mathcal O$ are purely imaginary; if ${\rm Im}\,
\Omega$ is exact, then $({\mathcal O},\, {\rm Im}\,
\Omega)$ is sympectomorphic to the total space of the cotangent bundle
of the variety of complete flags in ${\mathbb C}^n$ equipped with the
Liouville symplectic form.
\end{theorem}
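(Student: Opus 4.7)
The plan is to study the compact ``real orbit'' $C := \mathrm{SU}(n)\cdot x_{0}\subset\mathcal O$, where $x_{0} = \mathrm{diag}(\lambda_{1},\dots,\lambda_{n})$ is the diagonal representative. Applied to $\mathcal O\cong\mathrm{SL}(n,\mathbb C)/T_{\mathbb C}$, the polar decomposition $\mathrm{SL}(n,\mathbb C) = \mathrm{SU}(n)\cdot\exp(i\,\mathfrak{su}(n))$ shows that $\mathcal O$ deformation retracts onto $C$; since $x_{0}$ has distinct eigenvalues, its stabiliser in $\mathrm{SU}(n)$ is the maximal torus $T$, so $C \cong \mathrm{SU}(n)/T$ is the complete flag variety and $H^{2}(\mathcal O,\mathbb R) \cong H^{2}(C,\mathbb R)$. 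Thus exactness of $\mathrm{Im}\,\Omega$ reduces to the vanishing of the cohomology class $[\mathrm{Im}\,\Omega|_{C}]\in H^{2}(C,\mathbb R)$.

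To compute this restriction I would decompose $x_{0} = A + iB$ with $A,B\in\mathfrak{su}(n)$; explicitly $A = i\,\mathrm{diag}(\mathrm{Im}\,\lambda_{j})$ and $iB = \mathrm{diag}(\mathrm{Re}\,\lambda_{j})$, so $B = 0$ if and only if every $\lambda_{j}$ is purely imaginary. Using $\Omega_{z}([u,z],[v,z]) = \mathrm{tr}(z[u,v])$ together with the elementary fact that $\mathrm{tr}(SS')\in\mathbb R$ for $S,S'\in\mathfrak{su}(n)$, one finds
\[
\mathrm{Im}\,\Omega_{x_{0}}\bigl([u,x_{0}],[v,x_{0}]\bigr) \;=\; \mathrm{tr}\bigl(B[u,v]\bigr),\qquad u,v\in\mathfrak{su}(n).
\]
By $\mathrm{SU}(n)$-equivariance this identifies $\mathrm{Im}\,\Omega|_{C}$ with the pullback of the Kirillov (Kähler) form on the $\mathrm{SU}(n)$-orbit of $B$ along $k x_{0} k^{-1}\mapsto k B k^{-1}$. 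When $B = 0$ the form vanishes on $C$, so $\mathrm{Im}\,\Omega$ is exact. When $B \ne 0$, the map $C \to \mathrm{SU}(n)\cdot B$ is a fibration of flag manifolds for which pullback in degree two is injective on cohomology, hence the non-zero Kähler class pulls back to a non-zero class in $H^{2}(C,\mathbb R)$ and $\mathrm{Im}\,\Omega$ is not exact.

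For the symplectomorphism, assume all eigenvalues are purely imaginary. Then $x_{0}\in\mathfrak{su}(n)$ is regular and $C$ is a compact Lagrangian submanifold of $(\mathcal O,\mathrm{Im}\,\Omega)$ (isotropic by the previous computation and of half the real dimension). Setting $V := \mathfrak{t}^{\perp}\cap\mathfrak{su}(n)$, the map $Z\mapsto[Z,x_{0}]$ gives an isomorphism $V \cong T_{x_{0}}C$ (by regularity of $x_{0}$), and the trace form identifies $T^{*}C \cong \mathrm{SU}(n)\times_{T}V$. I would then build the $\mathrm{SU}(n)$-equivariant map
\[
\Phi\colon\mathrm{SU}(n)\times_{T}V\longrightarrow\mathcal O,\qquad [k,Z]\longmapsto\mathrm{Ad}\bigl(k\exp(iZ)\bigr)\,x_{0},
\]
and verify it is a diffeomorphism: well-definedness uses $\mathrm{Ad}(T)x_{0} = x_{0}$; surjectivity comes from writing any $z \in \mathcal O$ as $\mathrm{Ad}(g)x_{0}$ and applying polar decomposition to $g$; injectivity follows from the uniqueness of polar decomposition modulo $T_{\mathbb C}$.

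The main obstacle is verifying that $\Phi$ pulls $\mathrm{Im}\,\Omega$ back to the Liouville form $\omega_{\mathrm{Liou}}$ on $T^{*}C$. The $\mathrm{SU}(n)$-equivariant Weinstein neighbourhood theorem, applied to the compact Lagrangian $C\subset\mathcal O$, already supplies a symplectomorphism between a neighbourhood of $C$ and a neighbourhood of the zero section in $T^{*}C$. To promote this into a global statement for $\Phi$, my plan is to observe that both $\Phi^{*}\mathrm{Im}\,\Omega$ and $\omega_{\mathrm{Liou}}$ are exact $\mathrm{SU}(n)$-invariant symplectic forms on $T^{*}C$ that agree on the zero section, then apply an equivariant Moser/homotopy argument along the contractible fibres of $T^{*}C \to C$: write the difference as $d\beta$ for an $\mathrm{SU}(n)$-invariant $1$-form $\beta$ vanishing on the zero section, and integrate the time-dependent vector field solving $\iota_{X_{t}}\omega_{t} = -\beta$ along $\omega_{t} = \omega_{\mathrm{Liou}} + t\,d\beta$. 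Controlling the symplecticity of the resulting interpolation on the non-compact fibres is what I anticipate to be the technical heart of the argument.
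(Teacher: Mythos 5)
The paper does not actually prove this theorem: it is quoted from \cite{Ar} as motivation, and the closest the paper comes is Proposition \ref{complex-iff-orbit-2} (via Theorem \ref{2nd-cohom-r} and Proposition \ref{iff-orbit-2}), which establishes only the exactness criterion, for arbitrary adjoint orbits, by retracting to $M/K$, producing an invariant primitive via the Whitehead lemma, and testing it on ${\mathfrak z}({\mathfrak k})$. Your argument for the exactness criterion is correct and takes a genuinely different, more hands-on route: after the same Mostow-type retraction onto $C=\mathrm{SU}(n)\cdot x_0\cong \mathrm{SU}(n)/T$, you identify $\mathrm{Im}\,\Omega|_C$ with the pullback of the Kirillov form on the $\mathrm{SU}(n)$-orbit of $B$ and invoke injectivity of $H^2$ of the partial flag manifold $\mathrm{SU}(n)/Z_{\mathrm{SU}(n)}(B)$ in $H^2(\mathrm{SU}(n)/T)$ (valid since the fibre $Z_{\mathrm{SU}(n)}(B)/T$ is simply connected, so the relevant $E_2^{0,1}$ vanishes). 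The computation $\mathrm{Im}\,\Omega_{x_0}([u,x_0],[v,x_0])=\mathrm{tr}(B[u,v])$ for $u,v\in\mathfrak{su}(n)$ is right, and the nonexactness when $B\neq 0$ follows because a symplectic form on a compact orbit cannot be exact. What this buys is concreteness and the avoidance of Lie algebra cohomology; what it costs is generality, since it leans on the regular semisimple hypothesis, whereas the paper's criterion needs neither semisimplicity nor regularity of $X$.

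The genuine gap is in the symplectomorphism statement. The map $\Phi$ is indeed a diffeomorphism (this is the Mostow decomposition $G=K\exp(iV)T_{\mathbb C}$, not merely uniqueness of the polar decomposition, but it is available), and the zero section is Lagrangian. However, the proposed equivariant Moser argument does not close as stated: on the noncompact total space $T^*C$ there is no reason for $\omega_t=\omega_{\mathrm{Liou}}+t\,d\beta$ to remain nondegenerate away from the zero section, and even where it does, the time-dependent vector field $X_t$ need not be complete, so the time-one flow realizing the symplectomorphism may fail to exist; Weinstein's theorem only provides a germ near $C$ and there is no general mechanism for globalizing it. The standard way to finish (and essentially the route of \cite{Ar} and \cite{A-B-B}) is to bypass Moser altogether: exhibit an explicit $\mathrm{SU}(n)$-invariant primitive $\eta$ of $\mathrm{Im}\,\Omega$ on $\mathcal O$ and verify by direct computation that $\Phi^*\eta$ is the tautological $1$-form of $T^*C$ under your identification $T^*C\cong \mathrm{SU}(n)\times_T V$; then $\Phi^*\,\mathrm{Im}\,\Omega=d\,\Phi^*\eta=\omega_{\mathrm{Liou}}$ on the nose. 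Until that (or an equivalent global argument) is supplied, the last assertion of the theorem is not proved.
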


This theorem of Arnold inspired the work \cite{A-B-B} where it was generalized
to all semisimple adjoint orbits in a complex semisimple Lie algebra.

The following proposition
gives a criterion for the Kostant--Kirillov form on a general
adjoint orbit to be exact. This criterion (which
for $\text{Lie}({\rm SL}(n, {\mathbb C}))$ coincides with the criterion in 
Theorem \ref{th.ar}) can be regarded as a further generalization
of Arnold's result as it applies to all adjoint orbits, thus removing the  
``semisimplicity'' restriction in \cite{A-B-B}.

\begin{proposition}[Proposition \ref{iff-orbit-2} and Proposition
\ref{complex-iff-orbit-2}]\label{prop-i2}
Let $G$ be a real semisimple Lie group such that the
center of $G$ is a finite group. Further assume that a
maximal compact subgroup of $G$ is semisimple.
Let $X \in {\rm Lie}(G)$ be an arbitrary element, and let ${\mathcal O}_X
\subset {\rm Lie}(G)$ be the orbit of $X$ for the adjoint action
of $G$. Let $\omega\,
\in\, \Omega^2 ({\mathcal O}_X )$ be the Kostant--Kirillov symplectic
form on ${\mathcal O}_X$. Then $\omega$ is exact if and only if
all the eigenvalues of the linear
operator ${\rm ad}(X) \,:\, {\rm Lie}(G)\,
\longrightarrow\, {\rm Lie}(G)$ are real.

Let $G$ be a connected complex semisimple Lie group.
Let $X \, \in\, {\rm Lie}(G)$ be an arbitrary element. Let
$\omega$ be the Kostant--Kirillov holomorphic symplectic form
on the orbit of $X$ for the adjoint action of $G$ on ${\rm Lie}(G)$.
Then the form
${\rm Re} \,\omega$ (respectively, ${\rm Im} \,\omega$) is exact
if and only if all the eigenvalues of the linear
operator ${\rm ad} (X) \,:\, {\rm Lie}(G)
\,\longrightarrow\, {\rm Lie}(G)$ are real (respectively,
purely imaginary).
\end{proposition}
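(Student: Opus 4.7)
The plan is to reduce the exactness question to a Chevalley--Eilenberg cohomology calculation on a compact homogeneous space, and then to analyze the resulting scalar condition using the real Jordan decomposition of $X$. Set $H = Z_G(X)$, so ${\mathcal O}_X = G/H$. After replacing $X$ by an ${\rm Ad}(G)$-conjugate (which does not affect exactness of $\omega$), one can arrange that $H$ is stable under a Cartan involution $\theta$ with fixed-point subgroup $K$; set $L = K \cap H$. By Mostow's theorem $G/H$ has $K/L$ as a deformation retract, and since $K$ is compact the de~Rham cohomology $H^*(K/L,\,{\mathbb R})$ is computed by the Chevalley--Eilenberg complex $\bigl((\wedge^*({\mathfrak k}/{\mathfrak l})^*)^L,\, d_{CE}\bigr)$. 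At the identity coset, the pulled-back Kostant--Kirillov form is the alternating pairing $(Y_1, Y_2) \mapsto B(X, [Y_1, Y_2])$ on ${\mathfrak k}/{\mathfrak l}$, where $B$ is the Killing form of ${\mathfrak g}$.

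Using the hypothesis that ${\mathfrak k}$ is semisimple, so ${\mathfrak k} = [{\mathfrak k}, {\mathfrak k}]$, the identity $d_{CE}\beta(Y_1, Y_2) = -\beta([Y_1, Y_2])$ forces any candidate primitive $\beta \in ({\mathfrak k}/{\mathfrak l})^*$ of $\omega$ to equal $-B(X, \cdot\,)|_{\mathfrak k}$; this descends to ${\mathfrak k}/{\mathfrak l}$ exactly when $B(X, {\mathfrak l}) = 0$. Thus the proposition reduces to the equivalence
\[
B(X,\,{\mathfrak k} \cap {\mathfrak z}_{\mathfrak g}(X))\,=\,0 \quad \Longleftrightarrow \quad \text{all eigenvalues of } {\rm ad}(X) \text{ are real.}
\]

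For this equivalence I would use the real Jordan decomposition $X = X_e + X_h + X_n$ into commuting elliptic, hyperbolic and nilpotent parts (noting the eigenvalues of ${\rm ad}(X)$ are real iff $X_e = 0$). Any $Y \in {\mathfrak l}$ commutes with each of $X_e, X_h, X_n$. The vanishing $B(X_n, Y) = 0$ is immediate since ${\rm ad}(X_n){\rm ad}(Y)$ is a commuting product with ${\rm ad}(X_n)$ nilpotent, hence nilpotent and traceless. For $B(X_h, Y) = 0$, observe that ${\mathfrak z}_{\mathfrak g}(Y)$ is $\theta$-stable and reductive (since $Y \in {\mathfrak k}$), and the hyperbolic $X_h$ inside it can be conjugated to ${\mathfrak p} \cap {\mathfrak z}_{\mathfrak g}(Y)$ by some $z \in Z_G(Y)$; ${\rm Ad}$-invariance of $B$ together with $B({\mathfrak k}, {\mathfrak p}) = 0$ then gives $B(X_h, Y) = 0$. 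So $X_e = 0$ forces $B(X, {\mathfrak l}) = 0$. Conversely, after a further conjugation placing the Jordan pieces into ``normal form'' with $X_e \in {\mathfrak k}$, one has $X_e \in {\mathfrak l}$, and the two vanishings reduce $B(X, X_e)$ to $B(X_e, X_e) < 0$ whenever $X_e \neq 0$ by negative-definiteness of $B|_{\mathfrak k}$.

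For the complex case, view ${\mathfrak g}$ as a real Lie algebra ${\mathfrak g}_{\mathbb R}$; the Killing forms satisfy $B_{\mathbb R} = 2\,{\rm Re}\,B_{\mathbb C}$, and using ${\rm Im}(z) = {\rm Re}(-iz)$ this identifies ${\rm Re}\,\omega$ and ${\rm Im}\,\omega$ on ${\mathcal O}_X$ (up to the factor $\tfrac{1}{2}$) with the real Kostant--Kirillov forms associated respectively to $X$ and to $-iX$ in ${\mathfrak g}_{\mathbb R}$. Since a ${\mathbb C}$-linear operator has all real (respectively purely imaginary) eigenvalues iff its realification (respectively the realification of $-i$ times it) has only real eigenvalues, the real case applied to these two elements yields both conclusions. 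The main obstacle is the reduction step showing exactness is equivalent to the scalar condition $B(X, {\mathfrak l}) = 0$: it combines a topological input (de~Rham equals $K$-invariant cohomology on $K/L$), a Lie-algebra-cohomology input (${\mathfrak k} = [{\mathfrak k}, {\mathfrak k}]$ forces the candidate primitive to be unique), and the compatibility between the Cartan involution and the centralizer of $X$ needed to set up the deformation retract $G/H \simeq K/L$; once that is in hand, the Jordan-decomposition analysis is a short sequence of Killing-form orthogonality calculations.
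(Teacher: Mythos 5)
Your proposal is correct and follows essentially the same route as the paper: reduce exactness to the condition $B(X,\,\mathfrak{l})=0$ via the Mostow retract and the invariant (Chevalley--Eilenberg) complex with $\mathfrak{k}=[\mathfrak{k},\mathfrak{k}]$ forcing the unique primitive $-B(X,\cdot)$, then kill the nilpotent and hyperbolic contributions by the two Killing-form orthogonality claims and use negative-definiteness on the compact part, with the complex case handled by $B_{\mathbb R}=2\,\mathrm{Re}\,B$ and multiplication by $\sqrt{-1}$. The only (inessential) deviation is in the hyperbolic--compact orthogonality: you conjugate $X_h$ into $\mathfrak{p}$ inside $Z_G(Y)$ while keeping $Y$ fixed, whereas the paper conjugates a Cartan subalgebra containing $X_h+Y$ to a $\theta$-stable one and invokes uniqueness of the compact/hyperbolic decomposition --- the same underlying fact either way.
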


{}From Proposition \ref{prop-i2} it follows immediately that the
Kostant--Kirillov form on a nilpotent orbit of ${\rm Lie}(G)$ is
exact. 

As before, let $G$ be a real semisimple Lie group with finite center. Let
$X \in {\rm Lie}(G)$ such that  all eigenvalues of ${\rm ad}(X) \,:\, {\rm 
Lie}(G)\, \longrightarrow\, {\rm Lie}(G)$ are real. So the second cohomology 
class of the Kostant--Kirillov symplectic form on ${\mathcal O}_X$ is zero by 
the first part of Proposition \ref{prop-i2}. It is now natural to ask for a 
description of the full second cohomology group of such orbits. A very rich 
subclass of such orbits are the nilpotent ones, that is, orbits ${\mathcal O}_X$ 
with the property that all the eigenvalues of ${\rm ad}(X) \,:\, {\rm Lie}(G)\,
\longrightarrow\, {\rm Lie}(G)$ are zero. While various topological aspects of 
such orbits have drawn attention over the years (see, for example, Chapter 6 of
\cite{Co-M} and references therein), explicit computation of de Rham cohomology 
groups are not available in the literature to the best of our knowledge. Towards 
this, we completely determine the second de Rham cohomology of the nilpotent 
adjoint orbits in all the complex simple Lie algebras. (See Theorem \ref{a_n} 
(for ${\mathfrak s}{\mathfrak l}_{n}$), Theorem \ref{c_n} (for ${\mathfrak 
s}{\mathfrak p}_{2n}$), Theorem \ref{bd_n} (for ${\mathfrak s}{\mathfrak o}_n$),
Theorem \ref{gfe} (for ${\mathfrak g}_2$, ${\mathfrak f}_4$, ${\mathfrak e}_7$ 
and ${\mathfrak e}_8$) and Theorem \ref{e_6} (for ${\mathfrak e}_6$).) In  
particular, our computations yield the following vanishing result (this
theorem is the combination of Theorem \ref{c_n} and Theorem \ref{gfe}):

\begin{theorem}
Let $\mathfrak g$ be one of the following complex simple Lie
algebras: ${\mathfrak s}{\mathfrak p}_{2n}$, ${\mathfrak g}_2$,
${\mathfrak f}_4$, ${\mathfrak e}_7$ and ${\mathfrak e}_8$. Let
$G$ be a connected Lie group with Lie algebra $\mathfrak g$. The orbit of a
nilpotent element $X \,\in\, {\mathfrak g}$ for the adjoint action of
$G$ on $\mathfrak g$ will be denoted by ${\mathcal O}_X$. Then
$$
H^2 ({\mathcal O}_X, \, \r)\,=\,0\, .
$$
\end{theorem}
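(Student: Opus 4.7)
My plan is to translate the computation of $H^2(\mathcal{O}_X, \mathbb{R})$ into a statement about the reductive part of the centralizer of $X$, and then verify that statement case by case. As a first step, I would invoke the Jacobson--Morozov theorem to extend $X$ to an $\mathfrak{sl}_2$-triple $(X, H, Y)$ and form the Levi decomposition
\[
Z_G(X) = M \ltimes U, \qquad M := Z_G(X, H, Y),
\]
where $M$ is a (possibly disconnected) complex reductive subgroup and $U = \exp(\mathfrak{u})$ is unipotent, hence contractible. Since the adjoint orbit $\mathcal{O}_X \subset \mathfrak{g}$ depends only on $\mathfrak{g}$, I may replace $G$ by the simply connected complex group with Lie algebra $\mathfrak{g}$; then the projection $G/M \twoheadrightarrow G/Z_G(X) = \mathcal{O}_X$ has contractible fibres $U$ and is a homotopy equivalence.

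Next I would feed this into the long exact sequence of the fibration $M \hookrightarrow G \twoheadrightarrow G/M$. Using $\pi_1(G) = \pi_2(G) = 0$, one reads off $\pi_2(\mathcal{O}_X) \cong \pi_1(M^\circ)$ and $\pi_1(\mathcal{O}_X) \cong \pi_0(M)$, with the standard $\pi_1$-action on $\pi_2$ identified (via the boundary map) with the conjugation action of $\pi_0(M)$ on $\pi_1(M^\circ)$. Since $\pi_0(M)$ is finite, a transfer/Hurewicz argument applied to the finite cover $G/M^\circ \to G/M$ yields
\[
H^2(\mathcal{O}_X, \mathbb{R}) \;\cong\; \mathrm{Hom}\bigl(\pi_1(M^\circ), \mathbb{R}\bigr)^{\pi_0(M)},
\]
which is exactly the $\pi_0(M)$-invariant part of (the real dual of) the central torus of the connected reductive group $M^\circ$. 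Thus the vanishing of $H^2(\mathcal{O}_X, \mathbb{R})$ is equivalent to the absence of a $\pi_0(M)$-fixed central $\mathbb{C}^\ast$ in $M^\circ$.

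I would then dispatch the five Lie algebras. For $\mathfrak{g}_2, \mathfrak{f}_4, \mathfrak{e}_7, \mathfrak{e}_8$ the Bala--Carter classification (see Collingwood--McGovern, Chapter~8) shows that $M^\circ$ is always semisimple, hence $\pi_1(M^\circ)$ is finite and the criterion is vacuous. For $\mathfrak{sp}_{2n}$, if the orbit is labelled by a partition $\lambda = [1^{r_1}\,2^{r_2}\,\cdots]$ of $2n$ (with $r_i$ even whenever $i$ is odd), the classical description gives
\[
M^\circ \;\simeq\; \prod_{i\text{ odd}} \mathrm{Sp}(r_i, \mathbb{C}) \;\times\; \prod_{i\text{ even}} \mathrm{SO}(r_i, \mathbb{C}), \qquad \pi_0(M) \cong \prod_{i\text{ even},\,r_i>0} \mathbb{Z}/2.
\]
The $\mathrm{Sp}(r_i)$-factors are simply connected, and the $\mathrm{SO}(r_i)$-factors with $r_i \neq 2$ have finite $\pi_1$; hence the only free part of $\pi_1(M^\circ)$ comes from $\mathrm{SO}(2)$ factors corresponding to $r_i = 2$ for some even $i$. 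For each such factor, the accompanying $\mathbb{Z}/2 \subset \pi_0(M)$ acts on $\mathrm{SO}(2)$ by inversion, and therefore by $-1$ on $\pi_1(\mathrm{SO}(2)) = \mathbb{Z}$; consequently the $\pi_0(M)$-invariants are torsion and contribute nothing.

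The main technical hurdle is the $\mathfrak{sp}_{2n}$ case: unlike the exceptional types, $M^\circ$ genuinely carries a central torus and the vanishing of $H^2(\mathcal{O}_X, \mathbb{R})$ hinges on the non-trivial action of $\pi_0(M)$ on $\pi_1(M^\circ)$. Making this rigorous requires exhibiting an explicit reflection in $\mathrm{O}(2) \subset Z_G(X)$ whose conjugation action inverts the circle, and verifying that no other free summands in $\pi_1(M^\circ)$ appear. The exceptional cases are conceptually easier but still demand a careful type-by-type inspection of the Bala--Carter tables.
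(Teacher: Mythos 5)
Your reduction of the problem to the reductive centralizer $M = Z_G(X,H,Y)$ is sound, and your route to the formula $H^2(\mathcal{O}_X,\mathbb{R}) \cong \mathrm{Hom}\bigl(\pi_1(M^\circ),\mathbb{R}\bigr)^{\pi_0(M)}$ --- via the homotopy exact sequence of $M \hookrightarrow G \twoheadrightarrow G/M$ for simply connected $G$, Hurewicz on the cover $G/M^\circ$, and transfer --- is a genuinely different and valid alternative to the paper's derivation, which instead retracts $G/Z_G(X)$ onto a compact quotient $M_c/K$ and computes with $K$-invariant forms and the Whitehead lemma to obtain $H^2 \cong [\mathfrak{z}(\mathfrak{k})^*]^{K/K^0}$. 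The two answers agree because $\mathrm{Hom}(\pi_1(M^\circ),\mathbb{R})$ is naturally the dual of the central toral part of $\mathrm{Lie}(M)$ as a $\pi_0(M)$-module. Your treatment of $\mathfrak{sp}_{2n}$ is also essentially the paper's: by Springer--Steinberg the only source of a central torus is an $\mathrm{SO}(2)$-factor attached to an even part of multiplicity $2$, and the reflection in the corresponding $\mathrm{O}(2)$-factor of $Z_G(X,H,Y)$ inverts it.

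The gap is in the exceptional cases. It is false that $M^\circ$ is always semisimple for $\mathfrak{e}_7$ and $\mathfrak{e}_8$ (it is true for $\mathfrak{g}_2$ and $\mathfrak{f}_4$). In $\mathfrak{e}_7$ there are six orbits whose reductive centralizer has Lie algebra of type $\mathfrak{a}_3\oplus\mathfrak{t}_1$, $\mathfrak{a}_2\oplus\mathfrak{t}_1$, $\mathfrak{a}_1\oplus\mathfrak{t}_1$ (twice), $\mathfrak{t}_2$ or $\mathfrak{t}_1$, and in $\mathfrak{e}_8$ there are seven with types $\mathfrak{b}_2\oplus\mathfrak{t}_1$, $\mathfrak{a}_2\oplus\mathfrak{t}_1$, $\mathfrak{a}_1\oplus\mathfrak{t}_1$ and $\mathfrak{t}_1$ (four times). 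For these your criterion is not vacuous: one must show that $\pi_0(M)\cong\mathbb{Z}/2\mathbb{Z}$ acts on the central $\mathfrak{t}_1$ or $\mathfrak{t}_2$ with no nonzero fixed vector. This is not automatic --- in $\mathfrak{e}_6$, where the analogous centralizers $\mathfrak{a}_2\oplus\mathfrak{t}_1$, $\mathfrak{t}_1$, etc.\ occur but with trivial component group, the paper finds $\dim H^2(\mathcal{O}_X,\mathbb{R})=1$ for nine orbits, so the vanishing genuinely hinges on this action. The paper establishes the nontriviality by bounding below the dimension of the fixed toral subalgebra of the generator $c$ of $\Gamma$ inside the semisimple part of $M^\circ$ (via the induced Dynkin diagram automorphism), comparing with the tabulated maximal dimension of a $\Gamma$-centralized toral subalgebra of $\mathrm{Lie}(M)$ from Alexeevski's tables, and giving a separate argument for the purely toral cases $\mathfrak{t}_1,\mathfrak{t}_2$ using the maximal diagonalizable subgroups of $\Gamma\ltimes T$. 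Without some such input your proof does not close; and contrary to your closing assessment, the $\mathfrak{sp}_{2n}$ case is the easy one while these exceptional cases are where the real work lies.
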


In the proof of the above results we require a suitable formulation 
of the second cohomology group of a large class of homogeneous spaces
and a convenient characterization of the exactness of closed two forms; 
see Theorem \ref{2nd-cohom-r} and Corollary \ref{exactness}. 
Although general theories of cohomology groups of (compact) homogeneous spaces 
are extensively studied in the past 
(see, for example, \cite{B}, \cite{C-E} and \cite{G-H-V}) we are 
unable to locate Theorem \ref{2nd-cohom-r} and Corollary \ref{exactness}
in the literature and hence proofs are included for the sake of completeness.
On the other hand, they may be of independent interest as the above results are 
established as a direct consequence of the well--known Whitehead lemma. 

The paper is organized as follows. In \S~2 we fix some standard 
notations and in \S~3 discusses the basic results on second 
cohomology groups of homogeneous spaces that are needed in proving the
main results. In \S~4 we prove results on 
the exactness of Kostant--Kirillov forms, and the final \S~5 is 
devoted to computation of the second cohomology groups of 
nilpotent orbits.

\section{Notation}\label{sec-notation}

In this section we fix some notation, which will be used throughout
this paper. A few specialized notation
are mentioned as and when they occur. 

The {\it center} of a group $G$ is denoted by $Z(G)$ while the {\it center} of a Lie algebra 
${\mathfrak g}$ is denoted by ${\mathfrak z}({\mathfrak g})$. 
Let now $G$ be a Lie group with Lie algebra $\mathfrak g$. The
connected component of $G$ containing the
identity element is denoted by $G^0$.
For a subgroup $H$ of $G$ and a subset 
$S$ of ${\mathfrak g}$, by $Z_{H} (S)$ we will denote the 
subgroup of $H$ that fixes $S$ pointwise. 
Similarly, for a Lie subalgebra ${\mathfrak h}\, \subset\,
\mathfrak g$, by  $Z_{\mathfrak h} (S)$ we will denote the
subalgebra of $\mathfrak h$ that commute with every element of $S$. 

Let $\Gamma$ be a group acting linearly on a vector space $V$. The subspace of fixed points, under the action of
$\Gamma$, is denoted by $V^\Gamma$. If $G$ is a Lie group with Lie
algebra $\mathfrak g$, then
it is immediate that the adjoint (respectively, the coadjoint)  
action of $G^0$ on ${\mathfrak z}(\mathfrak g)$ (respectively, on 
${\mathfrak z} ({\mathfrak k})^*$) is trivial;
in particular, one obtains a natural 
action of $G/G^0$ on ${\mathfrak z}(\mathfrak g)$ (respectively, on
${\mathfrak z} ({\mathfrak k})^*$). We denote by
$[{\mathfrak z} ({\mathfrak g})]^{G/G^0}$ (respectively,
$[{\mathfrak z} ({\mathfrak g})^*]^{G/G^0}$)
the space of fixed points of ${\mathfrak z} ({\mathfrak g})$ (respectively, of ${\mathfrak z} ({\mathfrak g})^*$)
under the action of $G/G^0$.

\section{The second cohomology of homogeneous spaces} 
 
We begin by recalling some well-known results that will be useful here. 
 
\begin{theorem}\label{whitehead} 
Let $k$ be either $\mathbb R$ or $\mathbb C$. 
Let $\mathfrak g$ be a semisimple Lie
algebra over $k$, and let $\rho$ be a
representation of $\mathfrak g$ in a finite dimensional vector space
$V$ over $k$. Then 
$$ 
H^1 ({\mathfrak g}, V) =0 ~\,~\,~ and ~\,~\,~ H^2 ({\mathfrak g}, V)
=0\, . 
$$ 
\end{theorem}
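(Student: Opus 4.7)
The plan is to reduce both vanishing statements to the case where $V$ is irreducible and argue separately depending on whether $V$ is trivial. Weyl's theorem on complete reducibility of finite-dimensional representations of a semisimple Lie algebra over $\mathbb{R}$ or $\mathbb{C}$, together with the additivity of $H^\bullet(\mathfrak{g},-)$ in the coefficient module, accomplishes this reduction.

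For $V$ irreducible and nontrivial, the key ingredient is the quadratic Casimir element $C \in U(\mathfrak{g})$ built from Killing-dual bases of $\mathfrak{g}$. Since $C$ is central in $U(\mathfrak{g})$, Schur's lemma forces $C$ to act on $V$ as a scalar $c_V$, and a standard trace/Killing form computation shows $c_V \neq 0$ precisely because $V$ is nontrivial. Writing $H^n(\mathfrak{g}, V) = \mathrm{Ext}^n_{U(\mathfrak{g})}(k, V)$ and invoking the general fact that a central element of $U(\mathfrak{g})$ acts on these Ext groups compatibly through either argument, one finds that $C$ acts both as multiplication by $c_V$ (via $V$) and as multiplication by the augmentation $\epsilon(C) = 0$ (via $k$). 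Hence $c_V \cdot H^n(\mathfrak{g}, V) = 0$ for all $n$, which forces $H^n(\mathfrak{g}, V) = 0$ in every degree, and in particular for $n = 1, 2$.

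If $V = k$ is the trivial module, semisimplicity of $\mathfrak{g}$ gives $\mathfrak{g} = [\mathfrak{g},\mathfrak{g}]$, and so
\[
H^1(\mathfrak{g}, k) \,=\, (\mathfrak{g}/[\mathfrak{g}, \mathfrak{g}])^* \,=\, 0 \, .
\]
For $H^2(\mathfrak{g}, k)$, a class is represented by a central extension $0 \to k \to E \to \mathfrak{g} \to 0$, and the task is to construct a Lie algebra splitting. Since $k$ lies in the center of $E$, the adjoint action of $E$ on itself descends to a $\mathfrak{g}$-module structure on $E$, so the sequence becomes a short exact sequence of finite-dimensional $\mathfrak{g}$-modules. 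Weyl's theorem supplies a $\mathfrak{g}$-equivariant linear section $\sigma \colon \mathfrak{g} \to E$, and $\mathfrak{g}$-equivariance of $\sigma$ unwinds to $\sigma([x, y]) = [\sigma(x), \sigma(y)]$, showing that $\sigma$ is automatically a Lie algebra homomorphism and that the extension splits.

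The most delicate point I anticipate is the compatibility of the two Casimir actions on $\mathrm{Ext}^n_{U(\mathfrak{g})}(k, V)$, which rests on a general functoriality statement for the action of central elements on derived functors; once this is granted, everything else reduces to a direct application of Weyl's theorem and routine linear algebra.
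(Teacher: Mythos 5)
The paper does not actually prove this statement: Theorem \ref{whitehead} is quoted as the classical Whitehead lemma, with a pointer to Varadarajan's book (Theorem 3.12, p.\ 220 of [Va]) for the proof. Your argument is the standard textbook proof and is essentially the one behind the cited reference: Weyl's theorem plus additivity reduces to irreducible $V$; for nontrivial irreducible $V$ the Casimir acts invertibly on $V$ but through the augmentation (i.e.\ by $0$) on the trivial module $k$, so it annihilates $\mathrm{Ext}^n_{U(\mathfrak g)}(k,V)$ in every degree; and the trivial-coefficient cases are handled by $\mathfrak g=[\mathfrak g,\mathfrak g]$ for $H^1$ and by splitting central extensions via a $\mathfrak g$-equivariant section for $H^2$. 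This is correct, and your observation that an equivariant linear section of a central extension is automatically a Lie algebra section is exactly the right way to close the $H^2(\mathfrak g,k)$ case. Two minor points worth tightening, neither of which is a genuine gap: (i) over $k=\mathbb R$, Schur's lemma only gives that $\mathrm{End}_{\mathfrak g}(V)$ is a division algebra, so the Casimir need not act by a scalar in $k$; but your argument only needs that it acts invertibly on $V$, which follows because $\ker(C|_V)$ is a $\mathfrak g$-submodule that cannot be all of $V$ since $\mathrm{tr}_V(C)\neq 0$ --- alternatively, complexify everything using $H^n(\mathfrak g,V)\otimes_{\mathbb R}\mathbb C\cong H^n(\mathfrak g\otimes_{\mathbb R}\mathbb C, V\otimes_{\mathbb R}\mathbb C)$. (ii) The nonvanishing $c_V\neq 0$ is most transparent if the Casimir is built from the trace form of the representation restricted to a complement of its kernel, which gives $\mathrm{tr}_V(C)=\dim(\mathfrak g/\ker\rho)>0$; with the Killing-form Casimir one instead needs the eigenvalue formula $(\lambda,\lambda+2\delta)>0$ for a nonzero highest weight $\lambda$. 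The compatibility of the two central-element actions on $\mathrm{Ext}$, which you flag as the delicate point, is indeed standard.
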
 
 
Theorem \ref{whitehead} is known as the Whitehead lemma; see
Theorem 3.12 in page 220 of \cite{V} for a proof.
 
The following is a minor variation of
a result of G. D. Mostow, (Theorem 3.1 in page 260 of \cite{M}),
on the homotopy type of a homogeneous space. 
 
\begin{theorem}\label{mostow} 
Let $G$ be a connected Lie group, and let $H\, \subset\, G$ be a closed
subgroup with finitely many 
connected components. Let $M$ be a maximal compact subgroup of $G$ such
that $M \bigcap H$ is a maximal compact subgroup 
of $H$. Then the image of the natural embedding 
$M/ (M \bigcap H)\, \hookrightarrow\, G/H$ is a deformation 
retraction of $G/H$. 
\end{theorem}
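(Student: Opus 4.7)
The plan is to build a smooth strong deformation retraction $F\colon G \times [0,1] \to G$ with $F_0 = \mathrm{id}_G$, $F_t|_M = \mathrm{id}_M$ for all $t$, and $F_1(G) = M$, and crucially satisfying the $H$-compatibility $F_t(gh) \in F_t(g) \cdot H$ for every $g \in G$, $h \in H$, $t \in [0,1]$. Any such $F$ descends to a strong deformation retraction of $G/H$ onto the image of $M/(M \cap H)$, which is exactly what the theorem asserts.

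The structural input is the Iwasawa--Malcev--Mostow decomposition of a connected Lie group: there is a closed submanifold $E \subset G$ diffeomorphic to some $\mathbb{R}^n$ such that the multiplication map $M \times E \to G$, $(m, e) \mapsto me$, is a diffeomorphism; moreover $E$ can be arranged to be of the form $\exp(\mathfrak{p})$ for an $\mathrm{Ad}(M)$-invariant linear complement $\mathfrak{p}$ of $\mathfrak{m}$ in $\mathfrak{g}$. Applied to $H^0$, and using that $(M \cap H)^0$ is a maximal compact subgroup of $H^0$ (this follows from the hypothesis that $M \cap H$ is maximal compact in $H$), the same theorem produces a closed submanifold $E_H \subset H^0$ with multiplication $(M \cap H)^0 \times E_H \to H^0$ a diffeomorphism. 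The decisive intermediate step is to choose the two decompositions \emph{compatibly}, so that $E_H \subset E$. Once that is in hand, set $F_t(m \cdot e) := m \cdot \phi_t(e)$, where $\phi_t$ is a smooth contraction of $E$ to the identity preserving $E_H$ at each time (the flow $\exp(X) \mapsto \exp((1-t)X)$ works when $E = \exp(\mathfrak{p})$ with $E_H = \exp(\mathfrak{p} \cap \mathfrak{h})$). The $H$-compatibility of $F_t$ is then immediate from the uniqueness of the $M \cdot E$ factorization together with $E_H \subset E$.

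The main obstacle is producing the compatibility $E_H \subset E$, which is exactly where the maximality hypothesis on $M \cap H$ is consumed. In the semisimple (or reductive) case this is transparent through a Cartan involution $\theta$ of $G$ fixing $M$: any reductive closed subgroup $H$ with $M \cap H$ maximal compact can be conjugated to be $\theta$-stable, and then $\mathfrak{h} = (\mathfrak{h} \cap \mathfrak{m}) \oplus (\mathfrak{h} \cap \mathfrak{p})$ yields $E_H = \exp(\mathfrak{h} \cap \mathfrak{p}) \subset \exp(\mathfrak{p}) = E$. For a general connected Lie group one follows Mostow's inductive argument from \cite{M}: use the Levi decomposition $G = R \cdot S$ (solvable radical and semisimple part), handle the semisimple piece via the Cartan involution above and the solvable piece via the exponential coordinates on the radical, and patch. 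The extension from the connected case $H = H^0$ to the case of $H$ with finitely many connected components is a formal consequence of the conjugacy of maximal compact subgroups together with the fact that the finite group $H/H^0$ acts on the picture through elements of $M$ (after a further conjugation if needed), so the retraction $F_t$ can be arranged to be invariant under right translation by representatives of $H/H^0$ and thus respects all of $H$, not just $H^0$.
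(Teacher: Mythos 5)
The paper does not prove this statement at all: it is quoted as a minor variation of Mostow's Theorem 3.1 in \cite{M}, with the passage from connected $H$ to $H$ having finitely many components justified by Hochschild's theorem on maximal compact subgroups of such groups (Theorem 3.1, p.~180 of \cite{H}). Your proposal is therefore not an alternative route but a reconstruction of Mostow's own argument, and its skeleton is the right one: descend to $G/H$ a strong deformation retraction of $G$ onto $M$ that intertwines right translation by $H$, built from compatibly chosen Euclidean slices for $G$ and $H^{0}$, and then absorb $H/H^{0}$ using the fact that a maximal compact subgroup of $H$ meets every component of $H$ (because $H/(M\cap H)$ is a cell) --- the same ingredient the paper extracts from \cite{H}.

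The one piece of your added detail that would fail if taken literally is the ``decisive intermediate step'' that the compatible slice can be realized as $E=\exp({\mathfrak p})$ for an ${\rm Ad}(M)$--invariant complement ${\mathfrak p}$ with $E_{H}=\exp({\mathfrak p}\cap{\mathfrak h})\subset E$. This is correct for $\theta$--stable reductive $H$, but it is false for general closed subgroups: take $G={\rm SL}_2({\mathbb R})$, $M={\rm SO}(2)$, ${\mathfrak p}$ the symmetric traceless matrices, and $H$ the unipotent upper--triangular subgroup. Then $M\cap H=\{e\}$ is maximal compact in $H\cong{\mathbb R}$, yet ${\mathfrak h}\cap{\mathfrak p}=0$ and $H\not\subset\exp({\mathfrak p})$, so no single ${\rm Ad}(M)$--invariant exponential slice contains $E_H$; one must instead take $E$ to be an ordered product of one--parameter subgroups (here $E=AN$ from the Iwasawa decomposition does contain $H$). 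This is exactly why Mostow's proof is an induction through the Levi decomposition rather than a one--line Cartan--decomposition argument, and your deferral to ``Mostow's inductive argument'' occurs precisely at the point where all the real work lives. So the proposal should be read as a correct outline of \cite{M} --- matching what the paper cites --- rather than as a self--contained proof; the concluding reduction of the disconnected case to the connected one is sound, provided the retraction is additionally arranged to commute with right translation by $M\cap H$, which your conjugation--equivariant contraction does supply.
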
 
 
In Theorem 3.1 (in page 260) of \cite{M} it is assumed that $H$ is connected.
But, in view of Theorem 3.1 (in page 180) of \cite{H}, 
the proof of Theorem 3.1 in \cite{M} goes through in the 
more general situation 
where $H$ is not connected but the number of connected components
is finite. 
 
\begin{theorem}\label{2nd-cohom-r} 
Let $G$ and $H$ be as in Theorem \ref{mostow}.
Assume that a (hence any) maximal compact subgroup of $G$ is semisimple.
Let $K$ be a maximal compact subgroup of $H$. 
Let ${\mathfrak k}$ be the Lie algebra of $K$. 
Then there is a natural isomorphism between the real vector spaces
$[{\mathfrak z} 
({\mathfrak k})^*]^{K/K^0}$ and $H^2(G/H, \, \r)$, where $K_0\, \subset\, K$
is the connected component containing the identity element. 
\end{theorem}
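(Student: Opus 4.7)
My plan is to compute in three stages: first reduce via Theorem \ref{mostow} to the compact homogeneous space $M/K$; then reduce further via the finite cover $M/K^{0} \to M/K$; and finally apply the Serre spectral sequence of the principal bundle $K^{0} \to M \to M/K^{0}$, invoking Whitehead's lemma to collapse it. Throughout I may assume $K = M \cap H$ by conjugating, as maximal compact subgroups of $H$ are pairwise conjugate.

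Step one: Theorem \ref{mostow} provides a deformation retraction $M/K \hookrightarrow G/H$, so $H^{2}(G/H, \mathbb{R}) \cong H^{2}(M/K, \mathbb{R})$. Step two: the finite group $K/K^{0}$ acts freely on $M/K^{0}$ with quotient $M/K$, and averaging over this action yields
$$H^{2}(M/K,\, \mathbb{R}) \,\cong\, \bigl[H^{2}(M/K^{0},\, \mathbb{R})\bigr]^{K/K^{0}}.$$
Thus it suffices to identify $H^{2}(M/K^{0}, \mathbb{R})$ with $\mathfrak{z}(\mathfrak{k})^{*}$ in a $K/K^{0}$-equivariant manner.

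Step three is the core computation. I would invoke the five-term exact sequence
$$0 \to H^{1}(M/K^{0}) \to H^{1}(M) \to H^{1}(K^{0})^{\pi_{1}(M/K^{0})} \to H^{2}(M/K^{0}) \to H^{2}(M)$$
of the Serre spectral sequence for $K^{0} \to M \to M/K^{0}$ (all coefficients in $\mathbb{R}$). Since $M$ is compact and semisimple, averaging identifies its de Rham cohomology with the Chevalley--Eilenberg cohomology of $\mathfrak{m}$, so Theorem \ref{whitehead} kills both $H^{1}(M, \mathbb{R})$ and $H^{2}(M, \mathbb{R})$. Moreover, in a principal bundle the monodromy action of $\pi_{1}$ of the base on the cohomology of the fiber is induced by inner automorphisms of the structure group and is therefore trivial. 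The sequence collapses to a transgression isomorphism $H^{2}(M/K^{0}, \mathbb{R}) \cong H^{1}(K^{0}, \mathbb{R})$, and for a compact connected Lie group the latter is canonically $\mathfrak{z}(\mathfrak{k})^{*}$.

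Assembling the three steps produces an abstract isomorphism; the final task, and the main obstacle, is to verify naturality with respect to $K/K^{0}$. The right $K$-action on $M/K^{0}$ descending to $K/K^{0}$ should correspond, via transgression, to the conjugation action on the fiber $K^{0}$ and hence, on $H^{1}(K^{0}, \mathbb{R}) = \mathfrak{z}(\mathfrak{k})^{*}$, to the coadjoint action of $K/K^{0}$. Tracing this equivariance carefully through the Serre spectral sequence and the deck transformations of the finite cover is the delicate point; without it one would obtain only an isomorphism of vector spaces, not the claimed identification with $[\mathfrak{z}(\mathfrak{k})^{*}]^{K/K^{0}}$.
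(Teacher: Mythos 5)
Your route is genuinely different from the paper's, and each of its three reductions is sound. The paper never leaves the Lie--algebra level: after the same Mostow reduction to $M/K$, it identifies $H^{*}(M/K,\mathbb{R})$ with the cohomology of the complex of $K$-invariant forms on $\mathfrak{m}$ annihilating $\mathfrak{k}$, applies Whitehead's lemma twice (once to get $H^{2}(\mathfrak{m},\mathbb{R})=0$, so every closed invariant $2$-form is $d\eta$ for a unique $\eta\in\Omega^{1}(\mathfrak{m})^{K}$, and once through the injectivity of $d$ on $1$-forms), and then reads off $H^{2}(M/K,\mathbb{R})\cong\Omega^{1}(\mathfrak{m})^{K}/\Omega^{1}(\mathfrak{m}/\mathfrak{k})^{K}\cong(\mathfrak{z}(\mathfrak{k})^{*})^{K}$ from the $K$-module decomposition $\mathfrak{m}=\mathfrak{z}(\mathfrak{k})\oplus[\mathfrak{k},\mathfrak{k}]\oplus\mathfrak{k}^{\perp}$. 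Because that computation works with $K$-invariant (not merely $K^{0}$-invariant) forms throughout, the $K/K^{0}$-invariance is built in and never has to be tracked separately; it also hands over the explicit exactness criterion of Corollary \ref{exactness} for free. Your factorization through the cover $M/K^{0}\to M/K$ followed by the Serre spectral sequence of $K^{0}\to M\to M/K^{0}$ is topologically cleaner: the transfer gives $H^{2}(M/K,\mathbb{R})\cong H^{2}(M/K^{0},\mathbb{R})^{K/K^{0}}$, Whitehead kills $H^{1}(M,\mathbb{R})$ and $H^{2}(M,\mathbb{R})$, the monodromy is trivial because the structure group $K^{0}$ of a principal bundle acts on the fibre by left translations, which are homotopic to the identity since $K^{0}$ is connected (this is the correct reason, rather than ``inner automorphisms'', but the conclusion stands), and the five-term sequence then makes the transgression $H^{1}(K^{0},\mathbb{R})\to H^{2}(M/K^{0},\mathbb{R})$ an isomorphism.

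The one genuine gap is the equivariance you flag at the end, and it does have to be closed: without it you obtain only $\dim H^{2}(G/H,\mathbb{R})=\dim H^{2}(M/K^{0},\mathbb{R})^{K/K^{0}}$ for an unidentified action, which is not the statement. Fortunately it is a short computation rather than a missing idea. The deck transformation of $M/K^{0}$ determined by $k\in K$ lifts to the right translation $R_{k}\colon m\mapsto mk$ of $M$, which carries the fibre $mK^{0}$ to the fibre $mkK^{0}$; identifying each fibre with $K^{0}$ via $x\mapsto mx$, the induced self-map of $K^{0}$ is $x\mapsto k^{-1}xk$. The transgression in the Serre spectral sequence is natural for such bundle maps, so the deck action on $H^{2}(M/K^{0},\mathbb{R})$ corresponds to the conjugation action of $K/K^{0}$ on $H^{1}(K^{0},\mathbb{R})\cong(\mathfrak{k}/[\mathfrak{k},\mathfrak{k}])^{*}\cong\mathfrak{z}(\mathfrak{k})^{*}$, i.e.\ to the coadjoint action (the inverse in $k^{-1}xk$ is immaterial for the fixed-point set). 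Taking invariants then yields $[\mathfrak{z}(\mathfrak{k})^{*}]^{K/K^{0}}$, and with that paragraph supplied your argument is complete.
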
 
 
\begin{proof} 
Let $M$ be a maximal compact subgroup of $G$ containing 
$K$. Then by Theorem \ref{mostow}, the image of the 
quotient space $M/ K$ under the natural embedding 
$M/ K\, \hookrightarrow\, G/H$ is a deformation 
retraction of $G/H$. In particular,
\begin{equation}\label{f0} 
H^i(G/H,\, \r) \,=\, H^i (M/K, \,\r) 
\end{equation} 
for all $i$. 
 
Since $G$ is connected, its maximal compact subgroup $M$ 
is also connected. Let ${\mathfrak m}$ denote the Lie algebra of $M$. 
We need to recall some standard notation. 
 
For $n\, \geq\, 0$, 
let $\Omega^n ( {\mathfrak m})$ denote the space of all
$n$-forms on the Lie algebra ${\mathfrak m}$. 
Let 
$$ 
d \,:\, \Omega^n ( {\mathfrak m}) \,\longrightarrow\, 
\Omega^{n+1} ({\mathfrak m}) 
$$ 
be defined by 
$$ 
w (X_1, \cdots, X_{n+1}) \, \longmapsto\, \sum_{i < j} (-1)^{i+j} 
w ([X_i, X_j], X_1, \cdots, \widehat{X_i}, \cdots , \widehat{X_j}, 
\cdots, X_{n+1})\, ,
$$ 
where $X_1, \cdots , X_{n+1} \in {\mathfrak m}$. An $n$-form $w \,\in\,
\Omega^n ( {\mathfrak m})$ is said to {\it annihilate} ${\mathfrak k}$ if 
$$w (X_1, \cdots, X_n)\,=\,0$$ whenever $X_i \in {\mathfrak k}$ for 
some $i$. The group $M$ has the adjoint action on the Lie
algebra ${\mathfrak m}\,=\, {\rm Lie}(M)$ which in turn induces an action of $M$ on
each $\Omega^n ( {\mathfrak m})$; this action of $M$ 
on $\Omega^n ( {\mathfrak m})$ has the following description: 
$$ 
({\rm Ad} (g)^* w) 
(X_1, \cdots, X_n) \,=\, ({\rm Ad}(g)X_1, \cdots, {\rm Ad}(g)X_n) 
$$ 
for all $w \in \Omega^n ( {\mathfrak m})$ and for all $X_1, \cdots ,X_n 
\in {\mathfrak m}$. Let 
$$\Omega^n ( {\mathfrak m}/ {\mathfrak k})^K\, \subset\, 
\Omega^n ( {\mathfrak m}) 
$$ 
be the subspace of $n$-forms that simultaneously
annihilate ${\mathfrak k}$ and ${\rm Ad}(g)^* (w) \,=\, w$ for all $g \in K$.
It is easy to see that $$d (\Omega^n ( {\mathfrak m}/ {\mathfrak k})^K) 
\,\subset \, 
\Omega^{n+1} ( {\mathfrak m}/ {\mathfrak k})^K\, ,$$ 
and consequently the pair $\{\Omega^* ( {\mathfrak m}/ {\mathfrak k})^K 
, d \} $ is a sub-complex of the complex 
$\{\Omega^* ( {\mathfrak m}), d \}$. 
 
As $M$ is compact and connected, from Theorem 30 (in page 310) of \cite{S} 
and the formula given in page 313 of \cite{S}, we conclude that there 
are natural isomorphisms
\begin{equation}\label{e1}
H^i ( G/H, \r)\,=\,
H^i ( M/K, \r) \, = \, \frac{ {\rm Ker} \,( d:\Omega^i ( 
{\mathfrak m}/ {\mathfrak k})^K 
\to \Omega^{i+1} ( {\mathfrak m}/ {\mathfrak k})^K)}{ d (\Omega^{i-1} ( {\mathfrak m}/ {\mathfrak k})^K)} 
\end{equation}
for all $i$; the first isomorphism is the one in \eqref{f0}.
 
Setting $i\,=\, 2$ in \eqref{e1}, we have
\begin{equation}\label{e2} 
H^2 (G/H, \r) \,=\, \frac{ {\rm Ker} \,( d:\Omega^2 ( {\mathfrak m}/ 
{\mathfrak k})^K 
\to \Omega^3 ( {\mathfrak m}/ {\mathfrak k})^K)}{ d (\Omega^1 ( 
{\mathfrak m}/ {\mathfrak k})^K)}\, . 
\end{equation} 
We will identify the numerator and the denominator in \eqref{e2}. 
 
Since $M$ is semisimple, its Lie algebra ${\mathfrak m}$ is also semisimple. 
Consequently, by Theorem \ref{whitehead}, we have $$H^2 ( {\mathfrak m}, \r) 
\,=\, 0\, ,$$ where 
$\r$ is the trivial representation of ${\mathfrak m}$. Thus, we have
\begin{equation}\label{e4} 
{\rm Ker} \,( d:\Omega^2 ( {\mathfrak m}/ {\mathfrak k})^K 
\to \Omega^3 ( {\mathfrak m}/ {\mathfrak k})^K)\, = \, 
\Omega^2 ( {\mathfrak m}/ {\mathfrak k})^K \cap d (\Omega^1 
( {\mathfrak m}))\, . 
\end{equation} 
 
The map
\begin{equation}\label{i-m}
d \,:\, \Omega^1 ( {\mathfrak m}) \,\longrightarrow\, 
\Omega^2 ( {\mathfrak m})
\end{equation}
is injective, because 
${\mathfrak m}$ is semisimple. 
Further, for any $g\, \in\,K$, the operators ${\rm Ad} (g)^*$ and $d$ 
on the space of forms commute. 
These two facts together imply that a form $\eta \,\in\, \Omega^1 ( 
{\mathfrak m})$ 
is invariant under the adjoint action of $K$ if and only if $d 
\eta$ is invariant under the adjoint action of $K$. Therefore, 
\begin{equation}\label{E2} 
\Omega^2 ( {\mathfrak m}/ {\mathfrak k})^K \cap d (\Omega^1 ( {\mathfrak m})) 
\,=\, \Omega^2 ( {\mathfrak m}/ {\mathfrak k})^K \cap d (\Omega^1 ( 
{\mathfrak m})^K)\, . 
\end{equation} 

We next claim that
\begin{equation}\label{e3}
d (\Omega^1 ( {\mathfrak m})^K)\, \subset\,
\Omega^2 ( {\mathfrak m}/ {\mathfrak k})^K\, .
\end{equation}
To prove this claim, first observe that if
$\eta \in \Omega^1 ( {\mathfrak m})$,
then $\eta$ is invariant under the adjoint action of $K^0$
(the connected component of $K$) if and only
if $\eta \circ {\rm ad}(X) \,=\, 0$
for all $X \in {\mathfrak k}$. This implies that $ d (\Omega^1 (
{\mathfrak m})^K) \subset
\Omega^2 ( {\mathfrak m}/ {\mathfrak k})$. Further, $d (\Omega^1 (
{\mathfrak m})^K) \subset
\Omega^2 ( {\mathfrak m})^K$. Combining these two,
the claim in \eqref{e3} follows immediately.

Combining \eqref{E2} and \eqref{e3} with \eqref{e2}, we have
$$ 
H^2 (G/H, \,\r)\,=\,\frac{ {\rm Ker} \,( d:\Omega^2 ( {\mathfrak m}/ 
{\mathfrak k})^K 
\to \Omega^3 ( {\mathfrak m}/ {\mathfrak k})^K)}{ d (\Omega^1 ( {\mathfrak m}/ {\mathfrak k})^K)} 
\,=\, \frac{d (\Omega^1 ( {\mathfrak m})^K)}{d (\Omega^1 ( {\mathfrak m}/ 
{\mathfrak k})^K)} \, . 
$$ 
 
Now the injectivity of $d \,:\, \Omega^1 ( {\mathfrak m})\, 
\longrightarrow\, \Omega^2 ( {\mathfrak m})$ implies that
\begin{equation}\label{e7} 
H^2(G/H, \, \r) \,=\, \frac{d (\Omega^1 ( {\mathfrak 
m})^K)}{d (\Omega^1 ( {\mathfrak m}/ {\mathfrak k})^K)} \,=\, 
\frac{\Omega^1 ( {\mathfrak m})^K}{\Omega^1 ( {\mathfrak m}/ {\mathfrak 
k})^K}\, . 
\end{equation} 
 
Since $K$ is compact, we get an ${\rm Ad}(K)$--invariant complement of 
the subspace ${\mathfrak k}\, \subset\, {\mathfrak m}$; this complement 
will be denoted by 
${\mathfrak k}^{\perp}$. 
Since ${\mathfrak k}$ is reductive (because $K$ is compact), 
it has the direct sum decomposition 
\begin{equation}\label{z}
{\mathfrak k} \,=\, {\mathfrak z} ({\mathfrak k}) + [{\mathfrak k}, 
{\mathfrak k}]\, ; 
\end{equation}
note that $[{\mathfrak k}\, , {\mathfrak k}]$ 
is a semisimple subalgebra of ${\mathfrak k}$. Thus we have a 
direct sum decomposition of the $K$--module 
${\mathfrak m}$ 
\begin{equation}\label{icd}
{\mathfrak m} \,= \, {\mathfrak z} ({\mathfrak k}) 
\oplus [{\mathfrak k}, {\mathfrak k}] 
\oplus {\mathfrak k}^{\perp}\, .
\end{equation}

Let 
\begin{equation}\label{ic-si}
\sigma \,:\, \Omega^1 ( {\mathfrak m})\,=\, {\mathfrak m}^* \, 
\longrightarrow\, {\mathfrak z} 
({\mathfrak k})^* \oplus 
[{\mathfrak k}, {\mathfrak k}]^* \oplus ({\mathfrak k}^{\perp})^*
\end{equation}
be the isomorphism defined by 
$f\, \longmapsto\, ( f\vert_{{\mathfrak z} ({\mathfrak k})}, f\vert_{[ 
{\mathfrak k}, {\mathfrak k}]}, f\vert_{{\mathfrak k}^{\perp}})$. 
As each of the subspaces of ${\mathfrak m}$ in \eqref{icd} is 
${\rm Ad}(K)$--invariant, the isomorphism $\sigma$ in
\eqref{ic-si} induces an isomorphism 
\begin{equation}\label{e6} 
\Omega^1 ( {\mathfrak m})^K \,\stackrel{\sim}{\longrightarrow}\, 
({\mathfrak z} ({\mathfrak k})^*)^K \oplus 
([{\mathfrak k}, {\mathfrak k}]^*)^K \oplus (({\mathfrak k}^{\perp})^*)^K 
\, . 
\end{equation} 
 
We claim that 
\begin{equation}\label{e5} 
([{\mathfrak k}, {\mathfrak k}]^*)^K \,= \,0\, . 
\end{equation} 

To prove \eqref{e5}, take any $\mu \,\in\, 
([{\mathfrak k}, {\mathfrak k}]^*)^K$. Then $\mu \circ {\rm Ad (g)} (X) 
\,= \,\mu (X)$ 
for all $X \,\in\, [{\mathfrak k}, {\mathfrak k}]$ and $g \,\in\, K$. 
By differentiating, one has that $\mu ({\rm ad}(Y)(X)) \,= \,0$ for 
all $X \,\in \,[{\mathfrak k}, {\mathfrak k}]$ and $Y \in 
{\mathfrak k}$. 
Thus $\mu ([{\mathfrak k},[{\mathfrak k}, {\mathfrak k}]]) \,=\, 0$. 
But, as $[{\mathfrak k}, {\mathfrak k}]$ is semisimple, we have 
$$[{\mathfrak k},[{\mathfrak k}, {\mathfrak k}]] \,=\, 
[{\mathfrak z}({\mathfrak k})+ [{\mathfrak k}, {\mathfrak k}] ,[ 
{\mathfrak k}, {\mathfrak k}]] 
\,= \, [ [{\mathfrak k}, {\mathfrak k}] ,[{\mathfrak k}, {\mathfrak 
k}]] = [{\mathfrak k}, {\mathfrak k}]\, .$$ 
Therefore, $\mu ([{\mathfrak k}, {\mathfrak k}]) \,=\, 0$. This proves 
the claim in \eqref{e5}. 
 
Combining \eqref{e6} and \eqref{e5}, the homomorphism 
$$ 
\widetilde{\sigma} \,:\, \Omega^1 ( {\mathfrak m})^K \,\longrightarrow 
\, ({\mathfrak z} ({\mathfrak k})^*)^K\oplus (({\mathfrak k}^{\perp})^*)^K 
$$ 
defined by $f\, \longmapsto\, 
( f\vert_{{\mathfrak z} ({\mathfrak k})}, f\vert_{{\mathfrak 
k}^{\perp}})$ is an isomorphism. 
 
Note that $f ( {\mathfrak k}) \,= \,0$ for every 
$f\, \in \,\Omega^1 ({\mathfrak m}/ {\mathfrak k})^K$; in 
particular, $f ({\mathfrak z} ({\mathfrak k})) \,=\, 0$. Thus 
$\widetilde{\sigma} ( \Omega^1 ( {\mathfrak m}/ {\mathfrak k})^K) = 
\{0\} \oplus (({\mathfrak k}^{\perp})^*)^K$. 
This in turn implies that 
$$ 
\frac{\Omega^1 ( {\mathfrak m})^K}{\Omega^1 ( {\mathfrak m}/ {\mathfrak k})^K} 
\,=\, \frac {({\mathfrak z} ({\mathfrak k})^*)^K 
\oplus (({\mathfrak k}^{\perp})^*)^K}{ \{0\} \oplus (({\mathfrak k}^{\perp})^*)^K} 
\,=\, ({\mathfrak z} ({\mathfrak k})^*)^K \, . 
$$ 
Hence from \eqref{e7}, we have
\begin{equation}\label{cii}
H^2(G/H,\, \r) \, =\, ({\mathfrak z} ({\mathfrak 
k})^*)^K\, . 
\end{equation}
 
As the adjoint action of $K^0$ on ${\mathfrak z} ({\mathfrak k})$ is 
trivial, it follows that $({\mathfrak z} ({\mathfrak k})^*)^K \,=\, 
({\mathfrak z} ({\mathfrak k})^*)^{K/K^0}$. In view of \eqref{cii},
this completes the proof of the theorem. 
\end{proof} 
 
\begin{corollary}\label{cor-1} 
The set--up is same as in Theorem \ref{2nd-cohom-r}. 
\begin{enumerate} 
\item If $K^0$ is semisimple, then $ H^2 (G/H, \,\r) \,= \,0$. 
 
\item If $H$ is connected, then $H^2
(G/H,
\, \r) \,=\,0$ if and only if $K$ is semisimple. 
\end{enumerate} 
\end{corollary}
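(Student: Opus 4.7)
The plan is to apply Theorem \ref{2nd-cohom-r}, which identifies $H^2(G/H,\,\mathbb{R})$ with $[\mathfrak{z}(\mathfrak{k})^*]^{K/K^0}$, and then analyze when this space vanishes under the hypotheses of each part.

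For part (1), I would observe that $K$ and $K^0$ share the same Lie algebra $\mathfrak{k}$. If $K^0$ is semisimple, then $\mathfrak{k}$ is semisimple, so $\mathfrak{z}(\mathfrak{k}) \,=\, 0$, and therefore $\mathfrak{z}(\mathfrak{k})^* \,=\, 0$, which forces the fixed-point subspace $[\mathfrak{z}(\mathfrak{k})^*]^{K/K^0}$ to be zero. By Theorem \ref{2nd-cohom-r}, this gives $H^2(G/H,\,\mathbb{R}) \,=\, 0$.

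For part (2), the key input is the standard fact that a maximal compact subgroup of a connected Lie group is connected (which follows from the Cartan/Iwasawa decomposition used in the proof of Theorem \ref{mostow}). Thus, when $H$ is connected, its maximal compact subgroup $K$ satisfies $K \,=\, K^0$, so the quotient $K/K^0$ is trivial and the action of $K/K^0$ on $\mathfrak{z}(\mathfrak{k})^*$ is trivial. Hence Theorem \ref{2nd-cohom-r} reduces to the identification
$$H^2(G/H,\,\mathbb{R}) \,\cong\, \mathfrak{z}(\mathfrak{k})^*\,.$$
This vector space vanishes if and only if $\mathfrak{z}(\mathfrak{k}) \,=\, 0$. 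Since $K$ is compact, it is reductive, and for a compact connected group being semisimple is equivalent to having trivial center of its Lie algebra; so $\mathfrak{z}(\mathfrak{k}) \,=\, 0$ is equivalent to $K$ being semisimple. This gives the desired equivalence.

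There is essentially no obstacle here: both parts are direct consequences of Theorem \ref{2nd-cohom-r} together with elementary properties of the Lie algebra of a compact reductive group. The only subtle point worth flagging is the use of the connectedness of the maximal compact subgroup of a connected Lie group in part (2), but since this is standard and has already been implicitly invoked in the proof of Theorem \ref{2nd-cohom-r}, no separate argument is required.
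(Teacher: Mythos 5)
Your proposal is correct and follows exactly the paper's argument: both parts are read off from Theorem \ref{2nd-cohom-r}, using $\mathfrak{z}(\mathfrak{k})=0$ for part (1) and the connectedness of $K$ (hence triviality of the $K/K^0$--action) together with the equivalence of semisimplicity and $\mathfrak{z}(\mathfrak{k})=0$ for a compact group in part (2). Nothing further is needed.
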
 
 
\begin{proof}
If $K^0$ is semisimple, then ${\mathfrak z} ({\mathfrak 
k})\,=\, 0$. Therefore, the first statement follows readily from 
Theorem \ref{2nd-cohom-r}. 
 
To prove the second statement, assume that $H$ is connected. 
This implies that $K$ is connected. 
Thus, we have $({\mathfrak z} ({\mathfrak 
k})^*)^{K/K^0} = {\mathfrak z} ({\mathfrak k})^*$. Hence from 
Theorem \ref{2nd-cohom-r} it follows that 
$$ 
H^2 (G/H, \, \r) \,=\, 0 
$$ 
if and only if ${\mathfrak z} ({\mathfrak k})\, =\, 0$. The center 
${\mathfrak z} ({\mathfrak k})$ vanishes if and only if 
$K$ is semisimple. 
\end{proof} 

The following corollary of Theorem \ref{2nd-cohom-r} shows that
$H^2(G/H, \, \r)$ is independent of $G$; it depends only on $H$.
 
\begin{corollary}\label{cor1} 
Let $G_1$ and $G_2$ be two connected Lie groups admitting semisimple 
maximal compact subgroups, and let $H$ be a Lie group with finitely many 
connected components. Assume that $H$ is 
embedded in both $G_1$ and in $G_2$ as closed subgroup. Then 
$$ 
H^2(G_1/H, \, \r) \, = \, H^2(G_2/H, \, \r)\, . 
$$ 
\end{corollary}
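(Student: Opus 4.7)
The plan is to reduce both $H^2(G_i/H,\r)$ to a common intrinsic invariant of $H$, namely the one furnished by Theorem \ref{2nd-cohom-r}, and then invoke conjugacy of maximal compact subgroups of $H$.

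First, for each $i\in\{1,2\}$, I would choose a maximal compact subgroup $M_i$ of $G_i$ such that $K_i:=M_i\cap H$ is a maximal compact subgroup of $H$. The existence of such $M_i$ is precisely the compatibility hypothesis in Theorem \ref{mostow}; it is the standard consequence of the Iwasawa/Cartan decomposition, valid because $H$ has finitely many connected components (this is the one place where that hypothesis is essential). Set $\mathfrak{k}_i := {\rm Lie}(K_i)$. Because $G_i$ admits a semisimple maximal compact subgroup, Theorem \ref{2nd-cohom-r} applies to the pair $(G_i,H)$ and yields natural isomorphisms
$$
H^2(G_i/H,\,\r)\;\cong\;[\mathfrak{z}(\mathfrak{k}_i)^*]^{K_i/K_i^0},\qquad i=1,2.
$$
The critical point is that the right-hand side is defined purely in terms of $K_i$ (a maximal compact subgroup of $H$) and involves no further reference to the ambient group $G_i$.

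Next, by the conjugacy theorem for maximal compact subgroups of a Lie group with finitely many connected components, there exists $h\in H$ with $h K_1 h^{-1}=K_2$. Conjugation by $h$ gives a Lie group isomorphism $K_1\xrightarrow{\sim}K_2$, which induces an isomorphism of Lie algebras $\mathfrak{k}_1\cong\mathfrak{k}_2$ carrying $\mathfrak{z}(\mathfrak{k}_1)$ to $\mathfrak{z}(\mathfrak{k}_2)$ and intertwining the respective component-group actions $K_i/K_i^0$. Dualizing and taking fixed points then produces an isomorphism
$$
[\mathfrak{z}(\mathfrak{k}_1)^*]^{K_1/K_1^0}\;\cong\;[\mathfrak{z}(\mathfrak{k}_2)^*]^{K_2/K_2^0},
$$
and combining with the two isomorphisms above finishes the proof.

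The only genuinely delicate step is the first one, i.e., the existence of maximal compact subgroups $M_i\subset G_i$ meeting $H$ in a maximal compact subgroup of $H$; the remainder of the argument is purely formal once Theorem \ref{2nd-cohom-r} is in hand. This delicate step is however standard and has already been implicitly used in establishing Theorem \ref{2nd-cohom-r} itself.
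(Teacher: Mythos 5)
Your proposal is correct and is essentially the paper's own argument: the paper derives this corollary directly from Theorem \ref{2nd-cohom-r} on the grounds that the formula $[\mathfrak{z}(\mathfrak{k})^*]^{K/K^0}$ refers only to a maximal compact subgroup of $H$ and not to the ambient group. Your additional appeal to conjugacy of maximal compact subgroups of $H$ just makes explicit the well-definedness that the paper leaves implicit.
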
 
 
If the dimension of ${\mathfrak z} ({\mathfrak k})$ is small, and the 
finite group $K/K^0$ is of specific type, then more explicit 
results hold. 
 
\begin{corollary}\label{small-center} 
Let $G$, $H$, $K$ and ${\mathfrak z} 
({\mathfrak k})$ be as in Theorem \ref{2nd-cohom-r}. 
\begin{enumerate} 
\item Assume that $\dim {\mathfrak z} ({\mathfrak k}) \,\leq\, 1$, 
and also assume that ${\mathbb Z}/2{\mathbb Z}$ is not a quotient of
$K/K^0$. Then $[{\mathfrak z} ({\mathfrak 
k})^*]^{K/K^0} \,=\, 
{\mathfrak z} ({\mathfrak k})^*$, and consequently 
$H^2(G/H, \,\r) \,= \,{\mathfrak z} ({\mathfrak k})^*$. 
 
\item Assume that $\dim {\mathfrak z} ({\mathfrak k}) \,\leq\, 2$,
and also assume that $K/K^0$ admits no nontrivial abelian quotient. 
Then $[{\mathfrak z} ({\mathfrak k})^*]^{K/K^0} \,= \,{\mathfrak z} 
({\mathfrak k})^*$, and consequently 
$H^2 (G/H, \,\r) \,= \,{\mathfrak z} ({\mathfrak k})^*$. 
\end{enumerate} 
\end{corollary}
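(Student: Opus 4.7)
The strategy is to apply Theorem \ref{2nd-cohom-r}, which identifies $H^2(G/H, \r)$ with $[{\mathfrak z}({\mathfrak k})^*]^{K/K^0}$, and then to prove that under either set of hypotheses the finite group $\Gamma := K/K^0$ acts trivially on the low-dimensional real vector space $V := {\mathfrak z}({\mathfrak k})^*$. The image $\overline{\Gamma}$ of the associated representation $\Gamma \to GL(V)$ is a finite subgroup of $GL(V)$ and a quotient of $\Gamma$, so any nontrivial quotient of $\overline{\Gamma}$ produces a nontrivial quotient of $K/K^0$ of the same isomorphism type. The plan in each part is to show that the stated hypothesis on $K/K^0$ forces $\overline{\Gamma}$ to be trivial, after which $V^{\Gamma} = V$ and the conclusion is immediate.

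For part (1), $\dim V \leq 1$. The zero-dimensional case is vacuous. When $\dim V = 1$, $GL(V) \cong \r^{\times}$, and the only nontrivial finite subgroup of $\r^{\times}$ is $\{\pm 1\} \cong {\mathbb Z}/2{\mathbb Z}$. If $\overline{\Gamma}$ were nontrivial, then ${\mathbb Z}/2{\mathbb Z}$ would be a quotient of $K/K^0$, contradicting the hypothesis.

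For part (2), $\dim V \leq 2$. Averaging any inner product on $V$ over the finite group $\overline{\Gamma}$ realizes $\overline{\Gamma}$ as a subgroup of the orthogonal group $O(V)$, which has rank at most $2$. By the standard classification, every finite subgroup of $O(1)$ or $O(2)$ is either cyclic or dihedral, and any nontrivial such group admits a nontrivial abelian quotient: cyclic groups are themselves abelian, while the abelianization of the dihedral group $D_n$ is ${\mathbb Z}/2{\mathbb Z}$ for $n$ odd and $({\mathbb Z}/2{\mathbb Z})^2$ for $n$ even. A nontrivial abelian quotient of $\overline{\Gamma}$ would give a nontrivial abelian quotient of $K/K^0$, contradicting the hypothesis; hence $\overline{\Gamma}$ is trivial.

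The only nontrivial input is the classification of finite subgroups of $O(2)$, which is classical, so no genuine obstacle is anticipated; with that in hand, both assertions follow at once from Theorem \ref{2nd-cohom-r}.
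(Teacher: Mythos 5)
Your proposal is correct and follows essentially the same route as the paper: both parts reduce, via Theorem \ref{2nd-cohom-r}, to showing that the image of $K/K^0$ in ${\rm Aut}_{\r}({\mathfrak z}({\mathfrak k})^*)$ must be trivial, using that the only nontrivial finite subgroup of ${\rm GL}_1(\r)$ is ${\mathbb Z}/2{\mathbb Z}$ and that every nontrivial finite subgroup of ${\rm GL}_2(\r)$ has a nontrivial abelian quotient. The only difference is that you supply the justification (averaging an inner product and the cyclic/dihedral classification of finite subgroups of $O(2)$) for the second fact, which the paper simply asserts.
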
 
 
\begin{proof} 
If $\dim {\mathfrak z} ({\mathfrak k})\,=\, 1$, then
${\mathbb Z}/2\mathbb Z$ is the 
only nontrivial finite subgroup of ${\rm Aut}_{\r} ({\mathfrak z} 
({\mathfrak k}))$. Therefore, the first part
of the corollary follows from Theorem \ref{2nd-cohom-r}. 
 
If $\dim {\mathfrak z} ({\mathfrak k})\,=\,2$, then 
any nontrivial finite subgroup of ${\rm Aut}_{\r} ({\mathfrak z} 
({\mathfrak k}))$ has a nontrivial abelian quotient. 
Therefore, the second part of the corollary 
also follows from Theorem \ref{2nd-cohom-r}. 
\end{proof} 
 
We use the set--up of Theorem \ref{2nd-cohom-r} and the notation
established in the proof of Theorem \ref{2nd-cohom-r}. 
Our aim is to get a method, stemming from the proof of 
Theorem \ref{2nd-cohom-r}, which tells us when a closed two--form on 
$G/H$ is exact. Let 
$$\phi \, : M/K \, \longrightarrow \, G/H$$ be the natural embedding. 
Let $w \in \Omega^2 (G/H)$ be such that $dw =0$.
As $\phi (M/K)$ is a deformation retract of $G/H$, 
the closed two--form $$w_2 \,=\, \phi^* w \,\in \,\Omega^2 (M/K)$$ is 
exact if and only if $w$ is exact. 
Now, as in Proposition 28 (in page 309) of \cite{S}, we find a 
$M$--invariant form 
$w_3$ on $M/K$ such that $w_2 - w_3$ is exact. 
Let $$\pi \, :\, M \,\longrightarrow \,M/K$$ be the quotient map. 
We identify the Lie algebra ${\mathfrak m}$ with the tangent space 
$T_eM$ of $M$ at the identity element. 
Consider the ${\rm Ad} (K)$--invariant closed two--form $$\widetilde{w} 
\,\in\, \Omega^2 ({\mathfrak m}/ {\mathfrak k})$$ 
defined by $(X, Y )\, \longmapsto\, w_3 ( eK) ( d \pi_e X, d \pi_e 
Y)$, where $X, Y \in {\mathfrak m}$. 
There is a unique ${\rm Ad} (K)$--invariant 
form $\eta \,\in\, \Omega^1 ({\mathfrak m})$ such that 
$d\eta\, =\, \widetilde{w}$; the existence of $\eta$
follows from \eqref{e4}, and its uniqueness follows from
the injectivity of the map in \eqref{i-m}.

\begin{corollary}\label{exactness} 
With the above notation, the $2$-form $w$ is exact 
if and only if $\eta ( {\mathfrak z} ({\mathfrak k}))\,=\,0$.
\end{corollary}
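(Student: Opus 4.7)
The plan is to trace the class $[w] \in H^2(G/H, \r)$ through the sequence of isomorphisms constructed in the proof of Theorem \ref{2nd-cohom-r} and to identify its image in $({\mathfrak z}({\mathfrak k})^*)^K$ as the restriction $\eta|_{{\mathfrak z}({\mathfrak k})}$. Once this is done the corollary is immediate, since an isomorphism sends $[w]$ to zero precisely when $\eta|_{{\mathfrak z}({\mathfrak k})}=0$.

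First I would reduce the exactness question for $w$ to a statement inside the algebraic complex on ${\mathfrak m}/{\mathfrak k}$. Because $\phi$ is a homotopy equivalence by Theorem \ref{mostow}, $w$ is exact if and only if $w_2 = \phi^* w$ is; and since $w_2 - w_3$ was chosen to be exact, this is in turn equivalent to exactness of the $M$-invariant form $w_3$. Under the isomorphism \eqref{e1}, the class $[w_3]$ corresponds to the class of $\widetilde w$ in the quotient of ${\rm Ker}\bigl(d\colon \Omega^2({\mathfrak m}/{\mathfrak k})^K \to \Omega^3({\mathfrak m}/{\mathfrak k})^K\bigr)$ by $d\bigl(\Omega^1({\mathfrak m}/{\mathfrak k})^K\bigr)$. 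By \eqref{e4} we have $\widetilde w = d\eta$, and the injectivity of the map $d$ in \eqref{i-m} then shows that $[\widetilde w] = 0$ if and only if $\eta$ itself lies in $\Omega^1({\mathfrak m}/{\mathfrak k})^K$, that is, $\eta({\mathfrak k}) = 0$.

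The last step is to see that, for our $K$-invariant $\eta$, the condition $\eta({\mathfrak k}) = 0$ is equivalent to $\eta({\mathfrak z}({\mathfrak k})) = 0$. Writing ${\mathfrak k} = {\mathfrak z}({\mathfrak k}) + [{\mathfrak k}, {\mathfrak k}]$ as in \eqref{z}, the restriction $\eta|_{[{\mathfrak k}, {\mathfrak k}]}$ is an element of $([{\mathfrak k}, {\mathfrak k}]^*)^K$, which vanishes by \eqref{e5}. Hence $\eta|_{{\mathfrak k}}$ is entirely determined by $\eta|_{{\mathfrak z}({\mathfrak k})}$, and the claimed equivalence follows.

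The main (quite mild) obstacle is to keep track of the various identifications and verify that $[w]$ is indeed sent to $\eta|_{{\mathfrak z}({\mathfrak k})}$ under the composite isomorphism of Theorem \ref{2nd-cohom-r}; the only algebraic input required is the vanishing \eqref{e5}, which is already established there.
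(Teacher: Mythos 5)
Your argument is correct and follows the same route as the paper: reduce exactness of $w$ to exactness of $w_3$, use the injectivity of $d$ on $\Omega^1({\mathfrak m})$ to translate $[\widetilde w]=0$ into $\eta\in\Omega^1({\mathfrak m}/{\mathfrak k})^K$, i.e.\ $\eta({\mathfrak k})=0$, and then use the decomposition \eqref{z} together with the vanishing \eqref{e5} to replace $\eta({\mathfrak k})=0$ by $\eta({\mathfrak z}({\mathfrak k}))=0$. This is precisely the paper's proof (the last equivalence is stated in the remark immediately following the corollary), so nothing further is needed.
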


\begin{proof}
If $w$ is exact, then it was observed in the proof of
Theorem \ref{2nd-cohom-r} that $\eta\, \in\, \Omega^1 (
{\mathfrak m}/ {\mathfrak k})^K$ (see \eqref{e7}). Hence
in that case $\eta ({\mathfrak z} ({\mathfrak k}))\,=\,0$.

Conversely, if $\eta ( {\mathfrak z} ({\mathfrak k}))\,=\,0$,
then from \eqref{z} it follows that $\eta\, \in\, \Omega^1 (
{\mathfrak m}/ {\mathfrak k})^K$. Hence $w$ is exact if
$\eta ( {\mathfrak z} ({\mathfrak k}))\,=\,0$.
\end{proof}

We note 
that $\eta ( {\mathfrak z} ({\mathfrak k}))\,=\,0$ if and only if $\eta 
( {\mathfrak k})\,=\,0$, because ${\mathfrak k}\,=\, {\mathfrak z} 
({\mathfrak k})\oplus [{\mathfrak k}\, ,{\mathfrak k}]$ and $\eta$ is
${\rm Ad} (K)$--invariant.
 
\section{Criterion for exactness of the Kostant--Kirillov form} 

We shall treat the complex case and the real case separately.
 
\subsection{The real case} 

Let ${\mathfrak g}$ be a real semisimple Lie algebra. A semisimple element $A \in {\mathfrak g}$ is
called {\it hyperbolic} if all the eigenvalues of ${\rm ad} (A) : {\mathfrak g} \to {\mathfrak g}$ are
real. On the other hand, a semisimple element $B \in {\mathfrak g}$ is
called {\it compact} if all the eigenvalues of ${\rm ad} (B) : {\mathfrak g} \to {\mathfrak g}$ are purely imaginary.
The elements of ${\mathfrak g}$ has a {\it complete Jordan 
decomposition} (see page 430 of \cite{He}). This means that any element 
$X \,\in\, {\mathfrak g}$ 
can be written uniquely as $X\,=\, X_s + X_n$, where $X_s$ is
semisimple and $X_n$ is nilpotent with $[X_s\, ,X_n]\,=\, 0$. 
The semisimple part $X_s$ can be further written as $X_s = X_k + X_h$, 
where $X_h$ is the hyperbolic part of $X_s$, and $X_k$ is the 
compact part of $X_s$. Thus
\begin{equation}\label{e8} 
X \,= \, X_n + X_k + X_h\, . 
\end{equation} 
The four components $X_s$, $X_n$, $X_k$ and $X_h$ commute with each 
other, and the above decomposition of $X$ is unique. 
 
\begin{proposition}\label{iff-orbit-2} 
Let $G$ be a real semisimple Lie group with Lie algebra $\mathfrak g$ such
that the center of $G$ is a finite group. Further assume that a (hence any) 
maximal compact subgroup of $G$ is semisimple. 
Let $X \in {\mathfrak g}$ be an arbitrary element, and let ${\mathcal O}_X 
\subset {\mathfrak g}$ be the orbit of $X$ for the adjoint action
of $G$. Let $\omega\, 
\in\, \Omega^2 ({\mathcal O}_X )$ be the Kostant--Kirillov symplectic 
form on ${\mathcal O}_X$. Then $\omega$ is exact if and only if 
$X_k \,=\,0$, where $X_k$ is the compact part in \eqref{e8}
(this condition is equivalent to the condition that all the eigenvalues 
of the linear operator ${\rm ad}(X) \,:\, {\mathfrak g}\, 
\longrightarrow\, {\mathfrak g}$ are real).
\end{proposition}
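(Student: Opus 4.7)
The plan is to identify $\mathcal{O}_X$ with the homogeneous space $G/H$, where $H := Z_G(X)$, and then apply Corollary~\ref{exactness}. By Mostow's theorem (Theorem~\ref{mostow}), I can choose a maximal compact subgroup $M \subseteq G$ so that $K := M \cap H$ is maximal compact in $H$; write $\mathfrak{m}, \mathfrak{k}, \mathfrak{h}$ for the respective Lie algebras, and let $B$ denote the Killing form of $\mathfrak{g}$. Under the identification $\mathfrak{g} \cong \mathfrak{g}^*$ induced by $B$, the Kostant--Kirillov form $\omega$ at $X$ is given by the standard formula $\omega_X([Y,X],[Z,X]) = B(X,[Y,Z])$. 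Since $\omega$ is already $G$-invariant (hence $M$-invariant), no averaging step is needed in the setup preceding Corollary~\ref{exactness}: the induced $\mathrm{Ad}(K)$-invariant $2$-form on $\mathfrak{m}/\mathfrak{k}$ is $\widetilde{\omega}(\bar Y, \bar Z) = B(X,[Y,Z])$.

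Next, I will exhibit the unique $\mathrm{Ad}(K)$-invariant 1-form $\eta \in \Omega^1(\mathfrak{m})$ with $d\eta = \widetilde{\omega}$ explicitly by setting
$$
\eta(W) \,:=\, -B(X, W), \qquad W \in \mathfrak{m}.
$$
Indeed $d\eta(Y,Z) = -\eta([Y,Z]) = B(X,[Y,Z]) = \widetilde{\omega}(Y,Z)$, and $\mathrm{Ad}$-invariance of $B$ together with $\mathrm{Ad}(k)X = X$ for $k \in K \subseteq Z_G(X)$ gives $K$-invariance. By Corollary~\ref{exactness}, $\omega$ is exact if and only if $\eta(\mathfrak{z}(\mathfrak{k})) = 0$, i.e., $B(X,\mathfrak{z}(\mathfrak{k})) = 0$.

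The remaining task is to prove that $B(X,\mathfrak{z}(\mathfrak{k})) = 0$ is equivalent to $X_k = 0$. Choose a Cartan involution $\theta$ of $\mathfrak{g}$ adapted to $X = X_n + X_k + X_h$, so that $X_k$ lies in the $(+1)$-eigenspace $\mathfrak{m}$ and $X_h$ lies in the $(-1)$-eigenspace $\mathfrak{p}$; this is possible because $X_k$ and $X_h$ are commuting semisimple elements, respectively compact and hyperbolic. Fix $Z \in \mathfrak{z}(\mathfrak{k})$. Since $Z \in \mathfrak{k} \subseteq \mathfrak{m}$ and $B(\mathfrak{m},\mathfrak{p}) = 0$, we get $B(X_h, Z) = 0$. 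Moreover $Z \in \mathfrak{h}$ commutes with $X$, hence (by uniqueness of the Jordan decomposition) with $X_n$; then $\mathrm{ad}(Z)$ is semisimple and commutes with the nilpotent $\mathrm{ad}(X_n)$, so a standard joint-eigenspace trace computation gives $B(X_n, Z) = \mathrm{tr}(\mathrm{ad}(X_n)\,\mathrm{ad}(Z)) = 0$. Thus $B(X,\mathfrak{z}(\mathfrak{k})) = B(X_k,\mathfrak{z}(\mathfrak{k}))$. Finally, $X_k \in \mathfrak{m} \cap \mathfrak{h} = \mathfrak{k}$ (since $[X_k,X] = 0$) and $X_k$ centralizes all of $\mathfrak{h}$, so $X_k \in \mathfrak{z}(\mathfrak{k})$; as $B$ is negative definite on $\mathfrak{m}$, one has $B(X_k, X_k) < 0$ whenever $X_k \neq 0$. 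This gives both directions.

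The principal technical point is the existence of a Cartan involution simultaneously fixing $X_k$ and negating $X_h$, which is the simultaneous placement of the commuting compact and hyperbolic parts into one Cartan decomposition; this is standard but nontrivial, and I would invoke it from the literature on the real Jordan decomposition (cf.\ p.~430 of \cite{He}). A secondary issue, needed so that Theorem~\ref{mostow} and Corollary~\ref{exactness} apply, is finiteness of the number of connected components of $H = Z_G(X)$, which follows from the standing assumption that $G$ has finite center together with real reductive structure theory.
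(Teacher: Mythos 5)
Your overall architecture coincides with the paper's: identify $\mathcal{O}_X$ with $G/Z_G(X)$, observe that the $\mathrm{Ad}(K)$--invariant primitive of the Kostant--Kirillov form is $\eta=-B(X,\cdot)$, use Corollary~\ref{exactness} to reduce exactness to the vanishing of $B(X,\cdot)$ on (the center of) the Lie algebra of a maximal compact subgroup $K$ of $Z_G(X)$, kill the nilpotent and hyperbolic contributions, and finish with the definiteness of $B$ on compact subalgebras. The nilpotent step ($B(X_n,Z)=0$ because $\mathrm{ad}(X_n)\mathrm{ad}(Z)$ is nilpotent) is exactly the paper's Claim~1, and the endgame via $X_k\in\mathfrak{z}(\mathfrak{k})$ and negative definiteness matches the paper's conclusion.

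The genuine gap is in your proof that $B(X_h,Z)=0$ for $Z\in\mathfrak{z}(\mathfrak{k})$. You deduce it from a single Cartan involution $\theta$ for which simultaneously $X_h\in\mathfrak{p}$ \emph{and} $Z\in\mathfrak{k}\subseteq\mathfrak{m}=\mathfrak{g}^{\theta}$. The existence of a $\theta$ adapted to the commuting pair $(X_k,X_h)$ is indeed standard and citable, but that is not the statement you need: you also need the maximal compact subgroup $K$ of $H=Z_G(X)$ fixed in your Mostow setup to lie inside the corresponding $M_\theta$. Nothing guarantees this. A $\theta$ adapted to $X_h$ satisfies $\theta\bigl(Z_{\mathfrak g}(X)\bigr)=Z_{\mathfrak g}(\theta X)$ with $\theta X=\theta X_n+X_k-X_h\neq X$ in general, so such a $\theta$ need not preserve $Z_G(X)$, and you cannot conjugate $K$ into $M_\theta$ without also moving $X$; the same issue undercuts your assertion $X_k\in\mathfrak{m}\cap\mathfrak{h}=\mathfrak{k}$. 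The paper circumvents this by proving the orthogonality \emph{pairwise} (its Claim~2): for each commuting hyperbolic/compact pair $(E,R)$ it places $E+R$ in a Cartan subalgebra, conjugates that Cartan subalgebra by some $g\in G$ to a $\theta$--stable one, reads off $\mathrm{Ad}(g)E\in\mathfrak{p}$ and $\mathrm{Ad}(g)R\in\mathfrak{k}_\theta$ from uniqueness of the decomposition \eqref{e8}, and concludes by $\mathrm{Ad}$--invariance of $B$; and it secures $X_k\in\mathrm{Lie}(K)$ by choosing the maximal compact subgroup of $Z_G(X)$ to contain the closed torus $\overline{\{\exp(tX_k)\}}$ (which is possible since that torus is compact, and is forced for the central $X_k$ since maximal compacts absorb compact normal subgroups). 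Your argument closes once the "single adapted $\theta$" step is replaced by this pairwise conjugation applied to $(X_h,Z)$ for each $Z\in\mathfrak{z}(\mathfrak{k})$, and $X_k\in\mathfrak{k}$ is justified as above.
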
 
 
\begin{proof} 
Let $B$ be the Killing form on ${\mathfrak g}$; it is nondegenerate 
because $\mathfrak g$ is semisimple. We first need to make a few 
observations regarding the Killing form $B$. 

\smallskip
\noindent 
{\it Claim 1:}\, Let $N \,\in\, {\mathfrak g}$ be a nilpotent element, 
and let $S\,\in\, {\mathfrak g}$ be a semisimple 
element, such that $[S\, , N] \,=\, 0$. Then $B (S, N) \,=\, 0$.
\smallskip

To prove the above claim, note that ${\rm ad}(N)$ and ${\rm ad}(N)$ commute
because $[S\, , N] \,=\, 0$. Since ${\rm ad}(N)$ is nilpotent, this
implies that ${\rm ad}(S){\rm ad}(N)$ is nilpotent. Consequently,
$B (S, N) \,=\, 0$, proving Claim 1.

\smallskip
\noindent 
{\it Claim 2:}\, Let $E \,\in\, {\mathfrak g}$ be a hyperbolic element, 
and let $R \,\in \, {\mathfrak g}$ 
be a compact element with $[E\, , R] \,=\, 0$. Then $B (E\, , R) =0$. 
\smallskip

To prove this claim, 
consider $C = E + R$. As $[E\, , R]\,=\, 0$, and both $E$ and $R$ are 
semisimple, it follows that $C$ is also semisimple. 
Further, by the 
uniqueness of complete Jordan decomposition one has that $C_h \,=\, E$ 
and $C_k \,=\, R$, where $C_h$ and $C_k$ respectively are the hyperbolic and
compact components of $C$ (see \eqref{e8}). 
 
Since $C$ is semisimple, there is a Cartan subalgebra ${\mathfrak 
t}^\prime$ of ${\mathfrak g}$ such that $C\,\in\,{\mathfrak t}^\prime$. 
Fix a maximal compact subgroup $K$ of $G$. The Lie algebra of $K$ will
be denoted by $\mathfrak k$. Let 
$$\theta \,:\, {\mathfrak g} \,\longrightarrow\, {\mathfrak g}$$ 
be a Cartan involution of
${\mathfrak g}$ such that ${\mathfrak k}\, =\, 
\{X \in {\mathfrak g} \, \mid\, \theta (X) = X\}$. Define 
$$ 
{\mathfrak p} \,:= \, \{ X \in {\mathfrak g} \, \mid \, \theta (X) = 
-X\}\, . 
$$ 
Then we have the Cartan decomposition 
${\mathfrak g} \,=\, {\mathfrak k} \oplus {\mathfrak p}$. 
Using Proposition 6.59 (in page 386) of \cite{K} we get hold of a 
$\theta$--stable Cartan
subalgebra, say ${\mathfrak t}$, such that 
${\mathfrak t}$ and ${\mathfrak t}^\prime$ are $G$--conjugate, 
where ${\mathfrak t}^\prime$ is the above Cartan subalgebra 
containing $C$; in other words, there is an element $g \,\in\, G$ such 
that ${\rm Ad}(g) ({\mathfrak t}^\prime)\, = \,{\mathfrak t}$. 
As ${\mathfrak t}$ is $\theta$--stable, we have
$$
{\mathfrak t} \,=\, ({\mathfrak t} \cap {\mathfrak k}) \oplus 
({\mathfrak t} \cap {\mathfrak p})\, . 
$$

Consider $D \,:=\, {\rm Ad} (g) (C)$. We have $D \,=\, D_1 + D_2$, where 
$D_1 \,\in\, {\mathfrak t} \bigcap {\mathfrak k}$ and 
$D_2 \,\in\, {\mathfrak t} \bigcap {\mathfrak p}$. Thus $D_1$ and $D_2$ 
are compact and hyperbolic semisimple elements respectively, 
and, moreover, $[D_1\, , D_2]\,=\, 0$
as $D_1, D_2 \,\in\, {\mathfrak t}$. Hence by the uniqueness of the 
decomposition in \eqref{e8}, we have
$$D_1 \,= \,D_k\,= \,{\rm Ad} (g) 
(C_k)\, ~\,~\, \text{~and~}\,~\,~\, 
D_2 \,= \,D_h\, = \, {\rm Ad} (g) (C_h)\, ,$$ 
where $D_K$ and $D_h$ respectively
are the compact and hyperbolic parts of $D$. 
As $D_k \, \in \, {\mathfrak k}$ and $D_h \,\in\, {\mathfrak p}$, we have
$$ 
B ( E\, , R) \,=\, B ({\rm Ad} (g) (C_h)\, ,{\rm Ad} (g) (C_k))\,=\, B 
(D_h\, , D_k) \, =\, 0\, . 
$$
This completes the proof of Claim 2. 
 
Now let $\mathbb S$ be the topological 
closure of the one--parameter subgroup 
$$ 
\{ \exp (t X_k) \, \mid\, t \in \r\}\, \subset\, G\, . 
$$ 
This $\mathbb S$ is a (compact) torus of $G$, and it is contained in the 
centralizer $Z_G ( X)$ of $X$ in $G$. As $Z_G (X)$ is a real algebraic 
group, it has finitely many connected components. Therefore, 
there is a maximal compact subgroup $L$ of $Z_G (X)$ containing $\mathbb S$
(Theorem 3.1 in page 180 of \cite{H}).
 
Let ${\mathfrak l}$ and $ {\mathfrak s}$ be the Lie algebras of $L$ and
$\mathbb S$ respectively.
Invoking Corollary \ref{exactness} we see that the 
the Kostant--Kirillov symplectic form $\omega \in \Omega^2 ({\mathcal 
O}_X )$ on the orbit ${\mathcal O}_X$ is exact 
if and only if $B (X\, , {\mathfrak l}) \,= \,0$. 
 
Take any $\xi \,\in \, {\mathfrak l}$. Since $\xi$ is a compact element 
of $Z_{\mathfrak g}(X)$, we have
$$ 
[\xi\, , X_h] \, = \, [\xi\, , X_k] \,= \, [\xi, X_n] \,=\, 0\, .$$ 
Now by Claim 1 and Claim 2, we have
$$ 
B (X\, , \xi)\,=\, B ( X_n\, , \xi) + B (X_h \, , \xi) + B ( X_k \, , \xi) 
\,=\, B(X_k \, , \xi)\, . 
$$
Thus $B (X\, , {\mathfrak l}) \,=\, 0$ if and only if $B (X_k\, , 
{\mathfrak l}) \,= \,0$. 
As the Killing form $B$ is negative definite on any compact subalgebra 
of ${\mathfrak g}$, it is definite 
on ${\mathfrak l}$. Since $X_k \,\in\, {\mathfrak l}$, 
we now conclude the following: 
$B (X_k\, , {\mathfrak l}) \, = \, 0$ if and only if $X_k =0$. 
This completes the proof of the proposition. 
\end{proof} 
 
\begin{remark}\label{re-exact} 
{\rm From Proposition \ref{iff-orbit-2} it follows immediately that 
for any nilpotent element $X$ of a real semisimple Lie algebra 
$\mathfrak g$, the Kostant--Kirillov form on the adjoint orbit 
${\mathcal O}_X$ is exact.} 
\end{remark} 
 
\subsection{The complex case} 
 
Let $G$ be a complex semisimple group with Lie algebra $\mathfrak g$.
Take any element 
$X \,\in\, {\mathfrak g}$. Let ${\mathcal O}_X \,\subset\, {\mathfrak 
g}$ be the orbit of $X$ for the adjoint action of $G$. Let 
$\omega \in \Omega^2 ({\mathcal O}_X )$ be the holomorphic 
Kostant--Kirillov symplectic form on ${\mathcal O}_X$. 
The real and imaginary parts of $\omega$ will be denoted 
by ${\rm Re} \, \omega$ and ${\rm Im} \, \omega$ respectively. 
 
It is observed in Lemma 2.1 of \cite{A-B-B} that if $X$ is semisimple, 
then ${\rm Re} \, \omega$ and ${\rm Im} \, \omega$ are real symplectic 
forms on ${\mathcal O}_X$. However, the proof of Lemma 2.1 of \cite{A-B-B} 
goes through for arbitrary $X$. Therefore, 
${\rm Re} \, \omega$ and ${\rm Im} \, \omega$ are real symplectic 
forms on ${\mathcal O}_X$ for all $X\, \in\, \mathfrak g$. 
 
\begin{proposition}\label{complex-iff-orbit-2} 
Let $X \in {\mathfrak g}$ be an arbitrary element. 
Then ${\rm Re} \,\omega$ (respectively, ${\rm Im} \,\omega$) is exact 
if and only if all the eigenvalues of the linear
operator ${\rm ad} (X) \,:\, {\mathfrak g} 
\,\longrightarrow\, {\mathfrak g}$ are real (respectively, 
purely imaginary). 
\end{proposition}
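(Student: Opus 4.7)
The strategy is to reduce to Proposition \ref{iff-orbit-2} by viewing the complex Lie group $G$ as a real Lie group $G_{\mathbb{R}}$. The underlying real Lie algebra ${\mathfrak g}_{\mathbb{R}}$ is semisimple, the center of $G_{\mathbb{R}}$ coincides with $Z(G)$ and is finite (standard for complex semisimple $G$), and a maximal compact subgroup of $G_{\mathbb{R}}$ is a compact real form of $G$, hence semisimple. Thus $G_{\mathbb{R}}$ satisfies the hypotheses of Proposition \ref{iff-orbit-2}. Moreover, as a smooth manifold the $G$-orbit ${\mathcal O}_X$ agrees with the $G_{\mathbb{R}}$-adjoint orbit of $X$, so we can compare the holomorphic Kostant--Kirillov form $\omega$ on ${\mathcal O}_X$ with the real Kostant--Kirillov form $\omega^{\mathbb{R}}$ associated to the $G_{\mathbb{R}}$-orbit.

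For the real part, the key calculation is the identity $B_{\mathbb{R}}(Y,Z)=2\,{\rm Re}\,B(Y,Z)$ relating the Killing forms (this follows from ${\rm tr}_{\mathbb{R}}(T)=2\,{\rm Re}\,{\rm tr}_{\mathbb{C}}(T)$ for a $\mathbb{C}$-linear operator $T$). Plugging this into the definition of the Kostant--Kirillov form yields $\omega^{\mathbb{R}} = 2\,{\rm Re}\,\omega$. So ${\rm Re}\,\omega$ is exact if and only if $\omega^{\mathbb{R}}$ is exact, and Proposition \ref{iff-orbit-2} says this happens exactly when all eigenvalues of ${\rm ad}(X):{\mathfrak g}_{\mathbb{R}}\to{\mathfrak g}_{\mathbb{R}}$ are real. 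Since ${\rm ad}(X)$ is $\mathbb{C}$-linear, its real eigenvalues are $\{\lambda_i,\overline{\lambda_i}\}$, where $\{\lambda_i\}$ are its complex eigenvalues, and these are all real precisely when each $\lambda_i\in\mathbb{R}$.

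For the imaginary part, the idea is to reduce to the real-part case by replacing $X$ with $iX$. The map $\psi:{\mathcal O}_X\to{\mathcal O}_{iX}$, $Y\mapsto iY$, is a diffeomorphism: it is clearly smooth with smooth inverse, and if $Y={\rm Ad}(g)X$ then $iY={\rm Ad}(g)(iX)$ by $\mathbb{C}$-linearity of ${\rm Ad}(g)$. The differential is multiplication by $i$, and a short computation using $B(iY,[A,B])=i\,B(Y,[A,B])$ gives $\psi^*\omega^{iX} = i\,\omega^X$, hence $\psi^*{\rm Re}\,\omega^{iX} = -{\rm Im}\,\omega^X$. Therefore ${\rm Im}\,\omega^X$ is exact if and only if ${\rm Re}\,\omega^{iX}$ is exact. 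By the case already proved, this happens if and only if every eigenvalue of ${\rm ad}(iX)=i\,{\rm ad}(X)$ on ${\mathfrak g}$ is real, i.e.\ every eigenvalue of ${\rm ad}(X)$ is purely imaginary.

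The only mildly subtle point, and hence the likeliest stumbling block, is the bookkeeping in the two Killing--form/Kostant--Kirillov identities $B_{\mathbb{R}} = 2\,{\rm Re}\,B$ and $\psi^*\omega^{iX} = i\,\omega^X$; once these are in hand the two statements follow formally from Proposition \ref{iff-orbit-2} applied to $G_{\mathbb{R}}$ (with $X$ and with $iX$ respectively).
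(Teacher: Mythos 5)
Your proposal is correct and follows essentially the same route as the paper: pass to the underlying real group $G^{\r}$, use $B^{\r}=2\,{\rm Re}\,B$ to identify the real Kostant--Kirillov form with $2\,{\rm Re}\,\omega$ and invoke Proposition \ref{iff-orbit-2}, then handle ${\rm Im}\,\omega$ via the multiplication-by-$\sqrt{-1}$ diffeomorphism ${\mathcal O}_X\to{\mathcal O}_{\sqrt{-1}X}$. The only cosmetic difference is a sign/factor in the pullback identity, which is irrelevant for exactness.
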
 
 
\begin{proof} 
The group $G$ (respectively, 
the Lie algebra ${\mathfrak g}$) when considered as a real analytic group 
(respectively, real Lie algebra) will be denoted 
by $G^\r$ (respectively, ${\mathfrak g}^\r$). 
As $G$ is complex semisimple, any of the maximal compact subgroups of 
$G^\r$ is a real form of $G$ and thus they are all semisimple. We fix a maximal 
compact subgroup $K$ of $G^\r$, and we denote the Lie algebra of $K$ by 
${\mathfrak k}$. 
 
Let 
\begin{equation}\label{J} 
J \,:\, {\mathfrak g} \,\longrightarrow\, {\mathfrak g} 
\end{equation} 
be the linear map defined by multiplication with $\sqrt{-1}$. 
As ${\mathfrak g}^\r\,= \,{\mathfrak k}\oplus \sqrt{-1}\cdot {\mathfrak 
k}$, we have the conjugation 
$$ 
\theta \,:\, {\mathfrak k}\oplus\sqrt{-1}{\mathfrak k}\,\longrightarrow\, 
{\mathfrak k}\oplus \sqrt{-1}\cdot {\mathfrak k} 
$$ 
defined by $x + \sqrt{-1}\cdot y \, \longmapsto\, x-\sqrt{-1}\cdot y$, 
which is the Cartan involution of ${\mathfrak g}^\r$. The 
Killing form $B$ on the complex Lie algebra ${\mathfrak g}$ 
and the Killing form $B^\r$ on the real Lie algebra 
${\mathfrak g}^\r$ are related by the identity $B^\r \,= \,2 {\rm Re}\, B$. 
 
We will first prove that ${\rm Re} \,\omega$ is exact 
if and only if all the eigenvalues of ${\rm ad}(X)$ are real. 
 
Let $\omega_{\r}$ be the Kostant--Kirillov form on 
${\mathcal O}_X$ when considered as the $G^\r$--orbit of $X$. 
As $B^{\r} = 2 {\rm Re} \, B$, it follows that 
$$ 
\omega_{\r} \,=\, 2{\rm Re} \, \omega\, . 
$$ 
So ${\rm Re} \, \omega$ is exact if and only if $\omega_{\r}$ is 
exact. Now using Theorem \ref{iff-orbit-2} we conclude that 
${\rm Re} \, \omega$ is 
exact if and only if all the eigenvalues of ${\rm ad}(X)$ are real. 
 
To prove the criterion for ${\rm Im} \, \omega$, 
first note that the map $J$ in \eqref{J} induces a 
$G$--equivariant diffeomorphism 
$$ 
\widetilde{J} \,:\, {\mathcal O}_X \,\longrightarrow\, {\mathcal 
O}_{\sqrt{-1}X}\, . 
$$ 
Let $\omega^\prime_{\r}$ be the Kostant--Kirillov form on 
${\mathcal O}_{\sqrt{-1}X}$ constructed by considering it as
the $G^\r$--orbit of $\sqrt{-1}X$. Then one has 
$$ 
\widetilde{J}^* \omega^\prime_{\r} \,=\, {\rm Im} \, \omega\, . 
$$ 
Thus the form ${\rm Im} \, \omega$ is exact if and only if 
$\omega^\prime_{\r}$ is exact. The first 
part of the proposition says that $\omega^\prime_{\r}$ is exact if and 
only if all the eigenvalues of ${\rm ad}(\sqrt{-1}X)$ are real, 
which, in turn, is equivalent to the statement that all the eigenvalues 
of ${\rm ad}(X)$ are purely imaginary. This completes the proof of the 
proposition. 
\end{proof} 

\begin{remark}
{\rm From Proposition \ref{complex-iff-orbit-2} it follows immediately 
that for any nilpotent element $X\, \in \, \mathfrak g$, the two 
$2$--forms ${\rm Re} \,\omega$ and ${\rm Im} \,\omega$ 
on ${\mathcal O}_X$ are exact.}
\end{remark}

\section{Nilpotent orbits in complex semisimple groups}\label{sec-n}

In this section we will compute the second de Rham cohomology of the nilpotent 
orbits in a complex simple group.
But first it will be shown directly that the first and 
second de Rham cohomologies of 
orbits of regular nilpotent elements vanish. Recall that a 
nilpotent element $X$ in a complex simple Lie algebra $\mathfrak g$
is called {\it regular} if $\dim [X\, ,{\mathfrak g}]\, \geq\,
\dim [Y\, ,{\mathfrak g}]$ for all nilpotent $Y\,\in\, \mathfrak g$.

\begin{lemma}\label{regular-nilpotent}
Let $G$ be a complex 
semisimple group. Let $X$ be a regular nilpotent element
in the Lie algebra $\mathfrak g$ of $G$, and let ${\mathcal O}_X\,\subset\,
\mathfrak g$ be the orbit of $X$ under the adjoint action. Then
$$
H^1({\mathcal O}_X,\, {\mathbb R})\,=\,0\,=\, 
H^2({\mathcal O}_X,\, {\mathbb R})\, .
$$
\end{lemma}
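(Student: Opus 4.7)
The plan is to identify ${\mathcal O}_X$ with $G/Z_G(X)$ and show that a maximal compact subgroup of $Z_G(X)$ is a finite group; the vanishing of both cohomology groups will then follow from the machinery developed in Section~3.

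The crucial Lie-theoretic input is Kostant's theorem on principal nilpotents: for a regular nilpotent element $X$ in a complex semisimple Lie algebra $\mathfrak g$, the centralizer $Z_{\mathfrak g}(X)$ is abelian of dimension $\mathrm{rank}(\mathfrak g)$ and consists entirely of nilpotent elements. A quick way to see the nilpotency is to embed $X$ in a principal $\mathfrak{sl}_2$-triple $(X, H, Y)$, so that $\mathrm{ad}(H)$ acts on $Z_{\mathfrak g}(X)$ with strictly positive eigenvalues, which forces every element of $Z_{\mathfrak g}(X)$ to be nilpotent. Consequently $Z_G(X)^0$ is a connected complex unipotent subgroup of $G$, hence diffeomorphic to a Euclidean space and having trivial maximal compact subgroup. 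Because $Z_G(X)$ is an algebraic subgroup it has only finitely many connected components, so any maximal compact subgroup $K$ of $Z_G(X)$ is a finite group and its Lie algebra is $\mathfrak k \,=\, 0$.

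Since $G$ is complex semisimple, its maximal compact subgroup is a compact real form and hence semisimple, so the hypotheses of Theorem \ref{2nd-cohom-r} hold with $H \,=\, Z_G(X)$. Applying that theorem directly,
$$H^2({\mathcal O}_X,\, \r) \,=\, [{\mathfrak z}({\mathfrak k})^*]^{K/K^0} \,=\, 0\, ,$$
because $\mathfrak k \,=\, 0$. For $H^1$, I would use Mostow's theorem (Theorem \ref{mostow}) to reduce to computing $H^1(M/K,\, \r)$ for a compact connected semisimple Lie group $M$ containing $K$, and then invoke the degree-one analogue of \eqref{e1}, giving
$$H^1({\mathcal O}_X,\, \r) \,=\, \ker\bigl(d \,:\, \Omega^1({\mathfrak m}/{\mathfrak k})^K \,\longrightarrow\, \Omega^2({\mathfrak m}/{\mathfrak k})^K\bigr)\, .$$
Any $\eta$ in this kernel satisfies $\eta([X,Y]) \,=\, 0$ for all $X, Y \,\in\, {\mathfrak m}$; but $\mathfrak m$ is semisimple so $[{\mathfrak m}, {\mathfrak m}] \,=\, {\mathfrak m}$, which forces $\eta \,=\, 0$ and hence $H^1({\mathcal O}_X,\, \r) \,=\, 0$.

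The main obstacle is the Lie-theoretic input on the structure of $Z_{\mathfrak g}(X)$ for regular nilpotent $X$; once that is granted, everything else is a direct application of the results of Section~3. This also makes transparent why the regularity hypothesis is essential: for a non-regular nilpotent $X$ the centralizer $Z_{\mathfrak g}(X)$ generally has a non-trivial reductive part, producing a possibly non-zero ${\mathfrak z}({\mathfrak k})$, and it is precisely this contribution that the detailed case-by-case analysis in Section~5 is designed to measure.
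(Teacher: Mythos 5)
Your proposal is correct, and it reaches the same underlying picture as the paper --- namely that ${\mathcal O}_X$ is homotopy equivalent to a compact connected semisimple group modulo a finite subgroup, after which Whitehead's lemma (Theorem \ref{whitehead}) kills both $H^1$ and $H^2$ --- but it gets there by a genuinely different route. The paper's proof is more hands-on: it uses the explicit structure $Z_G(X) \,=\, Z(G)\times Z_U(X)$ with $Z_U(X)$ inside the unipotent radical $U$ of a Borel subgroup, and then the Iwasawa decomposition $G\,=\,K\cdot(AU)$ to produce a concrete diffeomorphism $G/Z_G(X)\,\cong\, (K/Z(G))\times \r^n$, reducing everything to $H^i(K/Z(G),\,\r)$ for a compact real form $K$. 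You instead feed the Lie-theoretic input (Kostant's theorem that $Z_{\mathfrak g}(X)$ consists of nilpotent elements, proved via the positivity of the $\mathrm{ad}(H)$-spectrum on the centralizer for a principal ${\mathfrak s}{\mathfrak l}_2$-triple) into the general machinery of Section~3: the maximal compact subgroup of $Z_G(X)$ is finite, so Theorem \ref{2nd-cohom-r} gives $H^2\,=\,0$ immediately, and the degree-one instance of \eqref{e1} together with $[{\mathfrak m},{\mathfrak m}]\,=\,{\mathfrak m}$ gives $H^1\,=\,0$. Your version buys uniformity with the rest of the paper (it is exactly the specialization of Theorem \ref{nilpotent} to the case where the reductive part of the centralizer is trivial) at the cost of invoking Kostant's theorem and Theorem \ref{mostow}; the paper's version is more elementary and self-contained, needing only the Iwasawa decomposition and the factorization of $Z_G(X)$. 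Both are complete proofs; the only cosmetic caveat in yours is that one should note $Z(G)$ is finite (so that $Z_G(X)^0\,=\,Z_U(X)$ is indeed unipotent) and that $Z_G(X)$, being algebraic, has finitely many components --- both of which you do address.
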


\begin{proof}
Let $u \,= \,\exp (X)$.
Let $B$ be a Borel subgroup of $G$ containing 
$u$. The unipotent radical of $B$ will be denoted by $U$. Fix a 
maximal torus $T$ of $B$. As $X$ is regular nilpotent, we have
$$Z_G (X) \, =\, Z(G) \times Z_U(X)\, .$$ Let $A$ be the 
connected maximal $\r$--split part of $T$.

By Iwasawa decomposition, 
there is a maximal compact subgroup $K$ of $G$ such that
the natural map $K \times (AU)\, \longrightarrow\, G$ is a 
diffeomorphism. Consequently, the natural map
$$
(K/ Z(G)) \times (A(U/Z_U (X)))\, \longrightarrow\, G/Z_G (X)
$$
is a diffeomorphism.
As $(AU)/Z_G (X) \,=\, A \times (U/Z_U (X))$ is diffeomorphism 
to $\r^n$, we have
\begin{equation}\label{f1}
H^i (G/Z_G(X), \,\r)\,=\, H^i ( K/ Z(G),\, \r)
\end{equation}
for all $i$. Since $K$ is a real form of $G$, we know that $K$ is compact, 
connected and semisimple. Hence $H^1 ( K/ Z(G),\, \r) \, =\, 0
\,=\, H^2(K/Z(G),\, \r)$ by Theorem \ref{whitehead}.
Hence the proof is completed by \eqref{f1}.
\end{proof}

Let $G$ be a complex semisimple Lie group with Lie algebra
$\mathfrak g$. If $X$ is a nonzero nilpotent element,
then Jacobson-Morozov theorem says that there exist $E, Y \,\in \, {\mathfrak g}$
such that $[E,X] \,=\, 2X$, $[E,Y]\,=\,-2Y$ and $[X, Y]\,=\,E$. Clearly the subalgebra generated
by $X, E, Y$ is isomorphic to ${\mathfrak{s} \mathfrak{l}}_2$;
the triple $(X,E,Y)$ is called a {\it ${\mathfrak{s}\mathfrak{l}}_2$-triple} containing
$X$ as a {\it nil-positive} element. (See Theorem 3.3.1 (in page 37) of 
\cite{Co-M} for the details.) It is known that $Z_G (X,E,Y)$ is a Levi
factor of $Z_G (X)$ (Lemma 3.7.3 in page 50 of \cite{Co-M}).
The center of the Lie algebra of $Z_G (X,E,Y)$ will be denoted by ${\mathfrak c}$.
Let
$$
{\mathfrak c}^{Z_G (X,E,Y)/ Z_G (X,E,Y)^0}\, \subset\, {\mathfrak c}
$$
be the fixed point set for the natural action of
$Z_G (X,E,Y)/ Z_G (X,E,Y)^0$ on ${\mathfrak c}$.

\begin{theorem}\label{nilpotent}
Let $G$ be a complex semisimple Lie group with Lie algebra
$\mathfrak g$. Let $X\, \in\, \mathfrak g$ be a nonzero nilpotent element,
and let ${\mathcal O}_X\,\subset\,
\mathfrak g$ be the orbit of $X$ under the adjoint action.
Let $(X,E,Y)$ be a ${\mathfrak{s} \mathfrak{l}}_2$-triple containing
$X$ as a nil-positive element.
Considering ${\mathcal O}_X$ as a real manifold, there is an isomorphism
$$
H^2 _{\rm{dR}} ({\mathcal O}_X, {\mathbb C})\, :=\,
H^2 _{\rm{dR}} ({\mathcal O}_X, \r) \otimes_\r \c \,=\,
{\mathfrak c}^{Z_G (X,E,Y)/ Z_G (X,E,Y)^0}\, .
$$
\end{theorem}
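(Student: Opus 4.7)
The plan is to present ${\mathcal O}_X$ as a real homogeneous space and apply Theorem \ref{2nd-cohom-r}, then complexify. Passing to the identity component if necessary, view $G$ as a real Lie group; its maximal compact subgroups are compact real forms, hence semisimple, so the hypothesis on the ambient group in Theorem \ref{2nd-cohom-r} is automatic. The stabilizer of $X$ is $Z_G(X)\,=\,L\ltimes U$, where $L\,:=\, Z_G(X,E,Y)$ is a reductive Levi factor (the cited Lemma 3.7.3 of \cite{Co-M}) and $U$ is the unipotent radical. The group $U$, being complex unipotent, is diffeomorphic to a Euclidean space and in particular connected, while $L$ is a complex reductive algebraic group with finitely many components. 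Therefore $Z_G(X)$ has finitely many components, and any maximal compact subgroup $K$ of $Z_G(X)$ can be taken inside $L$, where it coincides with a maximal compact subgroup of $L$.

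Since $L$ is complex reductive, $K$ is a compact real form of $L$. Writing ${\mathfrak k}\,=\,{\rm Lie}(K)$ and ${\mathfrak l}\,=\,{\rm Lie}(L)$, we have ${\mathfrak l}\,=\,{\mathfrak k}\otimes_{\r}\c$, and consequently ${\mathfrak c}\,=\,{\mathfrak z}({\mathfrak l})\,=\,{\mathfrak z}({\mathfrak k})\otimes_{\r}\c$. The polar decomposition $L\,=\,K\cdot\exp(\sqrt{-1}\,{\mathfrak k})$ has connected second factor, so it induces an isomorphism $K/K^0\,\cong\,L/L^0$ compatible with the adjoint actions on ${\mathfrak z}({\mathfrak k})$ and on ${\mathfrak c}$ (the latter being the $\c$-linear extension of the former). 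Applying Theorem \ref{2nd-cohom-r} to $G/Z_G(X)\,=\,{\mathcal O}_X$ then yields
$$
H^2({\mathcal O}_X,\,\r)\,\cong\,[{\mathfrak z}({\mathfrak k})^*]^{K/K^0},
$$
and extending scalars gives
$$
H^2_{\rm dR}({\mathcal O}_X,\,\c)\,\cong\,[{\mathfrak z}({\mathfrak k})^*]^{K/K^0}\otimes_{\r}\c\,=\,[{\mathfrak c}^*]^{L/L^0}.
$$

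To reach the form stated in the theorem, the last step is to identify $[{\mathfrak c}^*]^{L/L^0}$ with ${\mathfrak c}^{L/L^0}$ via an $L$-equivariant isomorphism ${\mathfrak c}\,\cong\,{\mathfrak c}^*$. The ${\rm Ad}$-action of the ${\mathfrak s}{\mathfrak l}_2$-triple $(X,E,Y)$ on $\mathfrak g$ is completely reducible, so $\mathfrak l$ (the fixed subalgebra) has an ${\rm Ad}$-invariant complement in $\mathfrak g$; this implies the Killing form $B$ of $\mathfrak g$ restricts to a non-degenerate $L$-invariant symmetric bilinear form on $\mathfrak l$. Since ${\rm ad}$-invariance of $B$ makes ${\mathfrak z}({\mathfrak l})$ orthogonal to $[{\mathfrak l},{\mathfrak l}]$, the restriction $B|_{\mathfrak c}$ is non-degenerate, yielding the desired $L$-equivariant identification ${\mathfrak c}\,\cong\,{\mathfrak c}^*$, and hence the theorem.

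The main technical obstacle is the first step: showing carefully that a maximal compact subgroup of the generally non-reductive centralizer $Z_G(X)$ can be chosen inside the Levi factor $L\,=\,Z_G(X,E,Y)$ and coincides with a maximal compact of $L$, together with matching the component group $K/K^0$ with $Z_G(X,E,Y)/Z_G(X,E,Y)^0$ so that the adjoint actions on ${\mathfrak z}({\mathfrak k})$ and ${\mathfrak c}$ correspond under complexification. Once these identifications are in place, Theorem \ref{2nd-cohom-r} together with extension of scalars and the Killing-form duality immediately give the conclusion.
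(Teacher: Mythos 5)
Your proposal is correct and follows essentially the same route as the paper: identify $\mathcal{O}_X$ with $G/Z_G(X)$, use the Levi decomposition $Z_G(X)=Z_G(X,E,Y)\ltimes U$ to see that a maximal compact subgroup $K$ of $Z_G(X)$ sits inside $Z_G(X,E,Y)$ as a compact real form, apply Theorem \ref{2nd-cohom-r}, and complexify. The only cosmetic difference is at the end, where you remove the dual via the Killing-form identification ${\mathfrak c}\cong{\mathfrak c}^*$, whereas the paper simply uses the finite-group identity $[{\mathfrak z}({\mathfrak k})^*]^{K/K^0}=[{\mathfrak z}({\mathfrak k})^{K/K^0}]^*$ together with finite-dimensionality.
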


\begin{proof}
We will apply Theorem
\ref{2nd-cohom-r}. First observe that as $G$ is complex semisimple, any
maximal compact subgroup of $G$ is semisimple. The above defined group $Z_G ( X, E, Y)$
has finitely many connected components, because it is an algebraic group. Also, it
is a Levi subgroup
of $Z_G (X)$. Hence any maximal compact subgroup of $Z_G ( X, E, Y)$ is maximal compact
in $Z_G (X)$.

Let $K$ be a maximal compact subgroup of $Z_G ( X, E, Y)$ with Lie algebra ${\mathfrak k}$.
Thus by Theorem \ref{2nd-cohom-r}, we have
$$
H^2 ( {\mathcal O}_X, \r) \,=\, H^2 ( G / Z_G (X), \r)
\,=\, [{\mathfrak z} ({\mathfrak k})^*]^{K/K^0}\, .
$$
Further, it is easy to see that $[{\mathfrak z} ({\mathfrak k})^*]^{K/K^0} 
\,=\,[{\mathfrak z} ({\mathfrak k})^{K/K^0}]^*$.

Clearly, $Z_{\mathfrak g} ( X, E, Y)$
is the Lie algebra of $Z_G ( X, E, Y)$.
As $K$ is a maximal compact subgroup of $Z_G ( X, E, Y)$, it is immediate that ${\mathfrak k}$ is 
a real form of the complex reductive Lie algebra $Z_{\mathfrak g} ( X, E, Y)$, and moreover, 
${\mathfrak z}({\mathfrak k})$ is a real (sub)form of the center ${\mathfrak c}$ of
$Z_{\mathfrak g} ( X, E, Y)$. Since $K$ is Zariski dense in $Z_G ( X, E, Y)$, the invariant part
${\mathfrak z} ({\mathfrak k})^{K/K^0}$ is a real form of ${\mathfrak c}^{Z_G (X,E,Y)/ Z_G (X,E,Y)^0}$.
Thus, we have
$$ 
H^2 ( {\mathcal O}_X, \r) \otimes_\r \c \,=\, [{\mathfrak z}
({\mathfrak k})^{K/K^0}] \otimes_\r \c \,=\, {\mathfrak c}^{Z_G (X,E,Y)/ Z_G 
(X,E,Y)^0}\, ,
$$
completing the proof of the theorem.
\end{proof}

We shall now recall certain standard notation associated to nilpotent orbits in simple Lie algebras.
For a positive integer $n$, let ${\mathcal P}(n)$ denote the set of partitions of $n$; so
${\mathcal P}(n)$ consists of finite sequence of integers $[ d_1, \cdots , d_k ]$ with 
$\sum_{l=1}^{k} d_l = n$, $0 < d_i$ and $d_i \leq d_{i+1}$ for all $i$. For a partition
${\bf d}\,= \,[ d_1, \cdots , d_k ]\, \in\, {\mathcal P}(n)$, denote by
$|{\bf d}|$ the number of distinct $d_i$. By Theorem 5.1.1 of \cite{Co-M}, if 
$G$ is a complex simple Lie group with Lie algebra
${\mathfrak s}{\mathfrak l}_n$, then the nilpotent orbits are in bijection with ${\mathcal P}(n)$.
For any ${\bf d} \in {\mathcal P}(n)$, the corresponding nilpotent orbits in ${\mathfrak s}{\mathfrak l}_n$
will be denoted by ${\mathcal O}_{\bf d}$. 

For a group $H$, we denote by $H^n_\Delta$ 
the diagonally embedded copy of $H$ in the $n$-fold direct product $H^n$. Let $H_i \subset {\rm GL}_{l_i} $ be a matrix
subgroup, for $ 1 \leq i \leq m$. We then define the subgroup
$$
S( \prod_i H_i) \, :=\, \{ (h_1, \cdots, h_m) \,\in\, \prod_{i=1}^m H_i \, \mid \, \prod_i \det h_i =1\}
\, \subset\, \prod_{i=1}^m H_i\, .
$$

For a positive integer $m$, the cardinality of the set 
$\{j\, \mid \, d_j \,=\, m\}$ is denoted by $r_m ({\bf d})$. Define
$$
{\mathcal P}_1(n) \,:=\, \{{\bf d }\,\in\, {\mathcal P}(n) \,\mid\, r_m ({\bf d}) \,\, {\rm is} \,\, {\rm even}
 \,\, {\rm for} \,\, {\rm all}\,\, {\rm even}\,\, {\rm integers} \,\, m\}\, ,
$$
and
$$
{\mathcal P}_{-1}(2n) \,:=\, \{{\bf d } \,\in\, {\mathcal P}(2n) \,\mid
\, r_m ({\bf d}) \,\, {\rm is} \,\, {\rm even}
 \,\, {\rm for} \,\, {\rm all}\,\, {\rm odd}\,\, {\rm integers} \,\, m\}\, .
$$

\begin{theorem}[Springer-Steinberg]\label{springer-steinberg}
Let $G$ be a simple simply connected complex group of classical type with Lie
algebra ${\mathfrak g}$.
\begin{enumerate}
\item Assume that ${\mathfrak g}= {\mathfrak s}{\mathfrak l}_n$. Take any
${\bf d} \,=\, [d_1, \cdots, d_k]\,\in\,{\mathcal P}(n)$; let ${\mathcal O}_{\bf d}$ be the corresponding
nilpotent orbit. Take any $X \in {\mathcal O}_{\bf d}$. Then for any 
${\mathfrak{s} \mathfrak{l}}_2$-triple of the form
$(X,E,Y)$ in ${\mathfrak g}$,
$$
Z_G (X,E,Y) \,=\, S ( \prod_i ({\rm GL}_{r_i ({\bf d})})^i_\Delta)\, .
$$ 

\item Assume that ${\mathfrak g}= {\mathfrak s}{\mathfrak p}_{2n}$. Take any
${\bf d} \,= \,[d_1, \cdots, d_k] \,\in\, {\mathcal P}_{-1}(2n)$; let ${\mathcal O}_{\bf d}$ be the corresponding
nilpotent orbit. Fix any $X \in {\mathcal O}_{\bf d}$.
Then for any ${\mathfrak{s} \mathfrak{l}}_2$-triple
of the form $(X,E,Y)$ in ${\mathfrak g}$,
$$Z_G (X,E,Y) \,=\, \prod_{i \, {\rm odd}} ({\rm Sp}_{r_i ({\bf 
d})})^i_\Delta
\times \prod_{i \, {\rm even}} ({\rm O}_{r_i ({\bf d})})^i_\Delta\, .
$$ 

\item Assume that ${\mathfrak g}= {\mathfrak s}{\mathfrak o}_{n}$. Take any
${\bf d}\,=\,[d_1, \cdots, d_k] \,\in\,{\mathcal P}_1 (n)$; let ${\mathcal O}_{\bf d}$ be the corresponding
nilpotent orbit. Let $X \in {\mathcal O}_{\bf d}$.
Then for any ${\mathfrak{s} \mathfrak{l}}_2$-triple of the form
$(X,E,Y)$ in ${\mathfrak g}$,
$$Z_G (X,E,Y)\, = \, \, {\rm double} \,\, {\rm cover} \,\, {\rm of} \,\,
 S ( \prod_{i \, {\rm even}} ({\rm Sp}_{r_i
({\bf d})})^i_\Delta \times \prod_{i \, {\rm odd}} ({\rm O}_{r_i ({\bf d})})^i_\Delta)\, .
$$
\end{enumerate}
\end{theorem}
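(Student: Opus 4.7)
The plan is to derive the centralizer $Z_G(X,E,Y)$ from the representation theory of the ${\mathfrak{sl}}_2$-triple acting on the natural (standard) representation of the classical group. Let $V$ denote the defining representation: $V = {\mathbb C}^n$ for ${\mathfrak{sl}}_n$, $V = {\mathbb C}^{2n}$ for ${\mathfrak{sp}}_{2n}$, and $V = {\mathbb C}^n$ for ${\mathfrak{so}}_n$. The triple $(X,E,Y)$ makes $V$ into a finite dimensional ${\mathfrak{sl}}_2$-module, and the partition ${\bf d} = [d_1,\dots,d_k]$ records the Jordan block sizes of $X$ on $V$, so that as an ${\mathfrak{sl}}_2$-module one has the isotypic decomposition $V \cong \bigoplus_{i\geq 1} V_i \otimes W_i$, where $V_i$ is the unique irreducible ${\mathfrak{sl}}_2$-module of dimension $i$ and $W_i$ is a multiplicity space with $\dim W_i = r_i({\bf d})$.

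Step one is Type A. The centralizer of the ${\mathfrak{sl}}_2$-action inside ${\rm GL}(V)$ acts only on the multiplicity spaces and thus equals $\prod_i {\rm GL}(W_i)$; explicitly, $g_i \in {\rm GL}(W_i)$ embeds into ${\rm GL}(V)$ as ${\rm id}_{V_i}\otimes g_i$, whose determinant on $V_i\otimes W_i$ is $(\det g_i)^i$. Intersecting with ${\rm SL}(V)$ imposes the single relation $\prod_i(\det g_i)^i = 1$, producing exactly $S(\prod_i ({\rm GL}_{r_i({\bf d})})^i_\Delta)$. One then checks that $Z_G(X,E,Y)$ equals the centralizer of the whole ${\mathfrak{sl}}_2$-action, which follows because $(E,Y)$ is determined by $X$ up to the action of the unipotent radical of $Z_G(X)$, and $Z_G(X,E,Y)$ is a Levi factor of $Z_G(X)$ (Lemma 3.7.3 of \cite{Co-M}).

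Step two is the symplectic and orthogonal cases. Now $V$ carries a $G$-invariant nondegenerate bilinear form $\langle\cdot,\cdot\rangle$, symplectic in type C and symmetric in types B, D, and the triple $(X,E,Y)$ acts by skew operators. Each irreducible $V_i$ carries (up to scalar) a unique ${\mathfrak{sl}}_2$-invariant nondegenerate form, which is symmetric when $i$ is odd and skew when $i$ is even (this is visible from the weight pairing $V_i(\lambda)\otimes V_i(-\lambda)\to {\mathbb C}$). Consequently, on the isotypic component $V_i\otimes W_i$, the form on $V$ restricts to the tensor product of the form on $V_i$ with an induced nondegenerate form on $W_i$ whose symmetry type is determined so that the overall form has the correct symmetry. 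In type C (overall skew), this forces $W_i$ to be symplectic for $i$ odd and orthogonal for $i$ even, yielding $\prod_{i\text{ odd}}{\rm Sp}(W_i)\times\prod_{i\text{ even}}{\rm O}(W_i)$. In type B/D (overall symmetric), the parities swap, giving $\prod_{i\text{ even}}{\rm Sp}(W_i)\times\prod_{i\text{ odd}}{\rm O}(W_i)$; intersecting with ${\rm SO}(V)$ imposes the determinant-one condition, giving the $S(\cdots)$ factor.

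The main obstacle will be the last step: passing from the centralizer in the adjoint (or ${\rm SO}$) group to the centralizer in the simply connected cover, which is what accounts for the ``double cover'' assertion in the ${\mathfrak{so}}_n$ case. Concretely, one must show that the preimage in ${\rm Spin}_n$ of the identity component of the above $S(\cdots)$ subgroup of ${\rm SO}(V)$ does not split as a direct product, and identify it as a genuine double cover. This is handled by a case-by-case analysis of the restriction of the spin cover, using that $\pi_1(S(\cdots))$ maps nontrivially into $\pi_1({\rm SO}(V)) = {\mathbb Z}/2{\mathbb Z}$ precisely because the diagonal embedding $(g_i)\mapsto\bigoplus_i{\rm id}_{V_i}\otimes g_i$ lifts only after taking a double cover. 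Modulo this last point (for which one may cite \cite{Co-M}), the remainder of the proof reduces to the linear-algebraic manipulations sketched above.
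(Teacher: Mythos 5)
The paper does not actually prove this statement: it is quoted as the classical Springer--Steinberg theorem, and the entire ``proof'' consists of the citation to Theorem 6.1.3 of \cite{Co-M}. What you have written is, in essence, a reconstruction of the standard argument behind that citation: decompose the defining representation $V$ into ${\mathfrak s}{\mathfrak l}_2$-isotypic pieces $V_i\otimes W_i$ with $\dim W_i = r_i({\bf d})$, identify the centralizer of the triple with $\prod_i {\rm GL}(W_i)$ by Schur's lemma, and in types B, C, D use the fact that the invariant form on $V_i$ is symmetric for $i$ odd and alternating for $i$ even to force the induced form on $W_i$ to have the complementary symmetry type; this correctly reproduces all three displayed groups, including the exponent $i$ in the determinant condition coming from $\det({\rm id}_{V_i}\otimes g_i)=(\det g_i)^i$. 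Two remarks. First, your ``one then checks that $Z_G(X,E,Y)$ equals the centralizer of the whole ${\mathfrak s}{\mathfrak l}_2$-action'' is a non-issue: with the paper's convention, $Z_G(X,E,Y)$ is by definition the simultaneous fixer of $X$, $E$, $Y$ under ${\rm Ad}$, hence literally the centralizer of the subalgebra they span, so the detour through the uniqueness of the triple and the Levi-factor lemma is unnecessary (and as written it is not really a proof of anything). Second, the one genuinely nontrivial point your sketch does not establish is the ``double cover'' assertion in part (3), i.e., the identification of the preimage in ${\rm Spin}_n$ of $S(\prod_{i\,{\rm even}}({\rm Sp}_{r_i})^i_\Delta\times\prod_{i\,{\rm odd}}({\rm O}_{r_i})^i_\Delta)$ as a connected-in-the-relevant-sense nonsplit double cover; you defer this to \cite{Co-M}, which is exactly what the paper does for the whole theorem, so relative to the paper this is not a loss. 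In short: your route is the standard proof underlying the paper's citation, correct modulo the spin-cover step, whereas the paper simply cites the result.
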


See Theorem 6.1.3 of \cite{Co-M} for Theorem \ref{springer-steinberg}.

\begin{theorem}\label{a_n}
Let $G$ be a complex simple group with Lie algebra ${\mathfrak s}{\mathfrak l}_n$.
For any ${\bf d} \,\in\, {\mathcal P}(n)$,
$$
\dim H^2 ({\mathcal O}_{\bf d}, \r)\, =\, |{\bf d}|-1\, .
$$
\end{theorem}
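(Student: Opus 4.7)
The plan is to combine Theorem \ref{nilpotent} with the Springer--Steinberg description of the reductive centralizer in Theorem \ref{springer-steinberg}(1). The trivial partition ${\bf d} = [1,\ldots,1]$ corresponds to $X=0$, giving a one-point orbit with both sides of the equality equal to zero, so we may assume $X \neq 0$; moreover, Corollary \ref{cor1} allows us to assume $G = {\rm SL}_n$. Let $i_1 < \cdots < i_s$ be the distinct parts of ${\bf d}$, so that $s = |{\bf d}|$ and $\sum_j i_j\, r_{i_j}({\bf d}) = n$. Theorem \ref{springer-steinberg}(1) realizes $Z_G(X,E,Y)$ as the subgroup
$$
\Bigl\{ (g_1,\ldots,g_s) \in \prod_{j=1}^{s} {\rm GL}_{r_{i_j}({\bf d})} \,:\, \prod_j (\det g_j)^{i_j} = 1 \Bigr\},
$$
where the tuple $(g_1,\ldots,g_s)$ is interpreted as the block-diagonal matrix in ${\rm SL}_n$ obtained by placing $i_j$ copies of $g_j$ along the diagonal.

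The first step is the linear-algebraic computation of ${\mathfrak c}$. The Lie algebra $Z_{\mathfrak g}(X,E,Y)$ consists of tuples $(A_1,\ldots,A_s) \in \prod_j {\mathfrak{gl}}_{r_{i_j}({\bf d})}$ satisfying $\sum_j i_j\, {\rm tr}(A_j) = 0$, and its center ${\mathfrak c}$ is the set of scalar tuples $(a_1 I,\ldots,a_s I)$ subject to the single linear relation $\sum_j i_j\, r_{i_j}({\bf d})\, a_j = 0$. Because $\sum_j i_j\, r_{i_j}({\bf d}) = n \neq 0$, this relation cuts out a hyperplane in $\c^s$, and hence $\dim_\c {\mathfrak c} = s - 1 = |{\bf d}| - 1$.

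The second (and main) step is to show that the finite group $Z_G(X,E,Y)/Z_G(X,E,Y)^0$ acts trivially on ${\mathfrak c}$. The subgroup $\prod_j {\rm SL}_{r_{i_j}({\bf d})}$ is a connected subgroup of $Z_G(X,E,Y)$ and hence lies in $Z_G(X,E,Y)^0$; moreover, any $(g_1,\ldots,g_s) \in Z_G(X,E,Y)$ can be written, after choosing $r_{i_j}({\bf d})$-th roots of $\det g_j$, as a product of an element of $\prod_j {\rm SL}_{r_{i_j}({\bf d})}$ and a central scalar tuple $(\lambda_1 I,\ldots,\lambda_s I)$ with $\prod_j \lambda_j^{i_j r_{i_j}({\bf d})} = 1$. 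Consequently every connected component of $Z_G(X,E,Y)$ meets the central torus of $\prod_j {\rm GL}_{r_{i_j}({\bf d})}$, whose elements are central in $Z_G(X,E,Y)$ and therefore act trivially under the adjoint action on $Z_{\mathfrak g}(X,E,Y)$. Thus ${\mathfrak c}^{Z_G(X,E,Y)/Z_G(X,E,Y)^0} = {\mathfrak c}$, and Theorem \ref{nilpotent} gives $\dim_\r H^2({\mathcal O}_{\bf d},\r) = \dim_\c {\mathfrak c} = |{\bf d}| - 1$. The only delicate point in the plan is identifying central representatives for the components of $Z_G(X,E,Y)$; the dimension count of ${\mathfrak c}$ is a straightforward trace computation.
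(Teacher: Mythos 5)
Your proposal is correct and follows essentially the same route as the paper's proof: Theorem \ref{nilpotent} combined with the Springer--Steinberg description of $Z_G(X,E,Y)$, the trace computation identifying the center with the hyperplane $\sum_j i_j r_{i_j}({\bf d})a_j=0$ in $\c^{|{\bf d}|}$, and the observation that the component group acts trivially because every component is represented by a central scalar tuple (the paper phrases this last point via the sandwich $[L,L]\subset {\bf H}\subset L$ with $L=\prod_j {\rm GL}_{r_{i_j}({\bf d})}$, which is the same fact). One small quibble: the reduction to $G={\rm SL}_n$ is better justified by noting that the adjoint orbit is the same subset of ${\mathfrak s}{\mathfrak l}_n$ for every connected $G$ with this Lie algebra (the adjoint action factors through the adjoint group), rather than by Corollary \ref{cor1}, whose hypotheses require the \emph{same} closed subgroup $H$ embedded in both ambient groups, whereas here the stabilizer $Z_G(X)$ changes with the choice of $G$.
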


\begin{proof}
We may assume that $G$ is simply connected. Let $| {\bf d}| \,=\, \alpha$.
Fix an element $X \,\in\, {\mathcal O}_{\bf d}$.
For any ${\mathfrak s} {\mathfrak l}_2$-triple
$(X,E,Y)$ in ${\mathfrak g}$, using Theorem \ref{springer-steinberg}, we have
$$
Z_G (X,E,Y) \,=\,S ( \prod_{i=1}^{\alpha} ({\rm GL}_{r_i ({\bf 
d})})^i_\Delta)\, .
$$ 
Define
$$
{\bf H} \,:=\, \{ (A_1, \cdots A_\alpha) \,\in\, \prod_{i=1}^{\alpha} {\rm GL}_{r_i ({\bf d})} \,
\mid\, \prod_ {i=1}^{\alpha} (\det A_i)^i =1\}\, ,
$$ 
and let ${\mathfrak h}$ be the Lie algebra of ${\bf H}$.
Clearly, ${\bf H}\,=\, S ( \prod_{i=1}^{\alpha} ({\rm GL}_{r_i ({\bf 
d})})^i_\Delta)$. Define
$L \,:= \,\prod_{i=1}^{\alpha} {\rm GL}_{r_i ({\bf d})}$, and let
${\mathfrak l}$ be the Lie algebra of $L$. Then $[L\, ,L] \,=\, 
\prod_{i=1}^{\alpha} SL_{r_i ({\bf d})}$, and $$[L\, ,L] \,\subset \,{\bf H} 
\,\subset
\, L\, .$$ Consequently, 
$[{\mathfrak l},{\mathfrak l}] \,\subset\, {\mathfrak h} \,\subset \,{\mathfrak l}$.
Hence 
$${\mathfrak z}({\mathfrak h}) \,=\, {\mathfrak h} \cap {\mathfrak 
z}({\mathfrak l}),\,~ [{\mathfrak l},{\mathfrak l}] \,=\,
[{\mathfrak h},{\mathfrak h}] ~\, \text{~and~}\,~ 
{\mathfrak h} \,=\, ({\mathfrak h} \cap {\mathfrak z}({\mathfrak 
l}))\oplus [{\mathfrak l},{\mathfrak l}]\, .$$
Now adjoint action of $L$ fixes all of ${\mathfrak z}({\mathfrak l})$. So the adjoint action of ${\bf H}$ on
${\mathfrak h} \cap {\mathfrak z}({\mathfrak l})={\mathfrak z}({\mathfrak h})$ is trivial.
In particular, ${\mathfrak z}({\mathfrak h})^{{\bf H}/ {\bf H}^0} = {\mathfrak z}({\mathfrak h})$. 
Thus, appealing to Theorem
\ref{nilpotent}, it remains to find the dimension of ${\mathfrak z}({\mathfrak h})$. As 
${\mathfrak z}({\mathfrak h}) = {\mathfrak h} \cap {\mathfrak z}({\mathfrak l})$, it is easy to check that
$$
{\mathfrak z}({\mathfrak h}) \,=\,
\{(x_1, \cdots x_\alpha) \,\in\, \c^\alpha \,\mid\, \sum_{i=1}^{\alpha} i r_i ({\bf d}) x_i \,=\,0 \}\, .
$$
Thus $\dim {\mathfrak z}({\mathfrak h}) = \alpha -1 = | {\bf d}|-1$.
This completes the proof.
\end{proof}

It follows from Theorem 5.1.6 of \cite{Co-M} that the nilpotent orbits in 
${\mathfrak s}{\mathfrak p}_{2n}$ are in bijection with ${\mathcal 
P}_{-1}(2n)$. As before, for any ${\bf d} \in {\mathcal P}_{-1} (2n)$, 
the corresponding nilpotent orbits in 
${\mathfrak s}{\mathfrak p}_{2n}$ will be denoted by ${\mathcal O}_{\bf d}$. 

\begin{theorem}\label{c_n}
Let $G$ be a complex simple group with Lie algebra ${\mathfrak s}{\mathfrak p}_{2n}$.
Let $X$ be a nilpotent element in ${\mathfrak s}{\mathfrak p}_{2n}$. Then 
$$
H^2 ({\mathcal O}_X, \r) \,=\, 0\, .
$$
\end{theorem}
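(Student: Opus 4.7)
The plan is to combine Theorem \ref{nilpotent} with Theorem \ref{springer-steinberg}(2) and carry out an explicit computation of the relevant fixed subspace. Since $H^2(\mathcal{O}_X,\r)$ vanishes if and only if its complexification does, Theorem \ref{nilpotent} reduces the task to showing that $\mathfrak{c}^{Z_G(X,E,Y)/Z_G(X,E,Y)^0}=0$, where $(X,E,Y)$ is any $\mathfrak{sl}_2$-triple with $X$ as nil-positive element and $\mathfrak{c}$ denotes the center of the Lie algebra of $Z_G(X,E,Y)$. As in Theorem \ref{a_n}, one may assume $G$ is simply connected.

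Let $\mathbf{d}\in \mathcal{P}_{-1}(2n)$ be the partition parametrizing the orbit of $X$. By Theorem \ref{springer-steinberg}(2),
$$
Z_G(X,E,Y)\,=\,\prod_{i\text{ odd}}(\mathrm{Sp}_{r_i(\mathbf{d})})^i_\Delta\,\times\,\prod_{i\text{ even}}(\mathrm{O}_{r_i(\mathbf{d})})^i_\Delta\, ,
$$
and passing to Lie algebras one gets a direct sum of copies of $\mathfrak{sp}_{r_i(\mathbf{d})}$ (odd $i$) and $\mathfrak{o}_{r_i(\mathbf{d})}$ (even $i$), each embedded diagonally. Since $\mathfrak{z}(\mathfrak{sp}_m)=0$ for every $m$ and $\mathfrak{z}(\mathfrak{o}_m)=0$ for every $m\neq 2$, while $\mathfrak{o}_2$ is one-dimensional and abelian, the center reduces to
$$
\mathfrak{c}\,=\,\bigoplus_{\substack{i\text{ even}\\ r_i(\mathbf{d})=2}}\mathfrak{o}_2\, .
$$

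Finally, I would compute the action of $Z_G(X,E,Y)/Z_G(X,E,Y)^0$ on $\mathfrak{c}$. The symplectic factors are connected, while each nontrivial orthogonal factor contributes a $\mathbb{Z}/2\mathbb{Z}$ via $\mathrm{O}_{r_i(\mathbf{d})}/\mathrm{SO}_{r_i(\mathbf{d})}$, represented by a reflection. On a summand $\mathfrak{o}_2$ coming from an index $i$ with $r_i(\mathbf{d})=2$, the factors attached to other indices act trivially, while the corresponding $\mathrm{O}_2/\mathrm{SO}_2$ acts by $-1$: an elementary $2\times 2$ computation shows that for any reflection $s\in\mathrm{O}_2$ and a generator $J$ of $\mathfrak{o}_2$ one has $sJs^{-1}=-J$. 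Hence each summand $\mathfrak{o}_2$ contributes nothing to the fixed subspace, so $\mathfrak{c}^{Z_G(X,E,Y)/Z_G(X,E,Y)^0}=0$. The only step that is not bookkeeping is the last one: the entire theorem hinges on the sign in $sJs^{-1}=-J$, since a trivial action would instead produce a nonzero $H^2$; fortunately this sign is immediate from the standard description of $\mathrm{O}_2$.
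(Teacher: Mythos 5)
Your proposal is correct and follows essentially the same route as the paper: reduce via Theorem \ref{nilpotent}, identify $Z_G(X,E,Y)$ as a product of symplectic and orthogonal groups via Theorem \ref{springer-steinberg}(2), observe that the center is a sum of $\mathfrak{so}_2$'s coming from even indices with $r_i(\mathbf{d})=2$, and kill the fixed subspace using the nontrivial action of $\mathrm{O}_2/\mathrm{SO}_2$ on $\mathfrak{so}_2$. Your explicit verification of the sign $sJs^{-1}=-J$ is the same key point the paper invokes (stated there simply as the non-triviality of the $\mathrm{O}_2/\mathrm{SO}_2$ action).
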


\begin{proof} 
We may assume that $G$ is simply connected. Take any
$${\bf d} \,=\, [d_1, \cdots, d_k] \,\in\, {\mathcal P}_{-1}(2n)\, ,$$ and let
${\mathcal O}_{\bf d}$ be the corresponding
nilpotent orbit in ${\mathfrak s}{\mathfrak p}_{2n}$. Fix an element $X \,\in\, {\mathcal O}_{\bf d}$.
For any ${\mathfrak s} {\mathfrak l}_2$-triple
$(X,E,Y)$ in ${\mathfrak g}$, using Theorem \ref{springer-steinberg}, we have
$$
Z_G (X,E,Y) \,=\, \prod_{i \, {\rm odd}} ({\rm Sp}_{r_i
({\bf d})})^i_\Delta \times \prod_{i \, {\rm even}} ({\rm O}_{r_i ({\bf d})})^i_\Delta\, .
$$ 
Clearly 
$$\prod_{i \, {\rm odd}} ({\rm Sp}_{r_i ({\bf d})})^i_\Delta \times \prod_{i \, 
{\rm even}} ({\rm O}_{r_i ({\bf d})})^i_\Delta\,= \, \prod_{i \, {\rm odd}} {\rm 
Sp}_{r_i ({\bf d})} \times \prod_{i \, {\rm even}} {\rm O}_{r_i ({\bf d})}\, .
$$ Define $${\bf H} \,:=\,\prod_{i \, {\rm odd}} {\rm Sp}_{r_i ({\bf d})}
\times \prod_{i \, {\rm even}} {\rm O}_{r_i ({\bf d})}\, ,$$ and let ${\mathfrak h}$
be the Lie algebra of ${\bf H}$. Observe that the
connected component of ${\bf H}$ is semisimple
unless there is an even integer $i$ with $r_i ({\bf d})\,=\,2$. So it 
only remains to consider the case
where
$$E \,:= \,\{i ~ \,{\rm even} \, \mid \, r_i ({\bf d})\,=\, 2\}$$ is non-empty. 
Now, in this case,
$$
 {\mathfrak z}({\mathfrak h}) \,= \, \bigoplus_{i \in E} {\mathfrak 
s}{\mathfrak o}_2\, .
$$
As ${\rm O}_2/ {\rm SO}_2 \,=\, {\mathbb Z}/2{\mathbb Z}$
acts non-trivially on ${\mathfrak s}{\mathfrak o}_2$, it follows that 
$[{\mathfrak z}({\mathfrak h})]^{{\bf H}/ {\bf H}^0} \,=\,0$. Now the proof is completed by Theorem
\ref{nilpotent}.
\end{proof}

The nilpotent orbits in ${\mathfrak s}{\mathfrak o}_n$
are in bijection with ${\mathcal P}_1(n)$ (see Theorem 5.1.6 of \cite{Co-M}). 
As before, for any ${\bf d} \in {\mathcal P}_1(n)$, the corresponding nilpotent orbits in 
${\mathfrak s}{\mathfrak o}_n$ will be denoted by ${\mathcal O}_{\bf d}$. 

\begin{theorem}\label{bd_n}
Let $G$ be a complex simple Lie group with Lie algebra ${\mathfrak s}{\mathfrak 
o}_n$.
take any ${\bf d} \,\in \,{\mathcal P}_1(n)$.
\begin{enumerate}
\item Assume that ${\bf d}$ is such that there an odd integer $m$ with $r_m ({\bf d})\,=\, 2$,
and $r_l ({\bf d})\,=\,0$ for all odd integers $l \,\neq\, m$. Then 
$$
\dim H^2 ({\mathcal O}_{\bf d}, \r) \,=\,1\, .
$$

\item If ${\bf d}$ does not satisfy the above condition, then
$$
\dim H^2 ({\mathcal O}_{\bf d}, \r) \,=\,0\, .
$$
\end{enumerate}
\end{theorem}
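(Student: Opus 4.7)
The plan is to apply Theorem \ref{nilpotent} combined with the third part of Theorem \ref{springer-steinberg} to reduce the computation to an elementary analysis of a centralizer's component group action on its center. Fix $X\, \in\, \mathcal{O}_{\bf d}$ with an associated $\mathfrak{sl}_2$-triple $(X,E,Y)$. By Theorem \ref{springer-steinberg}(3), $Z_G(X,E,Y)$ is a double cover of
$$
\mathbf{H}\,:=\, S\Bigl( \prod_{i\ \mathrm{even}} \mathrm{Sp}_{r_i({\bf d})} \times \prod_{i\ \mathrm{odd}} \mathrm{O}_{r_i({\bf d})}\Bigr).
$$
The kernel of the covering $Z_G(X,E,Y) \to \mathbf{H}$ is central, so the adjoint action of $Z_G(X,E,Y)$ on its Lie algebra factors through $\mathbf{H}$; moreover the covering map is surjective on $\pi_0$. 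Consequently, writing $\mathfrak{h}$ for the Lie algebra of $\mathbf{H}$ and $\mathfrak{c}$ for its center,
$$
\mathfrak{c}^{Z_G(X,E,Y)/Z_G(X,E,Y)^0}\,=\,\mathfrak{c}^{\mathbf{H}/\mathbf{H}^0}.
$$
In view of Theorem \ref{nilpotent}, the theorem reduces to identifying this fixed-point space.

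Next I would compute $\mathfrak{c}$ and $\pi_0(\mathbf{H})$ explicitly. Since $\mathfrak{sp}_{r_i}$ is semisimple for every $r_i$, and $\mathfrak{so}_{r_i}$ is semisimple for $r_i \neq 2$ while $\mathfrak{so}_2$ is one-dimensional abelian, we obtain
$$
\mathfrak{c}\,=\, \bigoplus_{i\in I^{2}_{\mathrm{odd}}} \mathfrak{so}_{2}\, , \qquad I^{2}_{\mathrm{odd}}\,:=\,\{i\ \mathrm{odd}\,:\,r_i({\bf d})=2\}.
$$
Let $I_{\mathrm{odd}}\,:=\,\{i\ \mathrm{odd}\,:\,r_i({\bf d})\geq 1\}$. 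Since $\mathrm{Sp}_{r_i}$ is connected and $\mathrm{O}_{r_i}$ has two components (distinguished by determinant) whenever $r_i \geq 1$, the determinant map $\prod_i \mathrm{O}_{r_i({\bf d})} \to \{\pm 1\}^{I_{\mathrm{odd}}}$ identifies the component group of the unrestricted product, and the $S$-constraint cuts out the kernel of the total-product map, giving
$$
\pi_0(\mathbf{H})\,=\,\ker\Bigl((\mathbb{Z}/2\mathbb{Z})^{I_{\mathrm{odd}}}\,\xrightarrow{\ \mathrm{sum}\ }\,\mathbb{Z}/2\mathbb{Z}\Bigr).
$$
A standard calculation (reducing to $2\times 2$ matrices) shows that for $i\in I^{2}_{\mathrm{odd}}$, the nontrivial component of $\mathrm{O}_{2}$ acts on the summand $\mathfrak{so}_2$ by $-1$, so an element $(\epsilon_j)\in \pi_0(\mathbf{H})$ acts on $\mathfrak{c}=\bigoplus_{i\in I^{2}_{\mathrm{odd}}}\mathfrak{so}_2$ by multiplication by $\epsilon_i$ on the $i$-th summand.

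The proof then finishes with a short case analysis on $|I_{\mathrm{odd}}|$. If $I^{2}_{\mathrm{odd}}=\varnothing$ then $\mathfrak{c}=0$ and the cohomology vanishes. If $|I_{\mathrm{odd}}|=1$ and its unique element $m$ satisfies $r_m({\bf d})=2$, then $\pi_0(\mathbf{H})$ is trivial (the sum map $\mathbb{Z}/2\mathbb{Z}\to\mathbb{Z}/2\mathbb{Z}$ is the identity), so $\mathfrak{c}^{\mathbf{H}/\mathbf{H}^0}=\mathfrak{so}_2$ has dimension $1$, and by Theorem \ref{nilpotent} (together with the isomorphism $[\mathfrak{z}(\mathfrak{k})^*]^{K/K^0}\cong [\mathfrak{z}(\mathfrak{k})^{K/K^0}]^*$ used in its proof) we obtain $\dim H^2(\mathcal{O}_{\bf d},\mathbb{R})=1$. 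This is precisely case (1). Otherwise, $|I_{\mathrm{odd}}|\geq 2$ and $I^{2}_{\mathrm{odd}}\neq\varnothing$: for any $i\in I^{2}_{\mathrm{odd}}$ and any $j\in I_{\mathrm{odd}}\setminus\{i\}$, the tuple with $\epsilon_i=\epsilon_j=-1$ and all other coordinates $+1$ lies in $\pi_0(\mathbf{H})$ and acts by $-1$ on the $i$-th summand of $\mathfrak{c}$, forcing $x_i=0$; iterating over $i$ yields $\mathfrak{c}^{\mathbf{H}/\mathbf{H}^0}=0$, which is case (2).

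The main obstacle is the bookkeeping of components of $\mathbf{H}$, since the $S(\cdot)$ constraint couples the $\mathrm{O}_{r_i}$ factors and one must verify carefully both (i) that the double cover can be ignored when computing fixed points on $\mathfrak{c}$, and (ii) that for $|I_{\mathrm{odd}}|\geq 2$ one can always realize the sign flip on a prescribed $r_i=2$ coordinate within the kernel of the total determinant. Everything else is structural input from Theorems \ref{nilpotent} and \ref{springer-steinberg}.
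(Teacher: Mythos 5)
Your proposal is correct and follows essentially the same route as the paper: reduce via Theorem \ref{nilpotent} and Theorem \ref{springer-steinberg}(3), identify ${\mathfrak z}({\mathfrak h})$ as a sum of copies of ${\mathfrak s}{\mathfrak o}_2$ indexed by the odd parts of multiplicity $2$, and kill or retain each summand according to whether a determinant $-1$ element can be realized in that ${\rm O}_2$ factor inside the $S(\cdot)$-subgroup. The only (harmless) difference is that you keep the simply connected group and argue explicitly that the central double cover does not affect the fixed points on the center, whereas the paper simply takes $G={\rm SO}_n$; your description of $\pi_0({\bf H})$ as the kernel of the sum map $({\mathbb Z}/2{\mathbb Z})^{I_{\rm odd}}\to{\mathbb Z}/2{\mathbb Z}$ is a clean repackaging of the paper's case analysis.
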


\begin{proof}
We may assume that $G\,=\, {\rm SO}_n$.
Let ${\bf d}\,=\, [d_1, \cdots, d_k]\,\in\,{\mathcal P}_1 (n)$, and let 
${\mathcal O}_{\bf d}$
be the corresponding nilpotent orbit. Fix $X \,\in\, {\mathcal O}_{\bf d}$.
For any ${\mathfrak s} {\mathfrak l}_2$-triple
$(X,E,Y)$ in ${\mathfrak g}$, using Theorem \ref{springer-steinberg}, we have
$$
Z_G (X,E,Y) \,=\,
S ( \prod_{i \, {\rm even}} ({\rm Sp}_{r_i ({\bf d})})^i_\Delta \times \prod_{i \, {\rm odd}}
({\rm O}_{r_i ({\bf d})})^i_\Delta)\, .
$$

Further note that 
$$S ( \prod_{i \, {\rm even}} ({\rm Sp}_{r_i ({\bf d})})^i_\Delta \times \prod_{i \, {\rm odd}}
({\rm O}_{r_i ({\bf d})})^i_\Delta)\,=\,
\prod_{i \, {\rm even}} {\rm Sp}_{r_i ({\bf d})} \times S(\prod_{i \, {\rm odd}}
({\rm O}_{r_i ({\bf d})})^i_\Delta))\, .
$$

Define ${\bf H} := \prod_{i \, {\rm even}} {\rm Sp}_{r_i ({\bf d})} \times
S(\prod_{i \, {\rm odd}} ({\rm O}_{r_i ({\bf d})})^i_\Delta)$; let
${\mathfrak h}$ be the Lie algebra of ${\bf H}$.

{\it Proof of (1):} Assume that there an odd integer $m$ with $r_m ({\bf d})=2$
and $r_l ({\bf d})=0$, for all odd integers $l \neq m$. Hence
$$
{\bf H} \,= \,
\prod_{i \, {\rm even}} {\rm Sp}_{r_i ({\bf d})} \times S( ({\rm O}_2)^m_\Delta))
\,=\, \prod_{i \, {\rm even}} {\rm Sp}_{r_i ({\bf d})} \times {\rm SO}_2\, .
$$

Clearly $Z({\bf H}) \,=\, {\rm SO}_2$, and ${\mathfrak z}({\mathfrak h})= 
{\mathfrak s}{\mathfrak o}_2$. Hence
$[{\mathfrak z}({\mathfrak h})]^{{\bf H}/ {\bf H}^0} \,=\, {\mathfrak 
z}({\mathfrak h})\,=\,
{\mathfrak s}{\mathfrak o}_2$. Thus $$\dim [{\mathfrak z}({\mathfrak h})]^{{\bf H}/ {\bf H}^0} \,=\,1\, .$$
Now the proof follows from Theorem \ref{nilpotent}.  

{\it Proof of (2):} Part (2) is broken into two sub-cases.

{\it Case 1:} Assume that $r_m ({\bf d})\,\neq\, 2$ for all odd integer $m$.
Then we have
$$
{\bf H} \,=\, 
\prod_{i \, {\rm even}}
{\rm Sp}_{r_i ({\bf d})} \times S(\prod_{i \, {\rm odd}, \, r_i ({\bf d})=1 }
({\rm O}_{r_i ({\bf d})})^i_\Delta
\times \prod_{i \, {\rm odd}, \, r_i ({\bf d}) >2 } ({\rm O}_{r_i ({\bf d})})^i_\Delta)\, .
$$
Observe that the connected component ${\bf H}^0$ is semisimple; hence ${\mathfrak z}({\mathfrak h})\,=\,0$.
We use Theorem
\ref{nilpotent} to complete the proof. 

{\it Case 2:} Define ${\mathcal F}\, := \, \{i \,\, {\rm is} \,\, {\rm odd}\, \mid\, 
 r_m ({\bf d})\,=\,2\}$ and
$$
{\mathcal F}^* \,:=\, \{i \,\, {\rm is} \,\, {\rm odd}\, \mid 
\, r_m ({\bf d})\,\neq \,2, \, \, r_m ({\bf d}) \,\neq\, 0\}\, .
$$
In this second case we assume that either $|{\mathcal F}| \,\geq\, 2$ or $|{\mathcal F}| \,=\,1\, \leq \, |{\mathcal F}^*|$.

Let $\beta = |{\mathcal F}|$ and $\gamma = |{\mathcal F}^*|$. We enumerate elements of ${\mathcal F}$ as ${\mathcal F} = \{ i_1, \cdots, i_\beta \}$,
and those of ${\mathcal F}^*$ as ${\mathcal F}^* = \{ j_1, \cdots, j_\gamma \}$.
Define the group $$L \,:= \, \prod_{p=1}^{\beta} {\rm O}_{r_{i_p} ({\bf d})}
\times \prod_{q=1}^{\gamma} {\rm O}_{r_{i_q} ({\bf d})}\, . $$
We identify
$S(\prod_{i \in {\mathcal F}} ({\rm O}_{r_i ({\bf d})})^i_\Delta
\times \prod_{i \in {\mathcal F}^*} ({\rm O}_{r_i ({\bf d})})^i_\Delta)$ with the subgroup $M$ of $L$ 
defined as follows:
$$
M \,:=\, \{(A_1, \cdots, A_\alpha, B_1, \cdots B_\beta)
$$
$$
\in\, \prod_{p=1}^{\beta} {\rm O}_{r_{i_p} ({\bf d})}
\times \prod_{q=1}^{\gamma} {\rm O}_{r_{i_q} ({\bf d})} \,\mid\, \prod_{p=1}^{\beta} (\det A_p)^{i_p} 
\prod_{q=1}^{\gamma} (\det B_q)^{j_q} = 1\}\, .
$$
Let ${\mathfrak m}$ be the Lie algebra of $M$.
Clearly we have
$$
{\bf H} \,=\, 
\prod_{i \, {\rm even}} {\rm Sp}_{r_i ({\bf d})} \times M\, .
$$

As ${\rm O}_{r_{i_p}({\bf d})} \,=\, {\rm O}_2$ for all $p = 1, \cdots , \alpha$,
and ${\rm O}_{r_{i_p}({\bf d})} \,\neq\,{\rm O}_2$ for all $q = 1, \cdots , \beta$,
it follows that 
$$
{\mathfrak z}({\mathfrak h}) \,=\, {\mathfrak z}({\mathfrak m}) \,=
\, \bigoplus_{p=1}^{\alpha} {\mathfrak s}{\mathfrak o}_2\, .
$$

Let $\pi_p : L \,\longrightarrow\, {\rm O}_{r_{i_p} ({\bf d})} \,=\, {\rm O}_2$
be the projection of $L$ onto the $p$-th factor ${\rm O}_{r_{i_p} ({\bf d})}$.
Now as either $|{\mathcal F}| \geq 2$ or $|{\mathcal F}| =1$ and $|{\mathcal F}^*| \geq 1$, it is easy to see that 
$\pi_p (M) \,=\,{\rm O}_{r_{i_p} ({\bf d})}\,=\,{\rm O}_2$ for all $p= 1, \cdots , \alpha$. 
We identify $\bigoplus_{p=1}^{\alpha} 
{\mathfrak s}{\mathfrak o}_2$ canonically with a subalgebra of ${\mathfrak m}$. 
Since ${\rm O}_2/{\rm SO}_2 \,=\, {\mathbb Z}/2{\mathbb Z}$
acts nontrivially on ${\mathfrak s}{\mathfrak o}_2$, it follows that the adjoint action of $M$ has no
nontrivial fixed points in $\bigoplus_{p=1}^{\alpha} 
{\mathfrak s}{\mathfrak o}_2$. Thus $[{\mathfrak z}({\mathfrak h})]^{{\bf H}/ {\bf H}^0} \,=\,0$. In view of
Theorem
\ref{nilpotent} this
completes the proof.
\end{proof}

We next put down some results that will facilitate investigating the 
nilpotent orbits in the exceptional Lie algebras.

\begin{lemma}\label{outer}
Let $G$ be a complex algebraic group with Lie algebra ${\mathfrak g}$, and let
$H$ be a complex semisimple algebraic subgroup of $G$ with Lie algebra
${\mathfrak h}$. Let $c \in G$ be a semisimple element such that $c H c^{-1} \,=\,
H$. Let ${\rm Ad} (c)\vert_{\mathfrak h}$ be the restriction of
${\rm Ad} (c) \,\in\, {\rm Aut} ({\mathfrak g})$.
Then ${\rm Ad} (c)\vert_{\mathfrak h} \,\in\, {\rm Ad}(H)$ if and only if there is a Cartan
subalgebra ${\mathfrak t}$ of ${\mathfrak h}$ fixed pointwise by ${\rm 
Ad}(c)$.
\end{lemma}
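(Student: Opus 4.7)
The plan is to treat the two implications separately; the ``only if'' direction is the short one, while most of the work lies in manufacturing an element of $H$ whose adjoint action matches $\mathrm{Ad}(c)|_{\mathfrak{h}}$ once a pointwise-fixed Cartan subalgebra is given.

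For the forward implication, suppose $\mathrm{Ad}(c)|_{\mathfrak{h}}\,=\,\mathrm{Ad}(h)$ for some $h\,\in\,H$. Since $c$ is semisimple in $G$, the operator $\mathrm{Ad}(c)$ is semisimple on $\mathfrak{g}$ and hence on the invariant subspace $\mathfrak{h}$. Taking the Jordan decomposition $h\,=\,h_s h_u$ in $H$, the factorization $\mathrm{Ad}(h)\,=\,\mathrm{Ad}(h_s)\mathrm{Ad}(h_u)$ is the Jordan decomposition of $\mathrm{Ad}(h)$; semisimplicity of $\mathrm{Ad}(c)|_{\mathfrak{h}}$ then forces $\mathrm{Ad}(h_u)\,=\,\mathrm{id}$, so $h_u\,\in\,Z(H)$. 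Because $H$ is semisimple, $Z(H)$ is finite, hence $h_u\,=\,e$ and $h$ is semisimple. Such $h$ sits in a maximal torus $T\,\subset\,H$, whose Lie algebra $\mathfrak{t}$ is a Cartan subalgebra of $\mathfrak{h}$, and $\mathrm{Ad}(h)$ fixes $\mathfrak{t}$ pointwise since $T$ is abelian.

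For the converse, set $\phi\,:=\,\mathrm{Ad}(c)|_{\mathfrak{h}}$, assume $\phi$ fixes a Cartan subalgebra $\mathfrak{t}\,\subset\,\mathfrak{h}$ pointwise, let $T\,\subset\,H$ be the maximal torus with $\mathrm{Lie}(T)\,=\,\mathfrak{t}$, and let $\Phi$ be the root system of $(\mathfrak{h},\mathfrak{t})$. Being an automorphism, $\phi$ commutes with $\mathrm{ad}(X)$ for every $X\,\in\,\mathfrak{t}$; consequently $\phi$ preserves each one-dimensional root space $\mathfrak{h}_\alpha$ and acts there by a scalar $c_\alpha\,\in\,\mathbb{C}^{\times}$. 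Applying $\phi$ to the relation $[E_\alpha, E_\beta]\,=\,N_{\alpha,\beta}E_{\alpha+\beta}$ whenever $\alpha+\beta\,\in\,\Phi$ gives $c_\alpha c_\beta\,=\,c_{\alpha+\beta}$; using a basis of simple roots for the root lattice $Q$, an induction on height then shows that $\alpha \mapsto c_\alpha$ is the restriction to $\Phi$ of a well-defined group homomorphism $\chi \colon Q \to \mathbb{C}^{\times}$.

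To finish, I would invoke the canonical identification $\mathrm{Hom}(Q,\mathbb{C}^{\times})\,\cong\,T/Z(H)\,=\,T_{\mathrm{ad}}$, valid because $H$ is semisimple. Under it, $\chi$ determines a class $\bar t\,\in\,T_{\mathrm{ad}}$; picking any lift $t\,\in\,T$ under the surjection $T \twoheadrightarrow T_{\mathrm{ad}}$, the inner automorphism $\mathrm{Ad}(t)$ acts trivially on $\mathfrak{t}$ and by $\alpha(t)\,=\,\chi(\alpha)\,=\,c_\alpha$ on each $\mathfrak{h}_\alpha$, hence agrees with $\phi$ on all of $\mathfrak{h}$, placing $\phi$ in $\mathrm{Ad}(H)$. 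The main obstacle I anticipate is the global coherence of the scalars $\{c_\alpha\}$ into a character of the entire lattice $Q$ (rather than a mere function on $\Phi$) together with the passage to $T_{\mathrm{ad}}$ to realize this character by an honest element of $H$; when $H$ is not simply connected the finite center $Z(H)$ has to be carefully quotiented out rather than avoided.
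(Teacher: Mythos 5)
Your proof is correct and follows the same structure as the paper's: the forward direction is identical (semisimplicity of $c$ forces $h$ to be semisimple, hence to lie in a maximal torus whose Lie algebra is the desired Cartan subalgebra), while for the converse the paper simply cites Proposition~3 of Chapter~IX of Jacobson's \emph{Lie algebras}, which is precisely the statement you prove from scratch via the scalars $c_\alpha$ on the root spaces and the identification ${\rm Hom}(Q,\mathbb{C}^{\times})\cong T_{\rm ad}$. The extra details you supply --- the Jordan-decomposition argument showing $h_u$ lies in the finite center and is therefore trivial, and the height induction assembling the $c_\alpha$ into a character of the root lattice realized by a lift $t\in T$ --- are both sound.
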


\begin{proof}
Suppose ${\rm Ad} (c)\vert_{\mathfrak h} \,=\, {\rm Ad}(h)$ for some $h \,\in\, H$.
As $c$ is a semisimple element, and $H$ is a semisimple group, it follows that 
$h$ is a semisimple element in $H$. Consider a maximal torus $T$ of $H$
such that $h \in T$. Then the Lie algebra ${\mathfrak t}$ of $T$ satisfies the
condition that ${\rm Ad} (c) (x)\, =\, x$ for all $x \,\in\, {\mathfrak t}$.

Conversely, suppose ${\rm Ad} (c) (x)\, =\, x$ for all $x \in 
{\mathfrak t}$, where ${\mathfrak t}\, \subset\, \mathfrak h$ is some 
Cartan subalgebra. Then ${\rm Ad} (c)\vert_{\mathfrak h} \,\in\, {\rm 
Ad}(H)$ by Proposition 3 (in Ch. IX) of \cite{Ja}.
\end{proof}

\begin{lemma}\label{dim} Let $G$ be a complex algebraic group. 
Let $H$ be a connected complex semisimple subgroup of $G$ with Lie algebra ${\mathfrak h}$.
Let $c \in G$ be a semisimple element such that $c H c^{-1} \,=\, H$. There is a Cartan
subalgebra ${\mathfrak t}$ of ${\mathfrak h}$ such that the following two hold:
\begin{enumerate}
\item Firstly, ${\rm Ad }(c) {\mathfrak t}\,=\, {\mathfrak t}$.
There is a set of simple roots, say $\Delta$, with respect to ${\mathfrak t}$, 
such that the action of $c$
on ${\mathfrak t}^*$ keeps $\Delta$ invariant, that is, $ \lambda \circ {\rm Ad}(c)
\,\in\, \Delta$, for all $\lambda \in \Delta$.

\item Let $\Gamma \,:=\, \langle c \rangle$ denote the group generated by $c$, and let 
${\mathfrak t}^c\, \subset\, \mathfrak t$ be the fixed point set for the action of $c$.
Let $m$ be the number of distinct $\Gamma$--orbits in $\Delta$. Then 
$\dim {\mathfrak t}^c \,\geq\, m$.
\end{enumerate}
\end{lemma}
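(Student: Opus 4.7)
The plan is to first exhibit a ${\rm Ad}(c)$-stable Borel pair inside $\mathfrak{h}$, which gives (1) almost directly from the definitions, and then to exploit the dual basis of fundamental coweights in the stable Cartan to count ${\rm Ad}(c)$-fixed vectors, which gives (2).

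For (1), the key input is that since $c$ is semisimple in $G$ and normalizes $H$, the automorphism $\mathrm{Int}(c)$ of the connected reductive group $H$ is a semisimple automorphism: its differential on $\mathfrak{h}$ is diagonalizable, because ${\rm Ad}(c)$ is diagonalizable on $\mathfrak{g}$. Appealing to Steinberg's theorem, every semisimple automorphism of a connected reductive algebraic group over an algebraically closed field preserves some Borel subgroup together with some maximal torus of it, one obtains a Borel $B \subset H$ and a maximal torus $T \subset B$ both preserved by conjugation by $c$. Setting $\mathfrak{t} = {\rm Lie}(T)$ and $\mathfrak{b} = {\rm Lie}(B)$, the dual map $({\rm Ad}(c))^*$ on $\mathfrak{t}^*$ permutes the root system $\Phi$ of $(\mathfrak{h}, \mathfrak{t})$ and, since it preserves $\mathfrak{b}$, also preserves the positive system $\Phi^+$; hence it permutes the corresponding set $\Delta$ of simple roots, proving (1).

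For (2), since $\mathfrak{h}$ is semisimple, $\Delta = \{\alpha_1,\ldots,\alpha_r\}$ is a basis of $\mathfrak{t}^*$, and its dual basis $\{\omega_1^\vee,\ldots,\omega_r^\vee\} \subset \mathfrak{t}$ (the fundamental coweights, characterized by $\alpha_i(\omega_j^\vee) = \delta_{ij}$) is a basis of $\mathfrak{t}$. If $({\rm Ad}(c))^*(\alpha_i) = \alpha_{\sigma(i)}$ for a permutation $\sigma$ of $\{1,\ldots,r\}$, then a short dual-basis calculation gives ${\rm Ad}(c)(\omega_i^\vee) = \omega_{\sigma^{-1}(i)}^\vee$, so ${\rm Ad}(c)$ acts on $\mathfrak{t}$ as a permutation matrix in the basis $\{\omega_i^\vee\}$ with the same orbit structure as the $\Gamma$-action on $\Delta$. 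Enumerating the distinct $\Gamma$-orbits in $\Delta$ as $O_1,\ldots,O_m$, the vectors
\[
v_j \;:=\; \sum_{\alpha_i \in O_j} \omega_i^\vee, \qquad j = 1, \ldots, m,
\]
lie in $\mathfrak{t}^c$ and are linearly independent (being sums over disjoint subsets of a basis), giving $\dim \mathfrak{t}^c \geq m$.

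The main obstacle is the very first step, producing the ${\rm Ad}(c)$-stable Borel pair; this is where the semisimplicity hypothesis on $c$ is essential, since the analogous statement fails for unipotent elements. Once the pair is in place, everything else is bookkeeping in the basis of fundamental coweights.
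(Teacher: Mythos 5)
Your proof is correct and follows essentially the same route as the paper: both invoke Steinberg's theorem to produce an $\mathrm{Ad}(c)$-stable Borel--Cartan pair in $\mathfrak{h}$ (whence the permutation of $\Delta$), and your orbit-sums of fundamental coweights $v_j$ span exactly the one-dimensional subspaces $V_i$ that the paper cuts out by the linear conditions $\lambda(x)=\mu(x)$ on each orbit and $\delta(x)=0$ off it. (One minor aside: your closing remark that semisimplicity of $c$ is what makes the stable Borel pair exist is not quite accurate---Steinberg's theorem yields a stable Borel subgroup and maximal torus for any automorphism of a connected algebraic group---but this does not affect the validity of your argument.)
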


\begin{proof}
We first prove (1). By Theorem 7.5 of \cite{St}, there exists a Borel subgroup
$B$ of $G$, and maximal torus $T\,\subset\, B$, such that $cTc^{-1} \,= \,T$ and $cBc^{-1} \,=\, B$.
Let ${\mathfrak t}$ and ${\mathfrak b}$ be the Lie algebras of $T$ and $B$ respectively.
Then ${\rm Ad }(c) ({\mathfrak t})\,=\, {\mathfrak t}$, and
${\rm Ad }(c) ({\mathfrak b})\,=\, {\mathfrak b}$. Consider the root system of ${\mathfrak h}$ with respect to 
${\mathfrak t}$; let $\Delta$ be the set of simple roots in the set of positive roots
given by ${\mathfrak b}$. As
${\rm Ad }(c) ({\mathfrak b})\,=\, {\mathfrak b}$, we have $ \lambda \circ {\rm Ad}(c) \,\in\, \Delta$ 
for all $\lambda \,\in\, \Delta$.

We will now prove (2). Let $\Delta_i$, $i = 1, \cdots, m $, be the $m$ distinct orbits in $\Delta$ under the action of $\Gamma$.
For each $ i \,\in\, \{ 1, \cdots, m \}$, define 
$$
V_i \,:=\, \{ x \in {\mathfrak t} \, \mid\, \lambda (x) = \mu (x) \,\, {\rm for } \,\, {\rm all }
\,\,\lambda,\mu \in \Delta_i, \,\, {\rm and } \,\,
\delta (x) = 0 \,\, {\rm for } \,\, {\rm all } \,\, \delta \in \Delta - \Delta_i \}\, .
$$
As $\Delta$ is a basis for ${\mathfrak t}^*$, it follows
that $V_i \subset {\mathfrak t}^c$, $V_i \cap V_j =0$ for all
$i \neq j$, and that $\dim V_i \,=\,1$ for all $i$.
Thus $\dim {\mathfrak t}^c \,\geq\, \dim \sum_{i=1}^m V_i \,= \,m$.
\end{proof}

\begin{lemma}\label{new-dim} Let $G$, $H$, ${\mathfrak h}$, $c$, $\Gamma$ be 
as in the Lemma \ref{dim}. Suppose
further that $\Gamma \,=\, {\mathbb Z}/2{\mathbb Z}$. 
\begin{enumerate}
\item Assume that ${\mathfrak h} \,=\, {\mathfrak a}_n$. Then there is 
toral subalgebra ${\mathfrak t}$ of
${\mathfrak h}$ such that ${\rm Ad} (c)(x)\,
 \,=\,x$ for all $x \,\in\, {\mathfrak t}$; moreover, $\dim {\mathfrak 
t} 
\,\geq\, n/2$ if $n$ is even
and $\dim {\mathfrak t} \,\geq\, (n+1)/2$ if $n$ is odd. 

\item Assume that ${\mathfrak h} \,=\, {\mathfrak b}_n$. Then there is toral 
subalgebra ${\mathfrak t}$ of ${\mathfrak h}$ such that ${\rm Ad} 
(c)(x)\, \,=\,x$ for all $x \,\in\, {\mathfrak t}$; moreover, $\dim 
{\mathfrak t}\,= \,n$.
\end{enumerate}
\end{lemma}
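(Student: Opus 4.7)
The plan is to apply Lemma \ref{dim} and reduce the assertion to a small case analysis of the $\mathbb{Z}/2\mathbb{Z}$-actions that can occur on the Dynkin diagrams of types $A_n$ and $B_n$. Concretely, I would first invoke Lemma \ref{dim} to obtain a Cartan subalgebra ${\mathfrak t}_0 \subset {\mathfrak h}$ that is ${\rm Ad}(c)$-stable, together with a simple system $\Delta \subset {\mathfrak t}_0^*$ preserved by the induced action of $c$. Setting ${\mathfrak t} := {\mathfrak t}_0^c$ (the pointwise fixed subspace of ${\rm Ad}(c)$) immediately produces a toral subalgebra of ${\mathfrak h}$ on which ${\rm Ad}(c)$ acts trivially, and Lemma \ref{dim}(2) gives $\dim {\mathfrak t} \geq m$, where $m$ is the number of $\langle c\rangle$-orbits in $\Delta$.

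Next I would observe that, since ${\rm Ad}(c)$ restricts to a Lie algebra automorphism of ${\mathfrak h}$ and permutes $\Delta$, this permutation preserves the Cartan integers and is therefore a symmetry of the Dynkin diagram of ${\mathfrak h}$ of order dividing $2$. For part (1), I would invoke the classification of such symmetries in type $A_n$: only the identity and, for $n \geq 2$, the involution $\alpha_i \leftrightarrow \alpha_{n+1-i}$. The trivial action gives $m = n$, which dominates both claimed bounds; the flip partitions $\Delta$ into $\lfloor n/2\rfloor$ unordered pairs, together with the singleton $\{\alpha_{(n+1)/2}\}$ when $n$ is odd, yielding $m = n/2$ for $n$ even and $m = (n+1)/2$ for $n$ odd, precisely as required. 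For part (2), the diagram $B_n$ admits no nontrivial automorphism, so $c$ must fix every simple root; hence it acts trivially on ${\mathfrak t}_0^*$ (as $\Delta$ is a basis) and therefore on ${\mathfrak t}_0$ itself. This forces ${\mathfrak t} = {\mathfrak t}_0$, of dimension equal to the rank $n$ of ${\mathfrak b}_n$.

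The only point that genuinely requires care is the justification that the induced permutation of $\Delta$ really is a diagram automorphism rather than an arbitrary permutation; this rests on the standard fact that any automorphism of a reduced root system preserving a simple system setwise must preserve the Cartan matrix. Once that is in place, the remainder is routine combinatorics on the diagrams $A_n$ and $B_n$, and the edge case $n = 1$ (where $A_1 = B_1$ has no nontrivial diagram symmetry and both claimed bounds reduce to $\dim {\mathfrak t} \geq 1$) is handled uniformly by the same argument.
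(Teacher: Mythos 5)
Your proof is correct and follows essentially the same route as the paper: apply Lemma \ref{dim} to get a $c$-stable Cartan subalgebra and simple system, note that the induced permutation of $\Delta$ is a Dynkin diagram automorphism, and count $\langle c\rangle$-orbits in types $A_n$ and $B_n$. The only (harmless) difference is that the paper splits into cases according to whether ${\rm Ad}(c)\vert_{\mathfrak h}$ is inner or outer via Lemma \ref{outer}, whereas you split directly on whether the diagram action is trivial or the flip; since the trivial case gives $m=n$, which dominates both bounds, your version slightly streamlines the argument without losing anything.
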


\begin{proof} We retain the notation used in the
statement and proof of Lemma \ref{dim}. 

We first prove (1). 
If ${\rm Ad} (c)\vert_{\mathfrak h} \,\in\, {\rm Ad}(H)$ then using 
Lemma \ref{outer}
we see that there is a Cartan subalgebra ${\mathfrak t}$ of ${\mathfrak 
h}$ which remains fixed by the adjoint action of $c$. 

Assume that the restriction ${\rm Ad} (c)\vert_{\mathfrak h} \,\notin \, 
{\rm Ad}(H)$. Now by part (1) of Lemma \ref{dim}
we consider a Cartan subalgebra ${\mathfrak t}$ of ${\mathfrak h}$ and 
a set of simple roots $\Delta$ which remain
invariant under the action of $c$. Appealing to Lemma \ref{outer} we 
see that the action of $c$ on $\Delta$ must be nontrivial. From
the Dynkin diagram of ${\mathfrak a}_n$ we see that 
if ${\mathfrak h} \,=\, {\mathfrak a}_n$, and if $n$ is even, then
the number of orbits in $\Delta$, under the nontrivial action of 
$\Gamma$, is $n/2$. Similarly, if ${\mathfrak h}\,=\, {\mathfrak a}_n$ 
and if $n$ is odd then the number of orbits in $\Delta$, under the 
nontrivial action of $\Gamma$, is $(n+1)/2$.
Now the proof follows from (2) of Lemma \ref{dim}.

We now prove (2).
Let now ${\mathfrak h} \,=\, {\mathfrak b}_n$. As before, by part (1) of 
Lemma \ref{dim}, 
we consider a Cartan subalgebra ${\mathfrak t}$ of ${\mathfrak h}$ and a set of simple roots $\Delta$ which remain
invariant under the action of $c$. But
there are no nontrivial graph-automorphism of the Dynkin diagram of 
${\mathfrak b}_n$; in particular, $\Gamma$-action fixes $\Delta$. Thus
$\Gamma$-action fixes ${\mathfrak t}$.
\end{proof}

\begin{lemma}\label{toral} Let $\Gamma$ be a nontrivial
finite abelian group acting on a complex torus $T$ by automorphisms.
Assume that all the maximal diagonalizable subgroups of the algebraic group
$\Gamma \ltimes T$ are either finite or coincide with $T$.
Then the fixed point set of $T$ under
the action of $\Gamma$ is finite.
\end{lemma}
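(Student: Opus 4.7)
The plan is to show that if $T^\Gamma$ were infinite, one could combine the resulting positive--dimensional fixed subtorus with $\Gamma$ to produce a diagonalizable subgroup of $\Gamma \ltimes T$ that is neither finite nor contained in $T$, contradicting the hypothesis on maximal diagonalizable subgroups.

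First I would set $T_0 \,:=\, (T^\Gamma)^0$, the identity component of the closed subgroup $T^\Gamma\, \subset\, T$. Since a connected closed subgroup of a torus is again a torus, $T_0$ is a subtorus of $T$, and it is enough to show that $T_0$ is trivial. Because $\Gamma$ acts trivially on $T^\Gamma$, and hence on $T_0$, the subgroup of $\Gamma \ltimes T$ generated by $\Gamma$ (viewed as $\Gamma \times \{e\} \,\subset\, \Gamma \ltimes T$) together with $T_0$ coincides with the direct product $\Gamma \times T_0$: the semidirect product multiplication collapses to the direct product on this subgroup because $\Gamma$ commutes with $T_0$.

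Next I would argue that $\Gamma \times T_0$ is a closed diagonalizable subgroup of the algebraic group $\Gamma \ltimes T$. Over $\c$ any finite abelian group embeds into $(\c^*)^r$ via its character group and is therefore diagonalizable, and a direct product of two diagonalizable algebraic groups is diagonalizable. Using that $\Gamma \ltimes T$ is a finite--dimensional algebraic group, a straightforward Zorn's lemma argument produces a maximal diagonalizable subgroup $D$ of $\Gamma \ltimes T$ with $\Gamma \times T_0 \,\subset\, D$.

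Now I would invoke the hypothesis on $\Gamma \ltimes T$: the subgroup $D$ must be either finite or equal to $T$. Suppose for contradiction that $T^\Gamma$ is infinite, so that $T_0$ is positive--dimensional. Then $D$ contains the positive--dimensional torus $T_0$, so $D$ is infinite, and the hypothesis forces $D \,=\, T$. But $D$ also contains the image $\Gamma \times \{e\}$ of $\Gamma$ in $\Gamma \ltimes T$, and $\Gamma \times \{e\}$ intersects $\{e\} \times T$ only in the identity element; since $\Gamma$ is nontrivial, this contradicts $D\,\subset\, T$. Hence $T_0$ is trivial and $T^\Gamma$ is finite. The main obstacle is verifying carefully that $\Gamma \times T_0$ is indeed a closed diagonalizable subgroup of $\Gamma \ltimes T$; once that is in place, the dichotomy in the hypothesis immediately rules out both alternatives when $T_0$ has positive dimension.
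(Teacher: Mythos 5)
Your proof is correct and follows essentially the same route as the paper: both pass to a positive--dimensional subtorus centralized (indeed fixed pointwise) by $\Gamma$, observe that $\Gamma$ together with that subtorus forms a diagonalizable subgroup $\Gamma \times T_0$ of $\Gamma \ltimes T$, and then rule out both alternatives of the hypothesis on maximal diagonalizable subgroups. The only immaterial difference is in the final contradiction, where the paper notes that $\Gamma$ cannot lie in a connected subgroup while you note that $\Gamma \times \{e\}$ meets $T$ only in the identity.
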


\begin{proof} 
We need to show that $\Gamma$ does not commute with a sub-torus of $T$ of positive dimension. To prove
by contradiction, suppose that 
$\Gamma$ centralizes a sub-torus $T^\prime \,\subset\, T$ of positive dimension. 

The direct product $\Gamma \times T^\prime$ is an abelian subgroup of
$\Gamma \ltimes T$, and it consists entirely of semisimple elements;
so $\Gamma \times T^\prime$ is diagonalizable.
As maximal diagonalizable subgroups of $\Gamma \ltimes T$ are either $T$ or finite, it follows that a
$\Gamma \ltimes T$-conjugate of 
$\Gamma \times T^\prime$ is contained in either $T$ or in some finite subgroup.
This leads to a contradiction, because $\Gamma$ does not lie in a connected subgroup of
$\Gamma \ltimes T$ and $T^\prime$ is not a finite group. Therefore,
$\Gamma$ does not commute with a sub-torus of $T$ of positive dimension.
\end{proof}

We now deal with the exceptional Lie algebras and 
henceforth, for notational convenience, by a nilpotent element of an
exceptional Lie algebra we will mean a nonzero nilpotent element.

A toral algebra of dimension $k$ is denoted by ${\mathfrak t}_k$
while a torus of dimension $k$ is denoted by $T_k$. 

Let $G$ be a simple group with Lie algebra ${\mathfrak g}$.
For a nilpotent element $X \in {\mathfrak g}$ and a ${\mathfrak s}{\mathfrak l}_2$-triple
$(X, E, Y)$ recall that $Z_{\mathfrak g} (X,E,Y)$ is
the Lie algebra of $Z_G (X,E,Y)$.
The component group $Z_G (X,E,Y)/ Z_G (X,E,Y)^0$ will be denoted by $\Gamma$.

Among the exceptional simple Lie algebras, the case of
${\mathfrak e}_6$ will be treated separately.

\begin{theorem}\label{gfe}
Let $\mathfrak g$ be one of ${\mathfrak g}_2$, ${\mathfrak f}_4$, ${\mathfrak e}_7$
and ${\mathfrak e}_8$. Let $G$ be a complex simple Lie group with Lie algebra $\mathfrak g$.
The orbit of a nilpotent element $X \,\in\, {\mathfrak g}$ for the adjoint action of
$G$ on $\mathfrak g$ will be denoted by ${\mathcal O}_X$. Then 
$$
H^2 ({\mathcal O}_X, \r)\,=\,0\, .
$$
\end{theorem}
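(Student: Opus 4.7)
The plan is to apply Theorem \ref{nilpotent}, which reduces the claim to showing that
$$
{\mathfrak c}^{Z_G (X,E,Y)/ Z_G (X,E,Y)^0}\,=\,0
$$
for every nilpotent $X \in \mathfrak{g}$, where $(X,E,Y)$ is an $\mathfrak{sl}_2$--triple and $\mathfrak{c}$ is the center of the Lie algebra of the reductive group $Z_G(X,E,Y)$. Throughout I would write $L := Z_G(X,E,Y)$, $\Gamma := L/L^0$, and $\mathfrak{l} := \mathrm{Lie}(L)$, so that $\mathfrak{c} = \mathfrak{z}(\mathfrak{l})$.

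The first step is to dispose of the generic case: if $L^0$ is semisimple then $\mathfrak{c} = 0$ and the desired vanishing is immediate. So one only needs to study those nilpotent orbits for which the reductive part of the centralizer has a nontrivial central torus. I would invoke the Bala--Carter classification tables for nilpotent orbits in the exceptional Lie algebras, together with the known descriptions of $L$ and $\Gamma$ for each orbit (as compiled, for example, in Chapter 8 of \cite{Co-M}). For $\mathfrak{g}_2$, $\mathfrak{f}_4$, $\mathfrak{e}_7$, $\mathfrak{e}_8$, these tables list finitely many orbits, and for each one reads off $\mathfrak{c}$ (as ${\mathfrak t}_k$ for some $k \geq 0$) and $\Gamma$ (one of $1$, $\mathbb{Z}/2\mathbb{Z}$, $\mathbb{Z}/3\mathbb{Z}$, $S_3$, $S_4$, $S_5$).

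For every orbit with $\mathfrak{c} \neq 0$, the task is then to verify ${\mathfrak c}^{\Gamma} = 0$. Here I would use Lemma \ref{toral} together with the structural Lemmas \ref{outer}, \ref{dim}, \ref{new-dim}: an element $c \in L$ representing a nontrivial class in $\Gamma$ is semisimple (one can always choose a semisimple representative), so it defines an automorphism of the torus $Z(L^0)$ whose Lie algebra is $\mathfrak{c}$. Lemma \ref{outer} controls when such a $c$ can be realized as an inner automorphism of $L^0$, and Lemmas \ref{dim} and \ref{new-dim} bound the dimension of fixed subspaces of tori under finite group actions; together they force, in each orbit where $\mathfrak{c} \neq 0$, the action of $\Gamma$ on the central torus of $L^0$ to be without fixed vectors. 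Invoking Lemma \ref{toral} then yields $\mathfrak{c}^\Gamma = 0$.

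The main obstacle is purely bookkeeping: there are many nilpotent orbits in $\mathfrak{e}_7$ and $\mathfrak{e}_8$, and one has to identify precisely those (relatively few) orbits in each of the four algebras for which $L^0$ has positive-dimensional center, and then verify case by case that the known component group acts on $\mathfrak{c}$ without nonzero invariants. The reason $\mathfrak{e}_6$ is excluded here --- and treated separately in Theorem \ref{e_6} --- is that in $\mathfrak{e}_6$ there exist nilpotent orbits whose reductive centralizer has nontrivial center but whose component group acts trivially on that center, producing nontrivial $H^2$; in $\mathfrak{g}_2$, $\mathfrak{f}_4$, $\mathfrak{e}_7$, $\mathfrak{e}_8$ the classification tables show that this obstruction never occurs, which is precisely what the case-by-case verification above is designed to confirm.
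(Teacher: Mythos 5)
Your plan follows the paper's proof essentially verbatim: reduce via Theorem \ref{nilpotent} to computing $\mathfrak{c}^{\Gamma}$, dismiss the orbits with semisimple reductive centralizer using the classification tables, and for the finitely many remaining orbits in $\mathfrak{e}_7$ and $\mathfrak{e}_8$ use Lemmas \ref{outer}, \ref{dim}, \ref{new-dim} (to bound below the dimension of the $\Gamma$-fixed toral subalgebra of the semisimple part and compare with the tabulated maximum) and Lemma \ref{toral} (for the purely toral centralizers) to force $\Gamma$ to act on $\mathfrak{t}_1$ or $\mathfrak{t}_2$ without nonzero fixed vectors. The remaining work is exactly the table bookkeeping you identify, and your explanation of why $\mathfrak{e}_6$ is excluded matches the paper.
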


\begin{proof}
We may assume that $G$ is of adjoint type.

{\bf Proof for ${\mathfrak g} = {\mathfrak g}_2$:}\,
In this case there are total $4$ nilpotent orbits (see Table 1 of \cite{A}).
Let $X \,\in\, {\mathfrak g}_2$ be a nilpotent element.
Let $(X,E,Y)$ be a ${\mathfrak s} {\mathfrak l}_2$-triple
in ${\mathfrak g}_2$ containing $X$. From Table 1, Column 4 of \cite{A} we
see that the Lie algebra $Z_{{\mathfrak g}_2} (X,E,Y)$ is either trivial 
or it is simple of type ${\mathfrak a}_1$.
Thus the center of $Z_{{\mathfrak g}_2} (X,E,Y)$ is of dimensional zero.
So, by Theorem \ref{nilpotent} we conclude that $H^2 ( {\mathcal O}_X, \r) =0$.

{\bf Proof for ${\mathfrak g} = {\mathfrak f}_4$:}\, In this case
there are total $15$ nilpotent orbits (see
Table 2 of \cite{A}).
Let $X \,\in\, {\mathfrak f}_4$ be a nilpotent element.
Let $(X,E,Y)$ be a ${\mathfrak s} {\mathfrak l}_2$-triple
in ${\mathfrak f}_4$ containing $X$. From Table 2, Column 4 of \cite{A} we 
see that the Lie algebra $Z_{{\mathfrak f}_4} (X,E,Y)$ is either trivial 
or one of the following:
$$
{\mathfrak c}_3\, , \, {\mathfrak a}_3\, ,\, {\mathfrak a}_1 \oplus 
{\mathfrak a}_1\, ,\,
{\mathfrak a}_2\, , \, {\mathfrak g}_2\, , \, {\mathfrak a}_1\, .
$$
Thus the center of $ Z_{{\mathfrak f}_4} (X,E,Y)$ is zero dimensional.
So, from Theorem \ref{nilpotent} it follows that $H^2 ( {\mathcal O}_X, 
\r) \,=\,0$.

{\bf Proof for ${\mathfrak g} ={\mathfrak e}_7$:}\,
In this case there are $44$ nilpotent orbits in ${\mathfrak e}_7$. Let 
$X \,\in\, {\mathfrak e}_7$ be a nilpotent element (see Table 4 of \cite{A}).
Let $(X,E,Y)$ be a ${\mathfrak s} {\mathfrak l}_2$-triple
in ${\mathfrak e}_7$ containing $X$. From Table 4, Column 4 in \cite{A} we 
see that out of $44$ distinct conjugacy classes of nilpotent
elements, $38$ nilpotent orbits ${\mathcal O}_X$ have the property that 
$Z_{{\mathfrak e}_7} (X,E,Y)$ is either trivial or one of the following:
$${\mathfrak d}_6 ,\, {\mathfrak b}_4 \oplus {\mathfrak a}_1,\, {\mathfrak c}_3 \oplus {\mathfrak a}_1,\,
{\mathfrak f}_4,\, {\mathfrak a}_5, \,{\mathfrak c}_3,\,
{\mathfrak a}_1\oplus {\mathfrak a}_1 \oplus {\mathfrak a}_1, \,{\mathfrak g}_2,\,
{\mathfrak g}_2 \oplus {\mathfrak a}_1,\, {\mathfrak b}_3 \oplus {\mathfrak a}_1,\, {\mathfrak b}_3, \,
{\mathfrak a}_1\oplus {\mathfrak a}_1,\, {\mathfrak a}_1, \,{\mathfrak b}_2\, .
$$
As all the above Lie algebras are semisimple, for these $38$ nilpotent 
conjugacy classes
${\mathcal O}_X$, the centers $Z_{{\mathfrak e}_7} (X,E,Y)$ are trivial. 
So, by Theorem \ref{nilpotent},
for these $38$ nilpotent conjugacy classes we have $H^2 ({\mathcal O}_X, \r)\,=\,0$.

We now consider the remaining $6$ nilpotent orbits ${\mathcal O}_X$ for which the center of the 
Lie algebra $Z_{{\mathfrak e}_7} (X,E,Y)$ is nontrivial. We list $Z_{{\mathfrak e}_7} (X,E,Y)$ for
these $6$ remaining nilpotent orbits:
$$
{\mathfrak a}_3 \oplus {\mathfrak t}_1\, ,\, {\mathfrak a}_1 \oplus {\mathfrak t}_1\, ,\,
{\mathfrak a}_2 \oplus {\mathfrak t}_1\, ,\, {\mathfrak t}_2\, ,\, {\mathfrak 
t}_1\, .
$$

In all these 6 cases we note certain common features. From
Table 4, Column 6, Rows 7, 19, 21, 24, 26, 38 of \cite{A} it 
follows that the exact sequence
$$
1 \longrightarrow Z_G (X,E,Y)^0 \longrightarrow Z_G (X,E,Y) \longrightarrow \Gamma \longrightarrow 1
$$
splits and consequently,
$$Z_G (X,E,Y) \,= \, \Gamma \ltimes Z_G (X,E,Y)^0\,. $$
In particular, $\Gamma$ can be identified with a subgroup of $Z_G (X,E,Y)$.
Moreover, from Table 4, Column 5, Rows 7, 19, 21, 24, 26, 38 of \cite{A}
we record that $\Gamma\,=\, {\mathbb Z}/2{\mathbb Z}$. The generator of 
$\Gamma$ will be denoted by $c$.

{\it The case of $Z_{{\mathfrak e}_7} (X,E,Y) = {\mathfrak a}_3 \oplus 
{\mathfrak t}_1$:}\, We refer to
Row 7 in Table 4 of \cite{A} for relevant facts on this case. Clearly, 
as ${\mathfrak a}_3$ is the semisimple part of ${\mathfrak a}_3 \oplus 
{\mathfrak t}_1$, the group
$\Gamma$ normalizes ${\mathfrak a}_3$. In other words, ${\rm Ad}(c) 
{\mathfrak a}_3\,=\,{\mathfrak a}_3$.
Now we use part (1) of Lemma \ref{new-dim} to see that ${\mathfrak a}_3$ 
admits a toral subalgebra, say ${\mathfrak t}$, 
with $\dim {\mathfrak t} = 2$ such that ${\rm Ad}(c) x = x$, for all $x 
\in {\mathfrak t}$.
But the maximum possible dimension of a toral subalgebra of ${\mathfrak a}_3 \oplus {\mathfrak t}_1$ 
centralized by $\Gamma$ is two (see Table 4, Column 3, Row 7 of \cite{A}). Consequently,
$\Gamma$ acts nontrivially on the one dimensional center ${\mathfrak t}_1$ of ${\mathfrak a}_3 \oplus {\mathfrak t}_1$.
Hence by Theorem \ref{nilpotent} $H^2 ({\mathcal O}_X, \r)\,=\,0$. 
 
{\it The case of $Z_{{\mathfrak e}_7} (X,E,Y) = {\mathfrak a}_1 \oplus {\mathfrak t}_1$:}\,
This appears two times in Row 19 and also in Row 26 of Table 4 in \cite{A}. We deal with both
the cases simultaneously.
As in the above case, we have that ${\rm Ad}(c) {\mathfrak a}_1\,=\, 
{\mathfrak a}_1$.
Using part (1) of Lemma \ref{new-dim} we conclude that there is toral 
subalgebra ${\mathfrak t} \subset {\mathfrak a}_1$, with 
$\dim {\mathfrak t} \,=\, 1$, such that ${\rm Ad}(c) x = x$, for all $x 
\in 
{\mathfrak t}$.
But from Table 4, Column 3, Rows 19, 26 of \cite{A} 
we see that the maximum possible dimension of a toral subalgebra of ${\mathfrak a}_3 
\oplus {\mathfrak t}_1$ which is
centralized by $\Gamma$ is one. Thus $\Gamma$ acts nontrivially on the one dimensional center ${\mathfrak t}_1$ of 
${\mathfrak a}_1 \oplus {\mathfrak t}_1$. Hence applying Theorem \ref{nilpotent}
we conclude that $H^2 ({\mathcal O}_X, \r)=0$.

{\it The case of $Z_{{\mathfrak e}_7} (X,E,Y) = {\mathfrak a}_2 \oplus 
{\mathfrak t}_1$:}\, See Row 21 of Table 4 in
\cite{A} for the required facts on this case. 
Just as above, we see that ${\rm Ad}(c) {\mathfrak a}_2 \,=\, {\mathfrak 
a}_2$. Appealing to part (1) of Lemma \ref{new-dim} we conclude that 
${\mathfrak a}_2$ admits a toral subalgebra ${\mathfrak t}$, 
with $\dim {\mathfrak t} \,=\, 1$, such that ${\rm Ad}(c) x \,=\, x$, 
for all $x \,\in \,{\mathfrak t}$.
 Recall the fact (see Table 4, Column 3, Row 21 in \cite{A}) that 
the maximum possible dimension of a toral subalgebra of ${\mathfrak a}_2 \oplus {\mathfrak t}_1$ 
centralized by $\Gamma$ is one. Thus $\Gamma$ must act nontrivially on the one dimensional 
center ${\mathfrak t}_1$ of ${\mathfrak a}_2 \oplus {\mathfrak t}_1$.
Therefore, $H^2 ({\mathcal O}_X, \r)\,=0\,$ by Theorem \ref{nilpotent}. 
 
{\it The case of $Z_{{\mathfrak e}_7} (X,E,Y) = {\mathfrak t}_2$:}\, For this case
we refer to Row 24 in Table 4 of \cite{A} for the required results.
Clearly, $Z_G (X,E,Y)^0 \,=\,T_2$, and $Z_G (X,E,Y) \,=\, \Gamma \ltimes 
T_2$. The maximal diagonalizable subgroups of $Z_G (X,E,Y)$ are isomorphic to 
either ${\mathbb Z}/ 4{\mathbb Z}$
or $T_2$; see Column 3, Row 21 in Table 4 of \cite{A}. From Lemma 
\ref{toral} it follows that 
$\Gamma$ has no nontrivial fixed points in the toral algebra ${\mathfrak t}_2$.
Hence, $H^2 ({\mathcal O}_X, \r)\,=\,0$ by Theorem \ref{nilpotent}.

{\it The case of $Z_{{\mathfrak e}_7} (X,E,Y) = {\mathfrak t}_1$:}\, We refer 
to Row 38 in Table 4 of \cite{A} for this case. 
Note that $Z_G (X,E,Y)^0 \,=\, T_1$, and $Z_G (X,E,Y) \,=\, \Gamma 
\ltimes T_1$. Using Column 3, Row 21 in Table 4 of \cite{A} we conclude 
that the maximal diagonalizable subgroups of $Z_G (X,E,Y)$
are of the form either ${\mathbb Z}/2{\mathbb Z}$
or $T_1$. Using Lemma \ref{toral} it follows that
$\Gamma$ has no nontrivial fixed points in the toral algebra ${\mathfrak t}_1$.
Hence, $H^2 ({\mathcal O}_X, \r)\,=\,0$ by Theorem \ref{nilpotent}.

{\bf Proof for ${\mathfrak g} ={\mathfrak e}_8$:}\,
There are $69$ nilpotent orbits in ${\mathfrak e}_8$ (see Table 5 in \cite{A}). 
Let $X\,\in\, {\mathfrak e}_8$ be a nilpotent element and
let $(X,E,Y)$ be a ${\mathfrak s} {\mathfrak l}_2$-triple
in ${\mathfrak e}_8$ containing $X$. From Column 4 of Table 5 in \cite{A} 
we see the following: sixty-two of the sixty-nine nilpotent 
orbits ${\mathcal O}_X$ have the property that 
$Z_{{\mathfrak e}_8} (X,E,Y)$ is either trivial or one of the following Lie 
algebras:
$${\mathfrak e}_7,\, {\mathfrak b}_6,\, {\mathfrak f}_4 \oplus {\mathfrak 
a}_1,\, {\mathfrak e}_6,\, {\mathfrak c}_4, \, {\mathfrak a}_5,\,
{\mathfrak b}_3 \oplus {\mathfrak a}_1,\, {\mathfrak b}_5,\, {\mathfrak g}_2 
\oplus {\mathfrak a}_1,\, {\mathfrak g}_2 \oplus {\mathfrak g}_2,\,
{\mathfrak d}_4, \,{\mathfrak b}_2,\,{\mathfrak f}_4, 
$$
$$
{\mathfrak b}_2 \oplus {\mathfrak a}_1,\,
{\mathfrak a}_1 \oplus {\mathfrak a}_1 \oplus {\mathfrak a}_1, \,{\mathfrak 
a}_4,{\mathfrak a}_1 \oplus {\mathfrak a}_1,\, {\mathfrak a}_2,\,
{\mathfrak c}_3,{\mathfrak a}_3, \,{\mathfrak a}_1, \,{\mathfrak g}_2,\, 
{\mathfrak b}_3, \, {\mathfrak b}_2\, .
$$
As all the above Lie algebras are semisimple, these sixty-two nilpotent 
orbits ${\mathcal O}_X$ have the property that the center of the 
Lie algebra $Z_{{\mathfrak e}_8} (X,E,Y)$ is trivial. So,
$H^2 ({\mathcal O}_X, \r)=0$ in these cases (see Theorem 
\ref{nilpotent}).

We now consider the remaining $7$ nilpotent orbits ${\mathcal O}_X$ for which the centers of the 
Lie algebras $Z_{{\mathfrak e}_8} (X,E,Y)$ are nontrivial. The Lie 
algebras
$Z_{{\mathfrak e}_8} (X,E,Y)$ for these seven nilpotent orbits are as follows:
$$
{\mathfrak b}_2 \oplus {\mathfrak t}_1, \, {\mathfrak a}_2 \oplus {\mathfrak 
t}_1,{\mathfrak a}_1 \oplus {\mathfrak t}_1,\, {\mathfrak t}_1\, .
$$

We observe a few common features in the $7$ nilpotent orbits which will 
turn out to be useful. From Column 6, Rows 18, 24, 26,
46, 49, 52, 55 of Table 5 in \cite{A} it follows that the exact sequence
$$
1 \,\longrightarrow \,Z_G (X,E,Y)^0 \,\longrightarrow \,Z_G (X,E,Y) 
\, \longrightarrow\, \Gamma\, \longrightarrow \, 1
$$
splits and consequently,
$$Z_G (X,E,Y) \,\simeq \, \Gamma \ltimes Z_G (X,E,Y)^0\,. $$
In particular, $\Gamma$ can be identified with a subgroup of $Z_G (X,E,Y)$.
Moreover, from Column 5, Rows 18, 24, 26, 46, 49, 52, 55 of Table 5 in \cite{A}
we note that $\Gamma\,\simeq\, {\mathbb Z}/2{\mathbb Z}$. As in the 
previous 
situation, the generator of $\Gamma$ is denoted by $c$.

{\it The case when $Z_{{\mathfrak e}_8} (X,E,Y) = {\mathfrak b}_2 \oplus {\mathfrak t}_1$:} 
This case occurs in Row 18 of Table 5 in \cite{A}. Clearly, as 
${\mathfrak b}_2$ is the semisimple part of ${\mathfrak b}_2 \oplus 
{\mathfrak t}_1$, the group
$\Gamma$ normalizes ${\mathfrak b}_2$. In other words, ${\rm Ad}(c) 
{\mathfrak b}_2\,=\,{\mathfrak b}_2$.
Using part (2) of Lemma \ref{new-dim} we see that ${\mathfrak b}_2$ 
admits a toral subalgebra ${\mathfrak t}$,
with $\dim {\mathfrak t}\,=\, 2$, such that ${\rm Ad}(c) x \,=\, x$, for 
all $x \,\in \,{\mathfrak t}$.
But the maximum possible dimension of a toral subalgebra of ${\mathfrak b}_2 \oplus {\mathfrak t}_1$ 
centralized by $\Gamma$ is two (see Column 3, Row 18 of Table 5 in 
\cite{A}). Thus,
$\Gamma$ acts nontrivially on the one dimensional center ${\mathfrak t}_1$ of ${\mathfrak a}_3 \oplus {\mathfrak t}_1$.
Appealing to Theorem \ref{nilpotent} we conclude that $H^2 ({\mathcal 
O}_X, \, \r)\,=\,0$. 
 
{\it The case when $Z_{{\mathfrak e}_8} (X,E,Y) = {\mathfrak a}_2 \oplus 
{\mathfrak t}_1$:} 
We refer to Row 24 of Table 5 in \cite{A} for necessary results.
It follows, exactly in the same way, as in the case when $Z_{{\mathfrak 
e}_7} (X,E,Y) = {\mathfrak a}_2 \oplus {\mathfrak t}_1$,
that $\Gamma$ must act on the center ${\mathfrak t}_1$ of ${\mathfrak a}_2 \oplus {\mathfrak t}_1$ nontrivially.
We omit the details to avoid repetitive arguments. 
Hence, by Theorem \ref{nilpotent}, $H^2 ({\mathcal O}_X, \r)=0$. 

{\it The case when $Z_{{\mathfrak e}_8} (X,E,Y) = {\mathfrak a}_1 \oplus {\mathfrak t}_1$:} This case occurs in two
rows, namely in Rows 19 and 26 of Table 4 in \cite{A}.
As in the case above, it follows exactly in the same way as in the case when $Z_{{\mathfrak e}_7} (X,E,Y) = {\mathfrak a}_1 \oplus {\mathfrak t}_1$,
that $\Gamma$ must act on the center ${\mathfrak t}_1$ of ${\mathfrak a}_1 \oplus {\mathfrak t}_1$ nontrivially. 
Using Theorem \ref{nilpotent} we see that $H^2 ({\mathcal O}_X, \r)=0$. 

{\it The case when $Z_{{\mathfrak e}_8} (X,E,Y) = {\mathfrak t}_1$:} 
This appears four times in Table 4 of \cite{A}, namely in Row 46, Row 
49, Row 52 and in Row 55. We deal all the cases simultaneously.
In all the cases the possible maximal diagonalizable subgroup of $Z_G (X,E,Y)$ is isomorphic to either $T_1$
or a finite abelian group. So by Lemma \ref{toral} it follows that 
$\Gamma$ must act on $Z_{{\mathfrak e}_8} (X,E,Y)={\mathfrak t}_1$ nontrivially. Hence 
by Theorem \ref{nilpotent}, $H^2 ({\mathcal O}_X, \r)=0$. 
Thus the proof of Theorem \ref{gfe} is completed.
\end{proof}

\begin{theorem}\label{e_6} Let $G$ be a complex simple group of exceptional type
with Lie algebra ${\mathfrak e}_6$. Let $X \in {\mathfrak e}_6$ be a nilpotent element.
Then $H^2 ({\mathcal O}_X, \r)\,=\,0$ for all but nine nilpotent orbits, 
where ${\mathcal O}_X$ is the orbit of $X$ in ${\mathfrak e}_6$. For
the remaining nine nilpotent orbits,
$$\dim H^2 ({\mathcal O}_X, \r) \,=\,1\, .$$ 
\end{theorem}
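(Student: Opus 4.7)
The strategy is identical to that used in the proof of Theorem \ref{gfe}. The starting point is Theorem \ref{nilpotent}, which identifies $H^2({\mathcal O}_X, \r) \otimes_\r \c$ with the $\Gamma$--invariant part of the center ${\mathfrak c}$ of $Z_{{\mathfrak e}_6}(X,E,Y)$, where $\Gamma := Z_G(X,E,Y)/Z_G(X,E,Y)^0$. After replacing $G$ by its adjoint group (the adjoint orbits are unchanged), I would enumerate all nilpotent orbits in ${\mathfrak e}_6$ from Table 3 of \cite{A}, reading off for each orbit the Lie algebra $Z_{{\mathfrak e}_6}(X,E,Y)$, its center ${\mathfrak c}$, and the component group $\Gamma$.

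First I would dispose of every orbit for which the centralizer $Z_{{\mathfrak e}_6}(X,E,Y)$ is semisimple (i.e., appears in the table with no toral summand); for all such orbits ${\mathfrak c} = 0$ and Theorem \ref{nilpotent} gives $H^2({\mathcal O}_X, \r) = 0$ at once. This handles the bulk of the list. For each of the remaining orbits (those with a nontrivial toral summand in $Z_{{\mathfrak e}_6}(X,E,Y)$) I would split into two sub-cases depending on $\Gamma$. When $\Gamma$ is trivial, one has ${\mathfrak c}^\Gamma = {\mathfrak c}$, and the dimension of $H^2$ is read directly from the table as $\dim {\mathfrak c}$. When $\Gamma$ is nontrivial I would argue as in the ${\mathfrak e}_7, {\mathfrak e}_8$ cases in the proof of Theorem \ref{gfe}: combine Lemma \ref{outer}, Lemma \ref{dim}, Lemma \ref{new-dim} and Lemma \ref{toral} with the data in Column 3 of Table 3 of \cite{A} (which records the maximum dimension of a toral subalgebra of $Z_{{\mathfrak e}_6}(X,E,Y)$ centralized by $\Gamma$) in order to force $\Gamma$ to act nontrivially on ${\mathfrak c}$, whence ${\mathfrak c}^\Gamma = 0$ and $H^2 = 0$.

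The main obstacle is the bookkeeping in the finite collection of orbits where both ${\mathfrak c}$ and $\Gamma$ are nontrivial. For each such orbit one must determine whether the $\Gamma$--action on ${\mathfrak c}$ is trivial or not, and this depends on how the semisimple part of $Z_{{\mathfrak e}_6}(X,E,Y)$ sits inside ${\mathfrak e}_6$ and on the way $\Gamma$ permutes simple factors; in the ${\mathfrak e}_6$ setting, unlike the ${\mathfrak e}_7$ and ${\mathfrak e}_8$ cases handled in Theorem \ref{gfe}, many relevant orbits have trivial $\Gamma$, which is precisely what allows a nontrivial second cohomology to survive. A careful case check, following the template already set up in the proof of Theorem \ref{gfe}, should yield that exactly nine orbits produce a one-dimensional invariant space ${\mathfrak c}^\Gamma$, giving $\dim H^2({\mathcal O}_X, \r) = 1$, and that all other orbits with ${\mathfrak c} \neq 0$ have ${\mathfrak c}^\Gamma = 0$, giving $H^2({\mathcal O}_X, \r) = 0$. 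Combined with the earlier semisimple cases, this establishes the theorem.
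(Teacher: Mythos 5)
Your proposal follows the paper's argument exactly: reduce to the adjoint group, apply Theorem \ref{nilpotent}, discard the orbits with semisimple $Z_{{\mathfrak e}_6}(X,E,Y)$, and then observe that among the orbits with a toral summand the component group $\Gamma$ is trivial except for the single ${\mathfrak t}_2$ case (where $\Gamma\simeq S_3$ is shown via Lemma \ref{toral} to have no nonzero fixed vectors), which is precisely how the paper arrives at the count of nine. The only part you leave implicit --- reading off from Table 3 of \cite{A} that the six centralizers ${\mathfrak b}_3\oplus{\mathfrak t}_1$, ${\mathfrak a}_2\oplus{\mathfrak t}_1$, ${\mathfrak a}_1\oplus{\mathfrak t}_1$, ${\mathfrak b}_2\oplus{\mathfrak t}_1$ and the three occurrences of ${\mathfrak t}_1$ all have trivial $\Gamma$, giving the nine orbits --- is exactly the bookkeeping the paper performs, so the approaches coincide.
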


\begin{proof} We may assume that $G$ is of adjoint type. From Table 3 in
\cite{A} we see that there are $20$ nilpotent
orbits in ${\mathfrak e}_6$. Let $X \in {\mathfrak e}_6$ be a nilpotent element.
Let $(X,E,Y)$ be a ${\mathfrak s} {\mathfrak l}_2$-triple
in ${\mathfrak e}_6$ containing $X$. From Column 4 of Table 3 in \cite{A} we 
see that, out of $20$ distinct conjugacy classes of nilpotent
elements, $10$ nilpotent orbits ${\mathcal O}_X$ have the property that 
$Z_{{\mathfrak e}_6} (X,E,Y)$ is either trivial or one of the following Lie algebras:
$$
{\mathfrak a}_5, \, {\mathfrak a}_2 \oplus {\mathfrak a}_1, \,{\mathfrak a}_2 
\oplus {\mathfrak a}_2,\, {\mathfrak g}_2,\, {\mathfrak a}_1,\, {\mathfrak 
a}_2\, . 
$$
As all the above Lie algebras are semisimple, $H^2 ({\mathcal O}_X, 
\r)\,=\,0$ for these ten nilpotent orbits (see Theorem
\ref{nilpotent}).

We now consider the remaining $10$ nilpotent orbits ${\mathcal O}_X$ for which the centers of the 
Lie algebras $Z_{{\mathfrak e}_6} (X,E,Y)$ are nontrivial. We list all the possible description of $Z_{{\mathfrak e}_6} (X,E,Y)$
for these $10$ nilpotent orbits ${\mathcal O}_X$: 
$$
{\mathfrak b}_3 \oplus {\mathfrak t}_1, \, {\mathfrak a}_2 \oplus {\mathfrak 
t}_1, \, {\mathfrak a}_1 \oplus {\mathfrak t}_1,\, {\mathfrak b}_2
\oplus {\mathfrak t}_1, \,{\mathfrak t}_2,\, {\mathfrak t}_1\, .
$$

{\it The cases where $Z_{{\mathfrak e}_8} (X,E,Y) \,=\, {\mathfrak b}_3 \oplus 
{\mathfrak t}_1$ or
${\mathfrak a}_2 \oplus {\mathfrak t}_1$ or ${\mathfrak a}_1 \oplus 
{\mathfrak t}_1$ or ${\mathfrak b}_2 \oplus {\mathfrak t}_1$:}
These cases occur in Rows 2, 5, 7, 8, 10 and 12 of Table 3 in \cite{A}. 
We will deal with all these simultaneously. We note from
Column 5 of Table 3 in \cite{A} that $Z_G (X,E,Y)/Z_G 
(X,E,Y)^0$ is trivial for all these cases. Using Theorem \ref{nilpotent}
we conclude that $\dim H^2 ({\mathcal O}_X, \r)\,=\,1$.

{\it The case of $Z_{{\mathfrak e}_8} (X,E,Y) \,=\, {\mathfrak t}_1$:}
These cases occur in Rows 14, 15 and 18 of Table 3 in \cite{A}. From 
Column 5 of Table 3 in \cite{A} it follows that $Z_G 
(X,E,Y)/Z_G (X,E,Y)^0$ is trivial for all these cases.
Hence, $\dim H^2 ({\mathcal O}_X, \r)\,=\,1$ by Theorem \ref{nilpotent}. 

{\it The case of $Z_{{\mathfrak e}_8} (X,E,Y) = {\mathfrak t}_2$:}
{}From Column 5, Row 11 of Table 3 in \cite{A} we see 
that $\Gamma\,\simeq
\,S_3$, the symmetric group on three symbols. Moreover, $Z_G (X,E,Y)^0\,
\simeq\, T_2$. From Column 6, Row 21 of Table 4 in \cite{A} we see that 
$$
Z_G (X,E,Y) 
\,\simeq\, \Gamma \ltimes Z_G (X,E,Y)^0
\,\simeq\, \Gamma \ltimes T_2\, .
$$
Further, by Column 3, Row 11 of Table 4 in \cite{A} we see that 
the maximal diagonalizable subgroups of $Z_G (X,E,Y) \simeq \Gamma \ltimes T_2$ are of the form either ${\mathbb Z}/ 3{\mathbb Z}$ or
${\mathbb Z}/2{\mathbb Z} \times T_1$ or $T_2$. 
We claim that the set of fixed points of $T_2$ under the action of $\Gamma \simeq S_3$ has dimension zero.
If not, then there is a one dimensional sub-torus $T_1 \subset T_2$ or all of $T_2$ remain fixed by $\Gamma$. 
Consider a copy of ${\mathbb Z}/ 3{\mathbb Z}$ in $S_3$. Then either ${\mathbb Z}/ 3{\mathbb Z} \times T_1$ or
${\mathbb Z}/ 3{\mathbb Z} \times T_2$ will be a maximal
diagonalizable subgroup of $Z_G (X,E,Y) \simeq \Gamma \ltimes T_2$. This is a contradiction.
We now appeal to Theorem \ref{nilpotent} to conclude that $\dim H^2 
({\mathcal O}_X, \r)=0$. This completes the proof of the theorem.
\end{proof}

\begin{remark}{\rm In the proof of Theorem \ref{gfe} and Theorem \ref{e_6},
we used tables in \cite{A} to determine 
the dimension of a maximal toral subalgebra in $Z_{\mathfrak g} 
(X,E,Y)$ fixed by $\Gamma$ (or by cyclic subgroups when $\Gamma$ is not
cyclic). This can also be done using \cite{So}, from which it follows 
that this dimension coincides with the corank of certain pseudo--Levi 
subalgebra associated to the cyclic subgroups of $\Gamma$.}
\end{remark}

\section*{Acknowledgements}
We are very grateful to Eric Sommers for a very helpful correspondence.
We thank the referee for simplifying the proof of Claim 1 in the proof of
Proposition \ref{iff-orbit-2}. The first-named author thanks the Institute
of Mathematical Sciences at Chennai for its hospitality.

\end{document}